

\documentclass[11pt]{article}

\usepackage{epsfig,graphics}
\usepackage{amssymb,amsmath,amsthm,bm}
\usepackage{booktabs}  
\usepackage{threeparttable}  
\usepackage{multirow}

\topmargin0in \oddsidemargin0in \evensidemargin0in
\headheight0in\headsep0in \topskip0in

\textwidth6.4in
\textheight8.7in

\newtheorem{theo}{Theorem}
\newtheorem{lem}{Lemma}
\newtheorem{pro}{Proposition}

\newtheorem{mrem}{Remark}
\newtheorem{mdef}{Definition}
\newtheorem{example}{Example}
\newtheorem{alg}{Algorithm}


\def\RR{{\mathbb R}}

\def\R{{\mathbb R}}

\def\fr{\mathfrak{R}}


\def\gga{\gamma}

\def\gb{\beta}

\def\gs{\sigma}
\def\gl{\lambda}
\def\wt{\widetilde}

\def\gUp{\Upsilon}

\def\ep{\epsilon}
\def\vep{\varepsilon}

\def\gt{\triangle}

\def\gp{{\prime}}
\def\b0{{\bf 0}}
\def\1{{\bf 1}}

\def\cE{\mathcal{E}}

\def\cH{\mathcal H}


\def\Err{{\rm Err}}

\def\Bd{{\rm Bd}}

\def\wh{\widehat}
\def\wt{\widetilde}

\makeatletter
\def\widebreve{\mathpalette\wide@breve}
\def\wide@breve#1#2{\sbox\z@{$#1#2$}%
     \mathop{\vbox{\m@th\ialign{##\crcr
\kern0.08em\brevefill#1{0.8\wd\z@}\crcr\noalign{\nointerlineskip}%
                    $\hss#1#2\hss$\crcr}}}\limits}
\def\brevefill#1#2{$\m@th\sbox\tw@{$#1($}%
  \hss\resizebox{#2}{\wd\tw@}{\rotatebox[origin=c]{90}{\upshape(}}\hss$}
\makeatletter
\def\wb{\widebreve}

\pagestyle{plain}

\begin{document}

\title{Time-Scale-Chirp\_rate Operator for 
Recovery of \\ Non-stationary Signal Components with \\
Crossover Instantaneous Frequency Curves\thanks{This work was partially supported by the Hong Kong Research Council, under Projects $\sharp$ 12300917 and $\sharp$ 12303218, and HKBU Grants $\sharp$ RC-ICRS/16-17/03 and $\sharp$ RC-FNRA-IG/18-19/SCI/01, 
the Simons Foundation, under grant $\sharp$ 353185, 
and the National Natural Science Foundation of China, 
under Grants $\sharp$ 62071349, $\sharp$ 61972265  and  $\sharp$ 11871348, by National Natural Science Foundation of Guangdong Province of China,  under Grant $\sharp$ 2020B1515310008.
}
\author{Charles K. Chui${}^{1}$, 
 Qingtang Jiang${}^{2}$, Lin Li${}^{3}$, 
 and Jian Lu${}^{4}$
 }}
\date{}

\maketitle

\vskip -0.5cm

{1. Department of Mathematics, Hong Kong Baptist University, Hong Kong.   E-mail: ckchui@stanford.edu}

{2. Department of Mathematics and Statistics, Univ. of Missouri--St. Louis, St. Louis, MO. 

 \qquad  E-mail: jiangq@umsl.edu}

{3. School of Electronic Engineering, Xidian University, Xi\rq{}an, China.  E-mail: lilin@xidian.edu.cn

{4. College of Mathematics and Statistics, Shenzhen University, China. E-mail: jianlu@szu.edu.cn}

\bigskip 




\begin{abstract}
The objective of this paper is to introduce an innovative approach for the recovery of non-stationary signal components with possibly cross-over instantaneous frequency (IF) curves from a multi-component blind-source signal. The main idea is to incorporate a chirp rate parameter with the time-scale continuous wavelet-like transformation, by considering the quadratic phase representation of the signal components. Hence-forth, even if two IF curves cross, the two corresponding signal components can still be separated and recovered, provided that their chirp rates are different. In other words, signal components with the same IF value at any time instant could still be recovered. To facilitate our presentation, we introduce the notion of time-scale-chirp\_rate  (TSC-R) recovery transform or TSC-R recovery operator 
to develop a TSC-R theory for the  3-dimensional space of  time, scale, chirp rate. Our theoretical development is based on the approximation of the non-stationary signal components with linear chirps and applying the proposed adaptive TSC-R transform to the multi-component blind-source signal to obtain fairly accurate error bounds of IF estimations and signal components recovery. Several numerical experimental results are presented to demonstrate the out-performance of the proposed method over all existing time-frequency and time-scale approaches in the published literature, particularly for non-stationary source signals with crossover IFs. 
\end{abstract}

Keywords: {\it $3D$  time-scale-chirp\_rate space; adaptive quadratic-phase integral transform; multi-component signals with cross-over instantaneous frequency curves; recovery of signal components or sub-signals and their instantaneous frequencies; mode retrieval.} 


\maketitle

\section{Introduction}

In nature and the current highly technological era, acquired signals are usually affected by various complicated factors and appear as multi-component (time-overlapping) modes in the form of  the adaptive harmonic model (AHM) with an additive trend function, namely:
\begin{equation}
\label{AHM0}
x(t)=A_0(t)+\sum_{k=1}^K x_k(t),  \quad x_k(t)=A_k(t) \cos \big(2\pi \phi_k(t)\big), 
\end{equation}
where $A_0(t)$ represents the trend, $A_1(t), \cdots,  A_K (t) \ge 0$ the instantaneous amplitudes (IAs), and $2\pi \phi_1(t), \cdots, 2\pi \phi_K (t)$ the instantaneous phases (IPhs), of the multi-component source signal (or composite signal) $x(t)$. The trend along with the instantaneous frequencies (IFs) $\phi^\gp_k(t)$ of $x(t)$ in  \eqref{AHM0} are often used to describe the underlying dynamics of $x(t)$. Here, the IF of the unknown component $x_k(t)$ is defined by the derivative  $\phi^\gp_k(t)$ of $1/2\pi$ multiple of the phase function. For example, radar echoes may be generated by multiple targets close to each other, or by different micro-motion parts in one target. Also, seismic signals usually consist of multiple modes that change in time with the dynamic variations of the IAs $A_k(t)$ and the  IPhs  $2\pi \phi_k(t)$, aroused by the adjacent thin layers. In many situations it is necessary to separate the multi-component source signal $x(t)$ into a finite number of mono-components $x_k(t) = A_k(t) \cos \big(2\pi \phi_k(t)\big)$ to recover the modes and underlying dynamics, implicated for the purpose of source signal processing, parameter estimation, feature extraction, and pattern recognition, etc. Unfortunately, there are very few effective rigorous methods available in the published literature for extracting or recovery of such signal components or sub-signals. 

In this regard, it is important to point out that in general, the signal decomposition approach is not suitable for resolving the inverse problem of extracting the signal components $x_1(t), \cdots, x_K(t)$ and trend $A_0(t)$ from the source data $x(t)$ in \eqref{AHM0}. In particular, although function decomposition methods in the mathematics literature are abundant, the general objective of such approach is to decompose a given function in a certain function class into its building blocks, which are not of the form of the signal components in \eqref{AHM0}. For example, in the pioneering paper \cite{RR_Coifman} by R. Coifman, the function building blocks (called atoms) do not have the phase and frequency contents as $x_k(t)=A_k(t) \cos \big(2\pi \phi_k(t)\big)$. In another pioneering paper \cite{Chen_Donoho_Saunders} by S. Chen, D. Donoho, and M. Saunders, a desired library of function building blocks is compiled to apply an innovative basis pursuit algorithm for atomic decomposition.  However, it is not feasible to compile a huge library of atoms of the form $A_k(t) \cos \big(2\pi \phi_k(t)\big)$ for arbitrary IAs and IPhs, to apply the basis pursuit algorithm for resolving the inverse problem of recovering the number $K$ of components $x_1(t), \cdots, x_K(t)$ in \eqref{AHM0}, from the source signal $x(t)$. Of course there are other well-known signal decomposition schemes, such as the discrete wavelet decomposition and sub-band coding, for signal decomposition, but they are data-independent computational schemes and definitely cannot be applied to solving this inverse problem. Even the most popular data-dependent signal (or time series) decomposition algorithm, called ``Emperical Mode Decomposition (EMD)'', proposed by N. Huang et al, as well as all variants developed by others, such as  \cite{Cicone20, HM_Zhou16, HM_Zhou20, Flandrin04, LCJJ19, HM_Zhou09, Rilling08, Walt, Walt2, 
Y_Wang12, Wu_Huang09, Xu06, Li_Ji09}, fail in resolving this inverse problem. The reason is that there is absolutely no reason for the EMD decomposed components, called intrinsic mode functions (IMFs), to possess any phase and frequency information. After all, the manipulation of applying the Hilbert transform to analytically extend each  IMF from the real line to the upper half plane, followed by taking the real part of the polar formulation of the extension to obtain the instantaneous phase representation, can also be applied to any arbitrary integrable function. In fact, the derivative of this artificial instantaneous phase function is not necessarily positive (for the formulation of the instantaneous frequency), even if the derivative exists. 

On the other hand, it would be much more reasonable to first extract the (instantaneous) frequencies, and then using the frequency contents to “decompose” the source signal into its components. Let us call this procedure the ``signal resolution''  approach. In other words, the signal resolution approach is a logical way to resolve the inverse problem using the data information from the source signal. For stationary signals (that is, source signals with linear-phase components), the signal resolution approach has a very long history, dated back to B.G.R. De Prony, who introduced the Prony method in his 1795 paper \cite{De Prony} to solving the inverse problem of time-invariant linear systems with constant coefficients. This pioneering paper stimulates the development of two very important and popular algorithms, called ``MUSIC\rq\rq{} proposed by R.O. Schmidt  in \cite{MUSIC} and ``ESPRIT\rq\rq{} introduced by R. Roy and Kailath in \cite{ESPRIT}. While the number $K$ of signal components of the stationary model \eqref{AHM0} with linear phases and constant coefficients is needed for carrying out the Prony method, it is not necessary for both MUSIC and ESPRIT, even with non-constant coefficients in \eqref{AHM0}.  

The first signal resolution approach for non-stationary signals, coined ``synchrosqueezed transform (SST)\rq\rq{}  by I. Daubechies and S. Maes in \cite{Daub_Maes96} and studied by H.-T. Wu in his Ph.D. dissertation \cite{Wu_thesis}, where both the continuous wavelet transform (CWT) and the short-time Fourier transform (STFT) are considered to compute some reference frequency from the source signal for the SST operation to squeeze out the instantaneous frequencies (IFs) of the signal components. The full developments of SST using CWT and STFT are published in \cite{Daub_Lu_Wu11} and \cite{Thakur_Wu11}, respectively. One of the limitations of the SST approach is the need of sufficiently accurate IFs for applying the normalized integral of the SST output in a small neighborhood of each IF to recover the signal components, but without assurance of the number $K$ of such IFs or signal components in \eqref{AHM0}. Further development and study in the area of SST and its applications include the more recent publications \cite{ Flandrin_Wu_etal_review13,Chui_Lin_Wu15, Chui_Walt15, Daub_Wang_Wu15, Yang15, Yang14, MOM14, OM17, BMO18, Wu17, Saito17, LCHJJ18, LCJ18, CJLS18, LJL20, Li_Liang12, Wang_etal14,Jiang_Suter17, WCSGTZ18}. 
More recently, another time-frequency approach, coined ``signal separation operation or operator (SSO)\rq\rq{} by the first author and H.N. Mhaskar in the joint work \cite{Chui_Mhaskar15}  for resolving the inverse problem \eqref{AHM0} by using discrete data acquired from the source multi-component signal. In contrast to SST, the SSO is a direct method for recovering the signal components simply by plugging the computed IF values in the same SSO (operator). Further development in the direction of SSO includes \cite{Chui_Mhaskar_Walt, LCJ20, CJLL20_adpSTFT, Chui_Han_Mhaskar, Chui_Mhaskar}. In the literature, both SST and SSO are commonly called ``time- frequency\rq\rq{} approaches. Another consideration of the signal resolution approach is the ``time-scale\rq\rq{} approach by using the CWT and recalling that the scale parameter of the CWT is inversely proportional to the frequency to be estimated by the CWT. Of course the constants of inverse proportionality depend on the choice of the analysis wavelets for the CWT. In very recent paper \cite{Chui_Han20}, the classical Haar function is extended to a family of cardinal splines, called extended Haar wavelets $\psi_{m, n}(x)$, with any desirable polynomial spline order $m\ge1$, any desirable order $n\ge1$ of vanishing moments, and compact supports $[-(m+n)2, (m+n)/2]$, for which the constants of inverse proportionality are easily computed (see Equation (2.2) and Tables 3--5 in \cite{Chui_Han20}). One advantage of the time-scale approach proposed in \cite{Chui_Han20} over the SSO is the elimination of the additional parameter for estimating the IFs of the signal components.  

Observe that in applying 
SSO and the time-scale approach in \cite{Chui_Han20},  the phase functions of the signal components in the source signal model \eqref{AHM0} are approximated by some linear polynomials at any local time for the purpose of extracting the IFs. More recently, 
quadratic approximation at local time gives rise to the SSO of ``linear chirp-based model\rq{}\rq{} proposed in our paper  \cite{LCJ20}. This model provides a more accurate component recovery formulae, with theoretical analysis established in our recent work \cite{CJLL20_adpSTFT}.  
The main reason for considering the quadratic terms of the phase approximation is to recover signal components with the same IF values. In this regard, we emphasize that in the current literature, including all time-frequency and time-scale approaches, the IFs of the signal components are assumed to be distinct and well separated. This strict assumption must be removed in order to apply the methods and algorithms to separate more general real-world multi-component or composite signals. To demonstrate this point of view, let us consider radar signal processing, where the micro-Doppler effects are represented by highly non-stationary signals. When the target or any structure on the target undergoes micro-motion dynamics, such as mechanical vibrations, rotations, or tumbling and coning motions \cite{radar_basic_2016, Stankovic_compressive_sense_2013}, the frequency curves of the signal components may cross one another. For example, Fig.\ref{figure:Micro_Doppler} shows the simulated micro-Doppler modulations (that is, two sinusoidal frequency-modulation signals and one single-tone signal) and the STFT of the synthetic signal. 

\begin{figure}[th]
	\centering
	\begin{tabular}{cc}
		\resizebox {2.4in}{1.8in} {\includegraphics{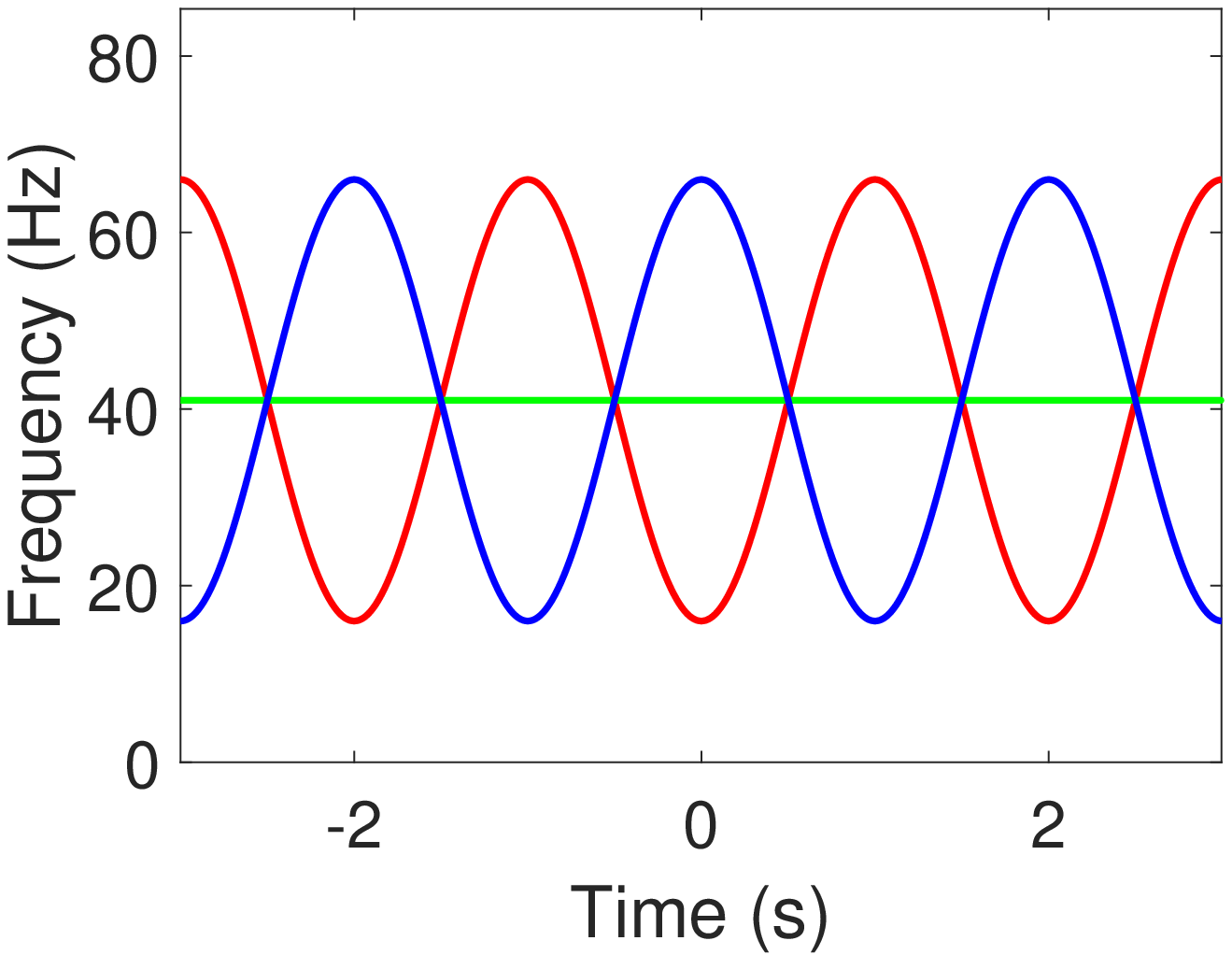}}
		\quad & \quad 
		\resizebox {2.4in}{1.8in} {\includegraphics{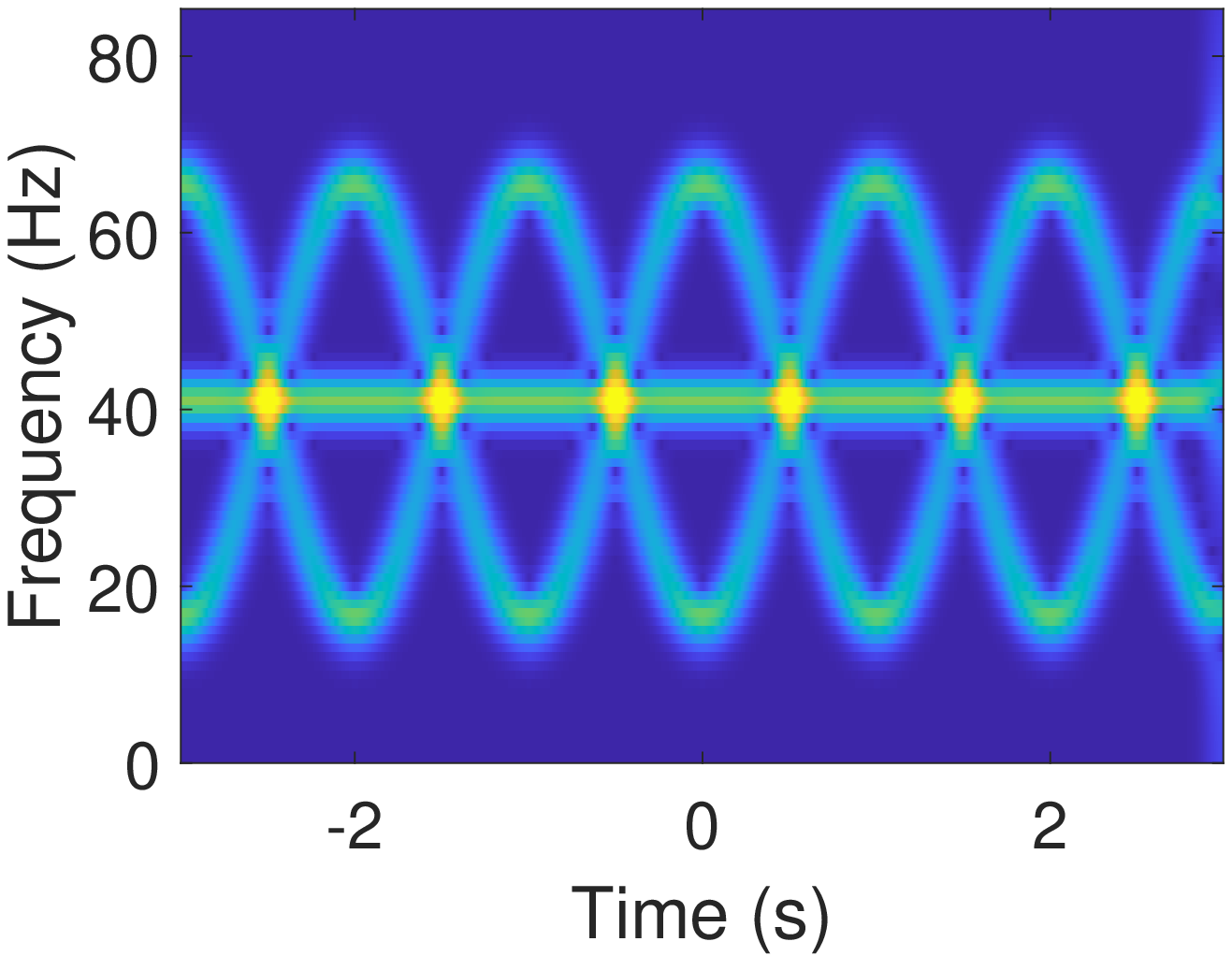}}
	\end{tabular}
	\vskip -0.3cm 
	\caption{\small  Micro-Doppler modulations induced by target\rq{}s tumbling (Left) and  STFT of the signal (Right).}
	\label{figure:Micro_Doppler}
\end{figure}


To be precise, we say that two signal components $x_k(t)$ and $x_\ell(t)$ of a multi-component signal $x(t)$ governed by \eqref{AHM0} overlap in the time-frequency plane at $t=t_0$, if $\phi_k^\gp(t_0) = \phi_\ell^\gp(t_0)$ but  $\phi_k^\gp(t)\not = \phi_k^\gp(t)$ in some deleted neighborhood of $t_0$.
Based on the linear chirp-based model proposed in our previous paper  \cite{LCJ20}, we have extended the SSO method in \cite{Chui_Mhaskar15} by incorporating a chirp rate parameter to introduce a computational scheme in our work \cite{LHJC20} for the recovery of signal components with overlapping frequency curves. 
In the present paper,  we propose another innovative time-scale approach by introducing a 3D time-scale-chirp\_rate transform, formulated by incorporating a complex quadratic phase function with a continuous wavelet-like transform (CWLT), 
to be called an adaptive ``time-scale-chirp\_rate (TSC-R)\rq{}\rq{} component recovery operator, and develop a rigorous theory for assurance of solving the inverse problem in separating the signal components $x_k(t)$ of the multi-component signal $x(t)$ governed by \eqref{AHM0}, without the assumption of well separated IFs, but rather by assuming that 
if the two IF curves of the signal components $x_k$ and $x_{\ell}$ cross at some $t=t_0$, then $|\phi''_{k}(t)-\phi''_{\ell}(t)| \ge \delta$ for some $\delta > 0$, for $|t - t_0| <\epsilon$, where $\epsilon > 0$.

For convenience, we will consider, without loss of generality, the following complex-version of \eqref{AHM0} without the trend function $A_0(t)$ function, namely:
\begin{equation}
  \label{AHM}
  x(t)=\sum_{k=1}^K x_k(t)=\sum_{k=1}^K A_k(t) e^{i2\pi\phi_k(t)}
  \end{equation}
where $A_k(t), \phi_k'(t)>0$.  The reader is referred to \cite{Chui_Mhaskar15} for the methods of polynomial trend removal. 

The presentation of this paper is organized as follows. In Section 2, the adaptive TSC-R operator is introduced and developed, along with some error bounds, for instantaneous frequency estimation and signal components recovery. When the Gaussian function is used as the wavelet-like scalable window, more precise error bounds are derived for the adaptive TSC-R operation in Section 3. Numerical experimental results will be discussed in Section 4.

\section {Time-scale-chirp\_rate signal recovery operator}

To extract and separate the (unknown) signal components with crossover IFs from the multi-component signal governed by \eqref{AHM}, we propose the following adaptive {\bf time-scale-chirp\_rate  signal recovery} (TSC-R) operator, by introducing an adaptive  continuous wavelet-like transform (CWLT), namely: 
	\begin{eqnarray}
	\nonumber
	U_x(a, b, \gl) \hskip -0.6cm  && := \int_{-\infty}^\infty x(t) \frac 1{a}g\big(\frac{t-b}{a\gs(b)}\big)
	e^{-i2\pi \mu  \frac{t-b}{a}}
	  e^{ -i\pi \lambda (t-b)^2} dt\\
\label{def_adaptiveTSC-R}	&&= \int_{\RR} x(b+at)  \frac 1 {\gs(b)} g\big(\frac t {\gs(b)}\big) e^{-i2\pi \mu  t -i\pi \gl a^2 t^2}dt,
	\end{eqnarray}
where $g(t)$ is a window function, $\mu$ is a positive constant, and $\gs(b)$ is a positive function of $b$.  In this paper, all window functions $g$  are assumed to be functions in $L_2(\R)$ that decay to zero at $\infty$ and satisfy $\int_\RR g(t) dt =1$.  Observe that when $\lambda=0$,  $U_x(a, b, \gl)$ is reduced to the adaptive CWLT of $x(t)$, 
denoted by $\wt W_x(a, b)$, as considered in \cite{LCJ18}, and that the TSC-R of $x(t)$ can be considered as a multi-component signal in the 3-dimensional space of time $t$, scale $a$, and chirp rate $\gl$. The importance of this transform is that when the IF curves of two components $x_k(t)$ and $x_\ell(t)$ cross each other, they may be well-separated in the 3-dimensional space by adaptive TSC-R operator, provided that  $\phi_k''(t)\not = \phi_\ell''(t)$ for $t$ in some neighborhood of the cross-over time instant $t_0$.  Thus, a multi-component signal $x(t)$ with certain signal components that have the same IF values can be extracted and well-separated in the 3-dimensional TSC-R space adaptively. Hence, it is feasible to reconstruct signal components by adaptive TSC-R.

In practice, for a particular signal $x(t)$, its 
adaptive CWLT $\wt W_x(a, b)$ 
lies in a region of the scale-time plane: 
$$
\{(a, b): \; a_1(b)\le a\le a_2(b), b\in \RR\}
$$ 
for some $0<a_1(b),  a_2(b) <\infty$. That is $\wt W_x(a,b)$ 
is negligible for  $(a, b)$ outside this region. Throughout this paper we assume for each $b\in \RR$, the scale $a$ is in the interval:
\begin{equation}
\label{a_interval} 
a_1(b)\le a\le a_2(b). 
\end{equation}	

\begin{mdef} 
	\label{def:function_class}
	For $\ep_1>0$ and $\ep_3>0$, let $\cE_{\ep_1,\ep_3}$ denote the set consisting of (complex) adaptive harmonic models (AHMs) defined by \eqref{AHM} with $A_k(t) \in L_\infty(\RR), \; A_k(t)>0, \phi_k(t)\in C^3(\RR), \inf_{b\in \RR} \phi_k'(t)>0, \sup_{b\in \RR} \phi_k'(t)<\infty$,   and 
 $A_k(t), \phi_k(t)$ satisfying 
\begin{eqnarray}
\label{cond_A}
&&	|A_k(t+\tau)-A_k(t)|\leq \vep_1 |\tau|A_{k}(t),~~t\in \RR, \; k=1,\cdots, K, \\
\label{cond_phi}
&&	|\phi'''_{k}(t)| \leq \vep_3,  ~~t\in \RR, \; k=1, \cdots, K. 
	\end{eqnarray}
\end{mdef}

\bigskip 
	
For  a window function $g\in L_1(\RR)$, denote 
\begin{equation}
\label{def_PFT}
\wb g (\eta, \gl):=\int_{\RR} g(t) e^{-i2\pi\eta t-i\pi \gl t^2}dt. 
\end{equation}
$\wb g(\eta, \gl)$ is called a polynomial Fourier transform of $g$ \cite{Bi_Stankovic11,Stankovic13}. Note that 
$\wb g (0, 0)=1$ since $\int_\R g(t) dt=1$. 

When $g$ is the Gaussian function defined by 
\begin{equation}
\label{def_g}
g(t)=\frac 1{\sqrt {2\pi}} \; e^{-\frac {t^2}2},   
\end{equation}
then we have (refer to \cite{Leon_Cohen, LCHJJ18}) 
\begin{equation}
\label{g_PFT} 
\wb g(\eta, \gl)=\frac 1{\sqrt{1+i2\pi\gl}} e^{-\frac{2\pi^2 \eta ^2}{1+i2\pi \gl}},   
\end{equation}
where $\sqrt{1+i2\pi\gl}$ denotes the square root of $1+i2\pi\gl$ lying in the same quadrant as $1+i2\pi\gl$.

\bigskip 

We say $s(t)$ is a linear chirp or a linear frequency modulation signal if
\begin{equation*}
s(t)=A e^{i2\pi \phi(t)}=A e^{i2\pi (ct +\frac 12 r t^2)},   
\end{equation*}
where $c$ and $r$ are constants. We use linear chirps to approximate each $x_k(t)$ at any local time. Namely, we write 
$$
x_k(b+a t)=x_k(b)e^{i2\pi (\phi'_k(b) a t +\frac 12\phi''_k(b) (at)^2) }+ x_{{\rm r}, k}(a, b, t),
$$
where 
$$
 x_{{\rm r}, k}(a, b, t)= x_k(b+a t)-x_k(b)e^{i2\pi (\phi'_k(b) a t +\frac 12\phi''_k(b) (at)^2) }.
$$
Note that, as a function of $t$, $x_k(b)e^{i2\pi (\phi'_k(b)at+\frac 12\phi''_k(b)a^2t^2) }$ is a linear chirp. 
Thus $x(b+at)$ can be approximated by a superposition of linear chirps at any local time $t$:
\begin{equation*}
x(b+a t)=x_{\rm m}(a,b,t)+x_{\rm r}(a,b,t),
\end{equation*}
where
\begin{eqnarray*}
&&x_{\rm m}(a,b,t):=\sum_{k=1}^K x_k(b)e^{i2\pi (\phi_k'(b) at+\frac 12\phi''_k(b) (at)^2) }\; , \\
&&x_{\rm r}(a,b,t):=\sum_{k=1}^K  x_{{\rm r}, k}(a, b, t). 
\end{eqnarray*}

Denote  
\begin{equation}
	\label{def_MSSO_m1}
	 \fr_x(a, b, \gl) :=  
 \int_{\RR} x_{\rm m}(a, b, t)  \frac 1 {\gs(b)} g\big(\frac t {\gs(b)}\big)  e^{-i2\pi \mu t-i\pi \lambda a^2 t^2} dt. 
	\end{equation}
Then we have 
\begin{equation}
\label{def_MSSO_m2}
 \fr_x(a, b, \gl) =\sum_{k=1}^K x_k(b) \wb g\Big( \gs(b)\big(\mu -a \phi'_k(b)\big), \gs^2(b)a^2\big (\gl - \phi''_k(b)\big)\Big). 
	\end{equation}

In the following, we denote 
 \begin{equation}\label{def_M_u}
\nu=\nu(b):= \min_{1\leq k\leq K} A_{k}(b), ~~ M=M(b):=\sum_{k=1}^K A_{k}(b).  
\end{equation}

In the next lemma we provide an error bound for $|U_x(a, b, \gl)-\fr_x(a, b, \gl)|$. 

\begin{lem}\label{lem1} 
Let $x(t)\in \cE_{\ep_1,\ep_3}$ for some $\ep_1>0, \ep_3>0$, and let $U_x(a, b, \gl)$ be its adaptive TSC-R defined by \eqref{def_adaptiveTSC-R} with a window function $g$ and $\fr_x(a, b, \gl)$ the approximation of $U_x(a, b, \gl)$ defined by \eqref{def_MSSO_m1}. Then 
\begin{equation}
\label{S_R_error}
\big|U_x(a, b, \gl)-\fr_x(a, b, \gl)\big|\le M(b)\Pi(a, b), 
\end{equation}   
where 
\begin{equation}
\label{def_Pi0}
\Pi(a, b):=\vep_1 I_1 a \gs(b) + \frac {\pi}3\vep_3  I_3 a^3 \gs^3(b), 
\end{equation} 
 {\rm with}
   \begin{equation}
\label{def_In}
 I_n:=\int_\RR \big|g(t) t^n\big| dt, \; n=1, 2, \cdots. 
\end{equation}
\end{lem}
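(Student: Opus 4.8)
\medskip
\noindent\emph{Proof plan.}
The plan is to exploit that $U_x(a,b,\gl)$ and $\fr_x(a,b,\gl)$ are obtained from the \emph{same} integral kernel, applied respectively to $x(b+at)$ and to its linear-chirp approximation $x_{\rm m}(a,b,t)$. By linearity of the integrals in \eqref{def_adaptiveTSC-R} and \eqref{def_MSSO_m1}, together with $x(b+at)=x_{\rm m}(a,b,t)+x_{\rm r}(a,b,t)$, the difference equals
$$U_x(a,b,\gl)-\fr_x(a,b,\gl)=\int_\RR x_{\rm r}(a,b,t)\,\frac1{\gs(b)}g\Big(\frac t{\gs(b)}\Big)e^{-i2\pi\mu t-i\pi\gl a^2t^2}\,dt.$$
Since $|e^{-i2\pi\mu t-i\pi\gl a^2t^2}|=1$, moving the absolute value inside reduces the task to a pointwise bound on $|x_{\rm r}(a,b,t)|\le\sum_{k=1}^K|x_{{\rm r},k}(a,b,t)|$, so it suffices to control each $|x_{{\rm r},k}(a,b,t)|$ separately.

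For the pointwise bound I would write $x_k(b+at)=A_k(b+at)e^{i2\pi\phi_k(b+at)}$ and, letting $P_k(s):=\phi_k(b)+\phi_k'(b)s+\frac12\phi_k''(b)s^2$ be the second-order Taylor polynomial of $\phi_k$ at $b$, observe that $x_k(b)e^{i2\pi(\phi_k'(b)at+\frac12\phi_k''(b)(at)^2)}=A_k(b)e^{i2\pi P_k(at)}$. Then split into an amplitude-drift part and a phase-curvature part,
$$x_{{\rm r},k}(a,b,t)=\big(A_k(b+at)-A_k(b)\big)e^{i2\pi\phi_k(b+at)}+A_k(b)\big(e^{i2\pi\phi_k(b+at)}-e^{i2\pi P_k(at)}\big).$$
The first summand has modulus $|A_k(b+at)-A_k(b)|\le\vep_1\,a|t|\,A_k(b)$ directly from \eqref{cond_A} with $\tau=at$ (recall $a>0$ by \eqref{a_interval}). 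For the second, the elementary inequality $|e^{i\theta_1}-e^{i\theta_2}|\le|\theta_1-\theta_2|$ combined with Taylor's theorem with Lagrange remainder, $\phi_k(b+at)-P_k(at)=\tfrac16\phi_k'''(\xi)(at)^3$ for some $\xi$ between $b$ and $b+at$, and then \eqref{cond_phi}, give $A_k(b)\big|e^{i2\pi\phi_k(b+at)}-e^{i2\pi P_k(at)}\big|\le 2\pi A_k(b)\cdot\tfrac16\vep_3(a|t|)^3=\tfrac{\pi}{3}\vep_3 A_k(b)\,a^3|t|^3$. Adding the two contributions yields $|x_{{\rm r},k}(a,b,t)|\le A_k(b)\big(\vep_1a|t|+\tfrac{\pi}{3}\vep_3a^3|t|^3\big)$.

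Summing over $k$ and using $\sum_{k=1}^K A_k(b)=M(b)$ from \eqref{def_M_u} gives $|x_{\rm r}(a,b,t)|\le M(b)\big(\vep_1a|t|+\tfrac{\pi}{3}\vep_3a^3|t|^3\big)$, so that
$$\big|U_x(a,b,\gl)-\fr_x(a,b,\gl)\big|\le M(b)\int_\RR\big(\vep_1a|t|+\tfrac{\pi}{3}\vep_3a^3|t|^3\big)\frac1{\gs(b)}\Big|g\Big(\frac t{\gs(b)}\Big)\Big|\,dt.$$
The change of variables $u=t/\gs(b)$ converts the two integrals into $a\gs(b)\int_\RR|u|\,|g(u)|\,du=a\gs(b)I_1$ and $a^3\gs^3(b)\int_\RR|u|^3|g(u)|\,du=a^3\gs^3(b)I_3$, which is exactly $M(b)\Pi(a,b)$ with $\Pi$ as in \eqref{def_Pi0}.

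There is no genuine obstacle here: the entire content is the two-source error split (amplitude drift controlled by $\vep_1$ versus cubic phase curvature controlled by $\vep_3$) followed by the scaling substitution. The only points needing a word of care are that $A_k(t)>0$ makes the bound \eqref{cond_A} meaningful, that $I_1,I_3<\infty$ is implicitly required for $\Pi$ to be finite, and that the Lagrange remainder is applicable uniformly because $\phi_k\in C^3(\RR)$ and \eqref{cond_phi} holds on all of $\RR$.
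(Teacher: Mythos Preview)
Your proposal is correct and follows essentially the same route as the paper: the same amplitude/phase split of each $x_{{\rm r},k}$, the same use of \eqref{cond_A} and Taylor's remainder with \eqref{cond_phi} to get the pointwise bound $M(b)\big(\vep_1 a|t|+\tfrac{\pi}{3}\vep_3 a^3|t|^3\big)$, and the same scaling substitution to produce $I_1,I_3$. The only cosmetic difference is that the paper factors the phase term as $x_k(b)e^{i2\pi(\cdots)}\big(e^{i2\pi(\phi_k(b+at)-P_k(at))}-1\big)$ rather than $A_k(b)\big(e^{i2\pi\phi_k(b+at)}-e^{i2\pi P_k(at)}\big)$, which is algebraically the same and leads to the identical estimate.
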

\begin{proof}  By \eqref{cond_A} and \eqref{cond_phi}, 
\begin{eqnarray*}
&&|x(b+at)-x_{\rm m}(a, b, t)|=|x_{\rm r}(a, b, t)|\\
&&=\sum_{k=1}^K \Big\{(A_k(b+at)-A_k(b))e^{i2\pi \phi_k(b+at)}
\\&&\hskip 1cm 
+x_k(b)e^{i2\pi (\phi'_k(b) at+\frac 12\phi''_k(b)(at)^2) }
\big(
e^{i2\pi (\phi_k(b+at)-\phi_k(b)-\phi_k'(b) at - \frac 12\phi''_k(b)(at)^2)}-1\big)\Big\}\\
&& \le \sum_{k=1}^K\Big\{ 
\Big|A_k(b+at)-A_k(b)\Big| +A_k(b) \Big |i2\pi \Big(\phi_k(b+at)-\phi_k(b)-\phi_k'(b) at - \frac 12\phi''_k(b)(at)^2\Big) 
\Big|\Big\}\\
&& \le \sum_{k=1}^K\Big\{ A_k(b) \vep_1 a |t|  + A_k(b)  2\pi \sup_{\xi \in \RR}  \frac 16 \big |\phi'''_k(\xi) (a t)^3 \big|\Big\}\\
&&\le  M(b) \vep_1 a |t|  + M(b) \frac \pi 3 \vep_3  a^3 |t|^3. 
\end{eqnarray*}
This leads to 
\begin{eqnarray*}
&&\big|U_x(a, b, \gl)-\fr_x(a, b, \gl)\big| =
\Big| \int_\RR (x(b+at)-x_{\rm m}(a, b, t))\frac 1{\gs(b)}g(\frac t{\gs(b)}) e^{-i2\pi \mu  a t-i\pi \gl (a t)^2}dt \Big|\\
&&\qquad  \le\int_\RR  M(b) \big(\vep_1 a|t|  + \frac \pi 3 \vep_3  a^3|t|^3\big)
\big|\frac 1{\gs(b)}g(\frac t{\gs(b)}) \big|dt \\
&&\qquad =M(b)\big(\vep_1 I_1 \gs(b) + \frac {\pi}3\vep_3  I_3 \gs^3(b)\big). 
\end{eqnarray*}
This completes the proof of  Lemma \ref{lem1}.
\end{proof}

\begin{mrem} 
Recall that we assume in this paper $a$ is in an interval as shown in \eqref{a_interval}. Thus, we have 
 \begin{equation}
\label{S_R_error_interal}
\big|U_x(a, b, \gl)-\fr_x(a, b, \gl)\big|\le M(b)\Pi_0(b), 
\end{equation} 
where 
\begin{equation}
\label{def_Pi_large}
\Pi_0(b):=\Pi(a_2,b)=\vep_1 I_1 \gs(b)a_2
+ \frac {\pi}3\vep_3  I_3 \gs^3(b)a_2^3.  
\end{equation} 
\hfill $\blacksquare$ 
\end{mrem}

In the following, we assume  any two IF curves of the signal components $x_k$ and $x_{\ell}$ satsify 
\begin{equation}\label{def_sep_cond_cros}
 \hbox{either} ~~	\frac {|\phi'_{k}(t)-\phi'_{\ell}(t)|}{\phi'_{k}(t)+\phi'_{\ell}(t)}\ge \gt,  \; t\in \RR,  ~~  \hbox{or} ~~ |\phi''_{k}(t)-\phi''_{\ell}(t)| \ge 2 \gt_1, \; t\in \RR,  
	\end{equation}
where $0<\gt<1, \gt_1>0$.  Clearly $\phi'_{k}(t)$ and $\phi'_{\ell}(t)$ could be cross over at a time instant. 
For $1\le k\le K$, define  
\begin{equation}
\label{def_Zk}
Z_k:=\{(a, b, \gl):  \; 
	|\mu-a \phi'_k(b)|<\gt ~ \hbox{and} ~ |\gl-\phi''_k(b)| <  \gt_1, \; b\in \RR\}.     
\end{equation}  

\begin{lem}\label{lem:Zk_disjoint}
 If $\phi_k$ satisfies \eqref{def_sep_cond_cros}, then $Z_k, 1\le k\le K$ are disjoint, that is $Z_\ell \cap Z_{ k}=\emptyset$ for $\ell\not=k$. 
\end{lem}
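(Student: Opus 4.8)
The natural approach is proof by contradiction. Fix $k\ne\ell$ and suppose some triple $(a,b,\gl)$ lies in $Z_k\cap Z_\ell$. Unwinding \eqref{def_Zk}, this forces the four strict inequalities
\[
|\mu-a\phi_k'(b)|<\gt,\qquad |\mu-a\phi_\ell'(b)|<\gt,\qquad |\gl-\phi_k''(b)|<\gt_1,\qquad |\gl-\phi_\ell''(b)|<\gt_1
\]
at this particular $b$. On the other hand, \eqref{def_sep_cond_cros} guarantees that for the pair $(k,\ell)$ either the first alternative holds for all $t$ or the second does, in particular at $t=b$. The plan is to treat the two alternatives separately and show that each one collides with the four inequalities above.

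If the chirp-rate alternative $|\phi_k''(t)-\phi_\ell''(t)|\ge 2\gt_1$ is the one that holds, then at $t=b$ the triangle inequality gives $|\phi_k''(b)-\phi_\ell''(b)|\le|\phi_k''(b)-\gl|+|\gl-\phi_\ell''(b)|<2\gt_1$, a contradiction. If instead the frequency alternative $|\phi_k'(t)-\phi_\ell'(t)|/\big(\phi_k'(t)+\phi_\ell'(t)\big)\ge\gt$ holds, I would multiply numerator and denominator at $t=b$ by the positive scale $a$ and abbreviate $u:=a\phi_k'(b)$, $v:=a\phi_\ell'(b)$; the first two inequalities then say $u,v\in(\mu-\gt,\mu+\gt)$, while the separation bound becomes $|u-v|/(u+v)\ge\gt$. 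Assuming $u\ge v$ without loss of generality, the latter rearranges to $u/v\ge(1+\gt)/(1-\gt)$, whereas $u<\mu+\gt$ and $v>\mu-\gt$ force $u/v<(\mu+\gt)/(\mu-\gt)$. These two bounds are incompatible once $\mu\ge 1$, because $(1+\gt)(\mu-\gt)-(\mu+\gt)(1-\gt)=2\gt(\mu-1)\ge 0$, i.e. $(1+\gt)/(1-\gt)\ge(\mu+\gt)/(\mu-\gt)$. In either alternative we obtain a contradiction, whence $Z_k\cap Z_\ell=\emptyset$.

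I expect the frequency case to be the only delicate point, and there the crux is the normalization of $\mu$: the crude estimate $|u-v|<2\gt$ together with $u+v>2(\mu-\gt)$ merely gives $|u-v|/(u+v)<\gt/(\mu-\gt)$, which would require $\mu\ge 1+\gt$; it is the sharper bookkeeping through the ratio $u/v$ that makes $\mu\ge 1$ — which I take to be among the standing hypotheses on $\mu$, and which is in fact necessary for the statement as a simple example with $\mu<1$ shows — exactly enough to match the common constant $\gt$ appearing in \eqref{def_sep_cond_cros} and \eqref{def_Zk}. Everything else is elementary triangle-inequality bookkeeping, so I do not anticipate further obstacles.
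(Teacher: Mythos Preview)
Your argument is correct. The paper actually omits the proof entirely, calling it ``straightforward,'' so there is no approach to compare against; your contradiction argument with the two alternatives of \eqref{def_sep_cond_cros} handled separately is exactly the natural route.

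Your observation about the constant $\mu$ is a genuine contribution: the paper only assumes $\mu>0$ in the definition of $U_x$, yet as your ratio computation (and your remark that a counterexample exists for $\mu<1$) shows, the frequency alternative really needs $\mu\ge 1$ for the two windows $(\mu-\gt,\mu+\gt)$ in the $a\phi'$-variable to be incompatible with the separation condition. The paper does set $\mu=1$ in all experiments, so this is evidently the intended regime, but it is not stated as a hypothesis. Your sharper bookkeeping via $u/v$ rather than the cruder bound $|u-v|/(u+v)<\gt/(\mu-\gt)$ is precisely what pins down $\mu\ge 1$ as the correct threshold, and this is worth recording.
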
 
The proof of Lemma \ref{lem:Zk_disjoint} is straightforward and it is omitted. 

\bigskip 

Note that for $(a, b, \gl) \in Z_\ell$, the scale variable $a$ satisfies 
$$
\frac {\mu-\gt}{\phi'_\ell(b)}< a <\frac {\mu+\gt}{\phi'_\ell(b)}. 
$$
Hence  for any $(a, b, \gl) \in Z_\ell$,  by \eqref{S_R_error}, we have 
\begin{equation}
\label{S_R_error1}
\big|U_x(a, b, \gl)-\fr_x(a, b, \gl)\big|\le M(b)\Pi_\ell(b), 
\end{equation}   
where 
\begin{equation}
\label{def_Pi_ell}
\Pi_\ell(b):=\Pi(\frac {\mu+\gt}{\phi'_\ell(b)},b)=\vep_1 I_1 \gs(b)\frac {\mu+\gt}{\phi'_\ell(b)} 
+ \frac {\pi}3\vep_3  I_3 \gs^3(b)\Big(\frac {\mu+\gt}{\phi'_\ell(b)}\Big)^3. 
\end{equation}

\bigskip 

For a fixed $b$, and a positive number  $\wt \ep_1$, 
we let $\cH_b$ and $\cH_{b, k}$ denote the sets defined by 
\begin{equation}
\label{def_cGk}
\begin{array}{l}
\cH_b:=\big\{(a, \gl): \; |U_x(a, b, \gl)|>\wt \ep_1\big\}, \\
\cH_{b, k}:=\Big\{(a, \gl) \in \cH_b: \;  
|\mu-a \phi'_k(b)|<\gt ~ \hbox{and} ~ |\gl-\phi''_k(b)| <  \gt_1\Big\}.  
\end{array}
\end{equation}
Note that $\cH_b$ and $\cH_{b, k}$ depend on $\wt \ep_1$, and for simplicity of presentation, we drop  
$\wt \ep_1$ from them. 

Let $\Upsilon(b), \Upsilon_{\ell, k}(b)$ with $\Upsilon(b)\ge \Upsilon_{\ell, k}(b)$ for $k\not=\ell$  
be some functions satisfying 
 \begin{equation}
 \label{def_upper_bounds}
 \begin{array}{l}
 \sup_{\{(a, \gl): (a, b, \gl)\not \in \cup_{k=1}^K Z_k\}}\big|\wb g\big( \gs(b)(\mu -a \phi'_k(b)), \gs^2(b)a^2 (\gl - \phi''_k(b))\big)\big|\le \gUp(b), \\
\sup_{\{(a, \gl): (a, b, \gl)\not \in Z_\ell\}}
\big|\wb g\big( \gs(b)(\mu -a \phi'_k(b)), \gs^2(b)a^2 (\gl - \phi''_k(b))\big)\big | \le \gUp_{\ell,  k}(b). 
 \end{array}
 \end{equation}
 About the quantities $\Upsilon(b)$ and $\Upsilon_{\ell, k}(b)$, refer to Section 3 when $g$ is the Gaussian window function. 
 
\bigskip 
Next we provide another lemma which will be used to derive our main theorem. In the following lemma and the rest of this paper, $\sum_{k\not= \ell}$ denotes $\sum_{\{k: ~ k\not= \ell, 1\le k\le K\}}$.

\begin{lem}
Let $x(t)\in \cE_{\ep_1,\ep_3}$ for some $\ep_1>0,\ep_3>0$, and $U_x(a, b, \gl)$ be the adaptive TSC-R of $x(t)$ with a  window function $g$.  Then for any  $(a, \gl)\in \cH_{b, \ell}$,
\begin{eqnarray}
\label{U_with_ell}
&&\big|U_x(a, b, \gl)-x_\ell(b) \wb g\big( \gs(b)(\mu -a \phi'_\ell(b)), \gs^2(b)a^2(\gl - \phi''_\ell(b))\big) \big|\le \Err_\ell(b),   
\end{eqnarray}   
where 
\begin{equation}
\label{def_Err}
\Err_\ell(b):=M(b)\Pi_\ell(b)+\sum_{k\not= \ell} A_k(b) \gUp_{\ell, k}(b)
\end{equation}
with $\Pi_\ell(b)$ defined by \eqref{def_Pi_ell}. 
\end{lem}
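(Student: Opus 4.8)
The plan is to start from the decomposition $U_x(a,b,\gl) = \fr_x(a,b,\gl) + \big(U_x(a,b,\gl) - \fr_x(a,b,\gl)\big)$ and use the explicit formula \eqref{def_MSSO_m2} for $\fr_x$, namely $\fr_x(a,b,\gl) = \sum_{k=1}^K x_k(b)\,\wb g\big(\gs(b)(\mu - a\phi_k'(b)), \gs^2(b)a^2(\gl - \phi_k''(b))\big)$. Isolating the $k=\ell$ term, we get
\begin{equation*}
U_x(a,b,\gl) - x_\ell(b)\wb g\big(\gs(b)(\mu - a\phi_\ell'(b)), \gs^2(b)a^2(\gl - \phi_\ell''(b))\big) = \sum_{k\not=\ell} x_k(b)\wb g\big(\gs(b)(\mu - a\phi_k'(b)), \gs^2(b)a^2(\gl - \phi_k''(b))\big) + \big(U_x - \fr_x\big).
\end{equation*}
Applying the triangle inequality, the left-hand quantity in \eqref{U_with_ell} is bounded by $\sum_{k\not=\ell}|x_k(b)|\,\big|\wb g\big(\gs(b)(\mu - a\phi_k'(b)), \gs^2(b)a^2(\gl - \phi_k''(b))\big)\big| + \big|U_x(a,b,\gl) - \fr_x(a,b,\gl)\big|$.

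For the second term, I would invoke \eqref{S_R_error1}: since $(a,\gl)\in\cH_{b,\ell}$ means in particular $|\mu - a\phi_\ell'(b)| < \gt$ (and $|\gl - \phi_\ell''(b)| < \gt_1$), the point $(a,b,\gl)$ lies in $Z_\ell$, so the error bound $M(b)\Pi_\ell(b)$ from \eqref{S_R_error1} applies directly. For the first term, the key observation is that for each $k\not=\ell$, the point $(a,b,\gl)$ does \emph{not} lie in $Z_\ell$... wait, it does lie in $Z_\ell$. The correct point is that I want to bound $\big|\wb g\big(\gs(b)(\mu - a\phi_k'(b)), \gs^2(b)a^2(\gl - \phi_k''(b))\big)\big|$ for $k\not=\ell$ when $(a,b,\gl)\in Z_\ell$; by Lemma~\ref{lem:Zk_disjoint} the sets $Z_k$ are disjoint, so $(a,b,\gl)\in Z_\ell$ forces $(a,b,\gl)\notin Z_k$ for $k\not=\ell$, and hence the second supremum bound in \eqref{def_upper_bounds}, taken over $\{(a,\gl): (a,b,\gl)\notin Z_\ell\}$ with the index being $k$, gives exactly $\gUp_{\ell,k}(b)$. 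Thus $\sum_{k\not=\ell}|x_k(b)|\,|\wb g(\cdots)| \le \sum_{k\not=\ell} A_k(b)\gUp_{\ell,k}(b)$, using $|x_k(b)| = A_k(b)$ from the model \eqref{AHM}.

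Combining the two bounds yields $\big|U_x(a,b,\gl) - x_\ell(b)\wb g(\cdots)\big| \le M(b)\Pi_\ell(b) + \sum_{k\not=\ell} A_k(b)\gUp_{\ell,k}(b) = \Err_\ell(b)$, which is \eqref{U_with_ell}. The only mild subtlety worth spelling out carefully is the containment $(a,b,\gl)\in Z_\ell$: it follows because $(a,\gl)\in\cH_{b,\ell}$ encodes precisely the two defining inequalities of $Z_\ell$ (namely $|\mu - a\phi_\ell'(b)| < \gt$ and $|\gl - \phi_\ell''(b)| < \gt_1$), together with $b\in\RR$; once this is noted, both \eqref{S_R_error1} (for the remainder) and the disjointness from Lemma~\ref{lem:Zk_disjoint} (to route each $k\not=\ell$ term through the $\gUp_{\ell,k}(b)$ bound) apply mechanically. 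There is no real obstacle here — the lemma is essentially a bookkeeping consequence of \eqref{def_MSSO_m2}, Lemma~\ref{lem1}/its $Z_\ell$-refinement, and the definitions of $Z_k$, $\cH_{b,\ell}$, and $\gUp_{\ell,k}$; the one place to be attentive is matching the index conventions in \eqref{def_upper_bounds} (the supremum is over the complement of $Z_\ell$ while the argument of $\wb g$ uses $\phi_k'$, $\phi_k''$), so that the right term of \eqref{def_upper_bounds} is the one being used.
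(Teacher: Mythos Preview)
Your proposal is correct and follows exactly the paper's approach: split $U_x - x_\ell(b)\wb g(\cdots)$ via $\fr_x$, bound $|U_x - \fr_x|$ by $M(b)\Pi_\ell(b)$ using \eqref{S_R_error1} (since $(a,\gl)\in\cH_{b,\ell}$ implies $(a,b,\gl)\in Z_\ell$), and bound each $k\not=\ell$ term by $A_k(b)\gUp_{\ell,k}(b)$. Your added justification via the disjointness of the $Z_k$ is more explicit than the paper's proof (which simply asserts the $\gUp_{\ell,k}$ bound); note that as literally typeset, the second line of \eqref{def_upper_bounds} has the constraint $(a,b,\gl)\notin Z_\ell$ where $(a,b,\gl)\notin Z_k$ (equivalently $\in Z_\ell$) is evidently intended --- your disjointness argument is exactly what makes the intended reading work.
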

\begin{proof} By \eqref{def_MSSO_m2}, we have for any $(a, \gl)\in \cH_{b, \ell}$, 
\begin{eqnarray*}
&&	\big|\fr_x(a, b, \gl)-x_\ell(b) \wb g\big( \gs(b)(\mu -a \phi'_\ell(b)), \gs^2(b)a^2(\gl - \phi''_\ell(b))\big) \big| \\
&& =\Big|\sum_{k\not= \ell } x_k(b) \wb g\big( \gs(b)(\mu -a \phi'_k(b)), \gs^2(b)a^2 (\gl - \phi''_k(b))\big)\Big|\\
&& \le \sum_{k\not= \ell } A_k(b)  \Big| \wb g\big( \gs(b)(\mu -a \phi'_k(b)), \gs^2(b)a^2 (\gl - \phi''_k(b))\big)\Big| 
\le \sum_{k\not= \ell } A_k(b)  \gUp_{\ell, k}(b).  
\end{eqnarray*}
This, along with \eqref{S_R_error1}, leads to that 
\begin{eqnarray*}
&&	\hbox{Left hand side of \eqref{U_with_ell}}\\
&& \le   \big|U_x(a, b, \gl)-\fr_x(a, b, \gl)\big|+ 
\big|\fr_x(a, b, \gl)-x_\ell(b) \wb g\big( \gs(b)(\mu -a \phi'_\ell(b)), \gs^2(b)a^2(\gl - \phi''_\ell(b))\big) \big| \\
&& \le M(b)\Pi_\ell(b)+\sum_{k\not= \ell } A_k(b)  \gUp_{\ell, k}(b).  
\end{eqnarray*}
Thus \eqref{U_with_ell} holds true. 
\end{proof}

Next we have the following theorem. 
\begin{theo}\label{theo:adaptive TSC-R1}
Let $x(t)\in \cE_{\ep_1,\ep_3}$ for some $\ep_1>0,\ep_3>0$, and $U_x(a, b, \gl)$ be the adaptive TSC-R of $x(t)$ with a  window function $g$.  Suppose $x(t)$ satisfies \eqref{def_sep_cond_cros} for some $0<\gt<1$ and $\gt_1>0$, and 
\begin{equation}
\label{theo1_cond1}
2M(b)\big(\gUp(b)+\Pi_0(b)\big)\le \nu(b) 
\end{equation}
holds.  Let $\cH_b$ and $\cH_{b, k}$ be the sets defined by \eqref{def_cGk} with a function $\wt \ep_1=\wt \ep_1(b)>0$ satisfying   
\begin{equation}
\label{cond_ep1}
M(b)\big(\gUp(b)+\Pi_0(b)\big)\le 
\wt \ep_1 \le \nu(b)-M(b)\big(\gUp(b)+\Pi_0(b)\big).
\end{equation}
Then the following statements hold.
\begin{enumerate}
\item[{\rm (a)}] $\cH_b=\cup_{k=1}^K \cH_{b, k}$.  
\item[{\rm (b)}]  The sets $\cH_{b, k}, 1\le k\le K$ are disjoint, i.e. $\cH_{b, k}\cap {\cH}_{b, k\rq{}}=\emptyset$ if $k\not= k\rq{}$. 
\item[{\rm (c)}] Each set $\cH_{b, k}$ is non-empty. 
\end{enumerate}
\end{theo}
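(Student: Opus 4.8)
The plan is to exploit the quantitative estimate in \eqref{U_with_ell} together with the lower bound \eqref{theo1_cond1} and the calibration \eqref{cond_ep1} of the threshold $\wt\ep_1$. First I would establish part (b), since it is the easiest: suppose $(a,\gl)\in \cH_{b,k}\cap\cH_{b,k'}$ with $k\neq k'$. Then $(a,b,\gl)$ satisfies both $|\mu-a\phi_k'(b)|<\gt$, $|\gl-\phi_k''(b)|<\gt_1$ and the analogous inequalities for $k'$, so $(a,b,\gl)\in Z_k\cap Z_{k'}$. But Lemma \ref{lem:Zk_disjoint} says the $Z_k$ are pairwise disjoint, a contradiction; hence part (b) holds.

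Next I would prove the inclusion $\cup_{k=1}^K\cH_{b,k}\subseteq\cH_b$ (trivial, by the very definition of $\cH_{b,k}$ as a subset of $\cH_b$) and, for the reverse inclusion in part (a), argue by contradiction: take $(a,\gl)\in\cH_b$ with $(a,b,\gl)\notin\cup_{k=1}^K Z_k$. Then, starting from the identity behind \eqref{def_MSSO_m2} and using the first line of \eqref{def_upper_bounds} to bound every term $|\wb g(\cdots)|\le\gUp(b)$, I get $|\fr_x(a,b,\gl)|\le M(b)\gUp(b)$; combining with the remark \eqref{S_R_error_interal} gives $|U_x(a,b,\gl)|\le M(b)(\gUp(b)+\Pi_0(b))$. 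By the left inequality in \eqref{cond_ep1} this is $\le\wt\ep_1$, contradicting $(a,\gl)\in\cH_b$. Therefore every $(a,\gl)\in\cH_b$ has $(a,b,\gl)\in Z_k$ for some $k$, i.e. it lies in $\cH_{b,k}$, which proves part (a).

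For part (c) I would exhibit an explicit point in each $\cH_{b,k}$. Fix $k$ and choose $a=a^\ast:=\mu/\phi_k'(b)$ and $\gl=\gl^\ast:=\phi_k''(b)$, so that $\mu-a^\ast\phi_k'(b)=0$ and $\gl^\ast-\phi_k''(b)=0$; in particular the $Z_k$-membership conditions hold with strict inequalities $0<\gt$, $0<\gt_1$. At this point the $k$-th term of \eqref{def_MSSO_m2} equals $x_k(b)\,\wb g(0,0)=x_k(b)$ (using $\wb g(0,0)=1$), so by \eqref{U_with_ell} with $\ell=k$ we get $|U_x(a^\ast,b,\gl^\ast)|\ge |x_k(b)|-\Err_k(b)=A_k(b)-\Err_k(b)$. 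It remains to check $A_k(b)-\Err_k(b)>\wt\ep_1$. Since $\Pi_k(b)\le\Pi_0(b)$ (as $(\mu+\gt)/\phi_k'(b)\le a_2(b)$ on $Z_k$, or one argues directly) and $\gUp_{\ell,k}(b)\le\gUp(b)$, we have $\Err_k(b)\le M(b)\Pi_0(b)+\sum_{j\neq k}A_j(b)\,\gUp(b)\le M(b)(\Pi_0(b)+\gUp(b))$; hence $A_k(b)-\Err_k(b)\ge\nu(b)-M(b)(\Pi_0(b)+\gUp(b))\ge\wt\ep_1$ by the right inequality in \eqref{cond_ep1}. Strictness then follows because condition \eqref{theo1_cond1} forces $M(b)(\gUp(b)+\Pi_0(b))<\nu(b)$ unless everything is degenerate; one gets a strict inequality by choosing $\wt\ep_1$ strictly below the upper endpoint, or by noting the threshold in \eqref{def_cGk} is strict. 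Thus $(a^\ast,\gl^\ast)\in\cH_{b,k}$ and the set is non-empty.

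The main obstacle I anticipate is the bookkeeping in part (c): one must verify that the chosen test point $(a^\ast,b,\gl^\ast)$ indeed lies in the admissible scale interval \eqref{a_interval} (so that \eqref{S_R_error1} and \eqref{U_with_ell} legitimately apply there), and one must be careful that the chain of bounds $\Err_k(b)\le M(b)(\Pi_0(b)+\gUp(b))$ uses the correct monotonicity of $\Pi_\ell$ in $a$ and the hypothesis $\gUp(b)\ge\gUp_{\ell,k}(b)$; all the other parts are essentially immediate from the disjointness lemma and the definition of the threshold. A minor subtlety is converting the non-strict bound $A_k(b)-\Err_k(b)\ge\wt\ep_1$ into the strict inequality needed for membership in the open-type set $\cH_{b,k}$, which is handled by hypothesis \eqref{theo1_cond1} being a $\le$ with room, or by perturbing $\wt\ep_1$ slightly within the range allowed by \eqref{cond_ep1}.
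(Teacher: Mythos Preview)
Your proposal is correct and follows essentially the same route as the paper: part (b) via the disjointness of the $Z_k$ (Lemma~\ref{lem:Zk_disjoint}), part (a) by contradiction using \eqref{def_upper_bounds} and \eqref{S_R_error_interal} to force $|U_x|\le M(b)(\gUp(b)+\Pi_0(b))\le\wt\ep_1$, and part (c) by testing the explicit point $(\mu/\phi_\ell'(b),\phi_\ell''(b))$ and invoking \eqref{U_with_ell}. The one place where you hedge---the strict inequality in (c)---is handled in the paper simply by observing that $\sum_{k\ne\ell}A_k(b)\gUp_{\ell,k}(b)\le\big(\sum_{k\ne\ell}A_k(b)\big)\gUp(b)<M(b)\gUp(b)$ (the sum omits the positive term $A_\ell(b)$), which already gives $A_\ell(b)-\Err_\ell(b)>\nu(b)-M(b)(\Pi_0(b)+\gUp(b))\ge\wt\ep_1$ without any perturbation of $\wt\ep_1$.
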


We delay the proof of Theorem \ref{theo:adaptive TSC-R1} to the end of this section.

\bigskip 
Denote 
\begin{equation}
\label{def_max_eta}
 (\wh a_\ell, \wh \gl_\ell) =(\wh a_\ell(b), \wh \gl_\ell(b)):={\rm argmax}_{(a, \gl) \in\cH_{b, \ell}  }|U_x(a, b,  \gl)|, ~~ \ell=1, \cdots,  K.
\end{equation}
From Theorem \ref{theo:adaptive TSC-R1},  we know $\wh a_\ell(b)$ and $\wh \gl_\ell(b)$ are well defined. We will use them to estimate $\phi'_\ell(b)$, chirp rate $\phi^{\gp\gp}_\ell(b)$ and to recover $x_\ell(b)$.  More precisely, we have the following 
TSC\_R operator scheme for IF estimation and component recovery. 

\begin{alg} {\bf (Time-Scale-Chirp\_rate operator scheme)} \; Suppose  $x(t)\in \cE_{\ep_1,\ep_3}$ satisfies the conditions in Theorem \ref{theo:adaptive TSC-R1}. 
\begin{itemize}
\item[] {\bf Step 1.} Calculate  $\wh a_\ell(b)$ and $\wh \gl_\ell(b)$ by \eqref{def_max_eta}.
\item[] {\bf Step 2.} Obtain IF and chirp rate estimates by 
\begin{equation}
\label{IF_estimate}
\phi'_\ell(b) \approx \frac \mu{\wh a_\ell(b)}, \quad 
\phi^{\gp\gp}_\ell(b) \approx \wh \gl_\ell(b), 
\end{equation} 
\item[] {\bf Step 3.} Obtain the recovered $\ell$-th component by 
\begin{equation}
\label{comp_recover}
x_\ell(b)\approx U_x(\wh a_\ell,  b, \wh \gl_\ell).  
\end{equation}
\hfill $\blacksquare$ 
\end{itemize}
\end{alg}

Observe that the recovered component is obtained  
simply  by substituting the time-scale ridge $\wh a_\ell(b)$ and time-chirp rate ridge $\wh \gl_\ell(b)$ to adaptive TSC-R, which is different from SST method with which the recovered $x_k(t)$ is computed by a definite integral along each estimated IF curve on the SST plane.  

Next we study the error bounds for these approximations. To this regard, we introduce admissible window functions.  

\begin{mdef}\label{definition2} {\rm ({\bf Admissible window function})} \; 
A function  $g(t)$ in $L_2(\R)$  
is called an admissible window function if $\int_\RR g(t)dt=1$,  $g$ has certain at $\infty$ and satisfies the following conditions.
\begin{itemize}
\item[{\rm (a)}]  $|\wb g(\eta, \gl)|$ can be written as $f(|\eta|, |\gl|)$ for some function $f(\xi_1, \xi_2)$ defined on $0\le \xi_1, \xi_2< \infty$. 

\item[{\rm (b)}] There exist $c_0$ with $0<c_0<1$ and (strictly) decreasing non-negative continuous functions $\gb(\xi)$ and $\gga(\xi)$ on $[0, \infty)$ with $\gb(0)=1$, $\gga(0)=1$ 
such that if $f$ in {\rm (a)} satisfies 
\begin{equation}
\label{ineq_cond}
1-c\le f(\eta, \gl), 
\end{equation} 
for some $c$ with $0\le c \le c_0$ and $\eta, \gl$, then 
 \begin{equation}
\label{cond_gb_gga}
 1-c\le \gb(\eta), \quad  1-c\le \gga(\gl).
\end{equation}  
\end{itemize}
\end{mdef}

\begin{theo}\label{theo:adaptive TSC-R2}
Let $x(t)\in \cE_{\ep_1,\ep_3}$ for some $\ep_1>0,\ep_3>0$, and $U_x(a, b, \gl)$ be the adaptive TSC-R of $x(t)$ with an admissible window function $g$ for certain $c_0$ such that \eqref{cond_gb_gga} holds.  Suppose \eqref{def_sep_cond_cros} 
and \eqref{theo1_cond1} hold and that for $1\le \ell \le K$, $2\Err_\ell(b)/A_\ell(b)\le c_0$, where $\Err_\ell(b)$ is defined by \eqref{def_Err}. Let $\cH_b$ and $\cH_{b, k}$ be the sets defined by \eqref{def_cGk} for some 
$\wt \ep_1$ satisfying  \eqref{cond_ep1}. Let $\wh a_\ell(b), \wh \gl_\ell(b)$ be the functions defined by \eqref{def_max_eta}. Then the following statements hold.
\begin{enumerate}
\item[{\rm (a)}] For $\ell=1, 2, \cdots, K$, 
\begin{eqnarray}
&&\label{phi_est}
|\mu - \wh a_{\ell}(b)\phi_{\ell}'(b)|\le \Bd_{1, \ell}:=\frac{1}{\gs(b)} \gb^{-1}\big(1-\frac {2 \; \Err_\ell(b)}{A_\ell(b)}\big), \\
&&\label{phi_est_gl}
|\wh\gl_{\ell}(b)-\phi_{\ell}''(b)|\le \Bd_{2, \ell}:=\frac{1}{\gs^2(b)\wh a_\ell^2} \gga^{-1}\big(1-\frac {2 \; \Err_\ell(b)}{A_\ell(b)}\big). 
\end{eqnarray}
\item[{\rm (b)}] For $\ell=1, 2, \cdots, K$, 
\begin{equation}
\label{comp_xk_est}
\big|U_{x}(\wh a_\ell, b, \wh \gl_\ell)- x_\ell(b)\big|
\le\Bd_{3, \ell},
\end{equation}
where 
\begin{equation*}
  \Bd_{3, \ell}:=\Err_\ell(b)+2\pi I_1 A_\ell(b) \gb^{-1}\big(1-\frac {2 \; \Err_\ell(b)}{A_\ell(b)}\big)
+\pi I_2 A_\ell(b) \gga^{-1}\big(1-\frac {2 \; \Err_\ell(b)}{A_\ell(b)}\big)  
\end{equation*}
with $I_1$ and $I_2$ defined by \eqref{def_In}.
\item[{\rm (c)}] 
 If, in addition, the window function $g(t)\ge 0$ for $t\in \RR$, then for $\ell=1, 2, \cdots, K$,
\begin{equation}
\label{abs_IA_est}
\big| |U_x(\wh a_{\ell}, b, \wh \gl_\ell)|-A_{\ell}(b) \big|\le \Err_\ell(b). 
\end{equation}
\end{enumerate}
\end{theo}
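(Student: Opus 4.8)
The plan is to treat the three parts in sequence, building on the master estimate \eqref{U_with_ell} from the previous lemma together with the structural properties (a), (b), (c) of the sets $\cH_{b,k}$ furnished by Theorem \ref{theo:adaptive TSC-R1}. The starting observation for part (a) is that since $(\wh a_\ell, \wh \gl_\ell)\in \cH_{b,\ell}$ is the argmax of $|U_x(a,b,\gl)|$ over $\cH_{b,\ell}$, and since $\cH_{b,\ell}$ is non-empty and in fact contains points where $|U_x|$ is close to $A_\ell(b)$ (the value of $|x_\ell(b)\wb g(0,0)|$), we get a lower bound $|U_x(\wh a_\ell,b,\wh\gl_\ell)|\ge A_\ell(b)-\Err_\ell(b)$. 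Feeding this back into \eqref{U_with_ell} at the point $(\wh a_\ell,\wh\gl_\ell)$ and using the reverse triangle inequality yields
\begin{equation*}
A_\ell(b)\,\big|\wb g\big(\gs(b)(\mu-\wh a_\ell\phi'_\ell(b)),\ \gs^2(b)\wh a_\ell^2(\wh\gl_\ell-\phi''_\ell(b))\big)\big|\ \ge\ A_\ell(b)-2\,\Err_\ell(b),
\end{equation*}
i.e. $f(|\eta|,|\gl'|)\ge 1-2\Err_\ell(b)/A_\ell(b)$ with $\eta=\gs(b)(\mu-\wh a_\ell\phi'_\ell(b))$ and $\gl'=\gs^2(b)\wh a_\ell^2(\wh\gl_\ell-\phi''_\ell(b))$. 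Here is exactly where the admissibility hypothesis enters: condition (a) of Definition \ref{definition2} lets us write $|\wb g|$ as $f(|\eta|,|\gl'|)$, and condition (b), with $c=2\Err_\ell(b)/A_\ell(b)\le c_0$, gives $1-c\le \gb(|\eta|)$ and $1-c\le \gga(|\gl'|)$. Inverting the strictly decreasing functions $\gb$ and $\gga$ then produces $|\eta|\le \gb^{-1}(1-c)$ and $|\gl'|\le \gga^{-1}(1-c)$, which after dividing out $\gs(b)$ and $\gs^2(b)\wh a_\ell^2$ are precisely \eqref{phi_est} and \eqref{phi_est_gl}.

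For part (b), I would decompose the error $|U_x(\wh a_\ell,b,\wh\gl_\ell)-x_\ell(b)|$ via the triangle inequality into the ``cross-term'' part controlled by \eqref{U_with_ell}, namely $\Err_\ell(b)$, plus the ``diagonal'' part $|x_\ell(b)\wb g(\eta,\gl')-x_\ell(b)| = A_\ell(b)\,|\wb g(\eta,\gl')-1|$. The task is then to bound $|\wb g(\eta,\gl')-1|$ in terms of the now-controlled quantities $|\eta|$ and $|\gl'|$. Writing $\wb g(\eta,\gl')-1=\int_\RR g(t)\big(e^{-i2\pi\eta t-i\pi\gl' t^2}-1\big)dt$ and using $|e^{i\theta}-1|\le|\theta|$, one gets $|\wb g(\eta,\gl')-1|\le 2\pi|\eta| I_1+\pi|\gl'| I_2$ with $I_1,I_2$ as in \eqref{def_In}. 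Multiplying by $A_\ell(b)$ and substituting the bounds $|\eta|\le \gs(b)\Bd_{1,\ell}=\gb^{-1}(1-c)$ and $|\gl'|\le \gs^2(b)\wh a_\ell^2\Bd_{2,\ell}=\gga^{-1}(1-c)$ from part (a) yields exactly the stated $\Bd_{3,\ell}$. For part (c), when $g\ge0$ we have $|\wb g(\eta,\gl')|\le \wb g(0,0)=\int g=1$, so from \eqref{U_with_ell} the quantity $|U_x(\wh a_\ell,b,\wh\gl_\ell)|$ lies within $\Err_\ell(b)$ of $|x_\ell(b)\wb g(\eta,\gl')|=A_\ell(b)|\wb g(\eta,\gl')|\le A_\ell(b)$; combined with the lower bound $|U_x|\ge A_\ell(b)-\Err_\ell(b)$ established at the start of part (a), we get $\big||U_x(\wh a_\ell,b,\wh\gl_\ell)|-A_\ell(b)\big|\le\Err_\ell(b)$.

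The main obstacle is the very first step — producing a sharp lower bound $|U_x(\wh a_\ell,b,\wh\gl_\ell)|\ge A_\ell(b)-\Err_\ell(b)$. This requires exhibiting a specific test point in $\cH_{b,\ell}$ at which $|U_x|$ is provably large; the natural candidate is a point $(a,\gl)$ with $a\phi'_\ell(b)=\mu$ and $\gl=\phi''_\ell(b)$ (so that the $\wb g$ argument for index $\ell$ is $(0,0)$ and $\wb g(0,0)=1$), provided one checks that such a point actually lies in the admissible scale interval \eqref{a_interval} and satisfies the defining inequalities of $\cH_{b,\ell}$ — which needs $A_\ell(b)-\Err_\ell(b)\ge\wt\ep_1$, a consequence of \eqref{cond_ep1} together with $\Err_\ell(b)\le M(b)(\gUp(b)+\Pi_0(b))$ (the latter holding since $\gUp_{\ell,k}(b)\le\gUp(b)$ and $\Pi_\ell(b)\le\Pi_0(b)$). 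Once this lower bound and the membership are secured, the rest is a routine chain of triangle inequalities and monotone inversions; care is only needed to keep the arguments of $\gb^{-1}$ and $\gga^{-1}$ consistent between parts (a) and (b) and to verify that $c=2\Err_\ell(b)/A_\ell(b)$ genuinely lies in $[0,c_0]$ so that Definition \ref{definition2}(b) is applicable.
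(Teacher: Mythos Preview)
Your proposal is correct and follows essentially the same route as the paper: the paper also establishes the lower bound $|U_x(\wh a_\ell,b,\wh\gl_\ell)|\ge A_\ell(b)-\Err_\ell(b)$ by testing at $(\mu/\phi'_\ell(b),\phi''_\ell(b))$ (whose membership in $\cH_{b,\ell}$ was secured in the proof of Theorem~\ref{theo:adaptive TSC-R1}(c)), combines it with the upper bound from \eqref{U_with_ell} to force $f\ge 1-2\Err_\ell(b)/A_\ell(b)$, and then inverts $\gb,\gga$; parts (b) and (c) in the paper likewise use the decomposition $|\wb g(\eta,\gl')-1|\le 2\pi|\eta|I_1+\pi|\gl'|I_2$ via $|e^{i\theta}-1|\le|\theta|$ and the observation $|\wb g|\le 1$ when $g\ge 0$, exactly as you outline.
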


Note that since $\lim_{\xi\to 1^-}  \gb^{-1}(\xi)=0$ and $\lim_{\xi\to 1^-}  \gga^{-1}(\xi)=0$,  the error bounds $\Bd_{1, \ell}$,  $\Bd_{2, \ell}$,  $\Bd_{3, \ell}$ are small as long as $\Err_\ell(b)$ is small. We will study these error bounds in more details in the next section when $g$ is the Gaussian window function.

As shown in \eqref{phi_est}-\eqref{abs_IA_est}, $\frac \mu{\wh a_\ell(b)}$ is an estimate to $\phi'_\ell(b)$ as shown in \eqref{IF_estimate} and $U_{x}(\wh a_\ell, b, \wh \gl_\ell)$ is the recovered component of $x_\ell(b)$. 
For a real-valued $x_\ell(t)$, we will use 
\begin{eqnarray}
\label{comp_recover_real}
&&x_\ell(b)\approx 2{\rm Re}\Big(U_x(\wh a_\ell,  b, \wh \gl_\ell)\Big).  
\end{eqnarray}
In addition, the chirp rate $\phi^{\gp\gp}_\ell(b)$ and IA $A_\ell(b)$ can be estimated by $\wh \gl_\ell(b)$ and $|U_x(\wh a_{\ell}, b, \wh \gl_\ell)|$ respectively.

\begin{mrem}
Adaptive TSC-R defined by \eqref{def_adaptiveTSC-R} can be extended to adaptive CWLT with a higher order polynomial phase function. More precisely, one may define  
	\begin{eqnarray}
\nonumber	 U_x(a, b, \gl_1, \cdots, \gl_{m}) \hskip -0.6cm  && := \int_{-\infty}^\infty x(t) \frac 1{a}\overline{\psi_{\gs(b)} \big(\frac{t-b}a\big)} e^{ -i2\pi \sum_{\ell=2}^{m+1}\lambda_{\ell-1}\frac{(t-b)^\ell}{\ell!}} dt\\
	\label{def_adaptiveTSC-R_high}
	&& = \int_{\RR} x(b+at)  \frac 1 {\gs(b)} g\big(\frac t {\gs(b)}\big) e^{-i2\pi \mu  t 
	-i2\pi \sum_{\ell=2}^{m+1}\lambda_{\ell-1}\frac{(at)^\ell}{\ell!}}
	 dt. 
	\end{eqnarray}
$U_x(a, b, \gl_1, \cdots, \gl_{m})$ can be used for IF estimation and mode recovery of such a multicomponent signal that  IFs $\phi^{\gp}_k(t)$ and $\phi^{\gp}_\ell(t)$ of two components are \lq\lq{}highly\rq\rq{} crossover at some time $t_0$:  $\phi^{(j)}_k(t_0)=\phi^{(j)}_\ell(t_0), 1\le j\le m$. 
One can establish theorems similar Theorems \ref{theo:adaptive TSC-R1} and \ref{theo:adaptive TSC-R2} for $U_x(a, b, \gl_1, \cdots, \gl_{m})$. 	
\hfill $\blacksquare$
\end{mrem}

\bigskip 
Finally in this section we present the proofs of Theorems \ref{theo:adaptive TSC-R1} and \ref{theo:adaptive TSC-R2}. For simplicity of presentation, we write $\gs$ for $\gs(b)$. 

{\bf Proof of Theorem \ref{theo:adaptive TSC-R1}(a).} Clearly $\cup_{k=1}^K \cH_{b, k}\subseteq \cH_b$. Next we show $\cH_b\subseteq \cup_{k=1}^K \cH_{b, k}$. 

Let $(a, \gl)\in \cH_b$. Suppose $(a, \gl) \not \in \cH_{b, k}$ for any $k$.  Then $(a, b, \gl) \not \in \cup_{k=1}^K Z_k$. 
Hence, by \eqref{def_upper_bounds}, we have 
\begin{eqnarray*}
\big|\fr_x(a, b, \gl)\big| \hskip -0.6cm &&=\Big|\sum_{k=1}^K x_k(b) \wb g\big( \gs(\mu -a \phi'_k(b)), \gs^2a^2 (\gl - \phi''_k(b))\big)\Big|\\
&& \le \sum_{k=1}^K A_k(b) \gUp(b)=M(b)\gUp(b).  
\end{eqnarray*}
This, together with \eqref{S_R_error_interal}, implies 
\begin{eqnarray*}
\big|U_x(a, b, \gl)\big|\hskip -0.6cm && \le \big|U_x(a, b, \gl)-\fr_x(a, b, \gl)\big|+\big|\fr_x(a, b, \gl)\big|\\
&& \le  M(b)\Pi_0(b)+ M(b)\gUp(b) \le \wt \ep_1,   
\end{eqnarray*}
a contradiction to that $(a, \gl)\in \cH_b$. Hence there must exist an $\ell$ such that $(a, \gl)\in \cH_{b,\ell}$. This shows $\cH_b=\cup_{k=1}^K \cH_{b, k}$. 

\bigskip 
{\bf Proof of Theorem \ref{theo:adaptive TSC-R1}(b).}  Observe that $\cH_{b, k}=\cH_b\cap \{(a, \gl): (a, b, \gl)\in Z_ k\}$.  Since  $Z_k, 1\le k \le K$ are disjoint, we conclude that  $\cH_{b, k}, 1\le k \le K$ are also disjoint. 

\bigskip 
{\bf Proof of Theorem \ref{theo:adaptive TSC-R1}(c).} 
To show that each $\cH_{b, \ell}$ is non-empty,  it is enough to show $(\frac{\mu}{\phi'_\ell(b)}, \phi''_\ell(b))\in \cH_b$. Indeed,  with $\wb g(0, 0)=1$, 
\eqref{U_with_ell} with $\eta=\frac{\mu}{\phi'_\ell(b)}, \gl=\phi''_\ell(b)$ 
implies 
\begin{eqnarray*}
&&
\big|U_x(\frac{\mu}{\phi'_\ell(b)}, b, \phi''_\ell(b))\big|\ge \big|x_\ell(b) \wb g(0, 0)\big|- \Err_\ell(b)\\
&& =    A_\ell(b)-M(b)\Pi_\ell(b)-\sum_{k\not= \ell} A_k(b) \gUp_{\ell, k}(b)\\
&& > \nu(b)-M(b)\Pi_0(b)-M(b)\gUp(b)\ge \wt \ep_1. 
\end{eqnarray*}
Thus  $(\frac{\mu}{\phi'_\ell(b)}, \phi''_\ell(b))\in \cH_b$. Hence  $(\frac{\mu}{\phi'_\ell(b)}, \phi''_\ell(b))\in \cH_{b,\ell}$, and $\cH_{b,\ell}$ is non-empty. 
\hfill $\blacksquare$

\bigskip 
{\bf Proof of Theorem \ref{theo:adaptive TSC-R2}(a).} 
From \eqref{U_with_ell}, we have 
\begin{equation}
\label{est_xk1}
\big|U_x(\wh a_\ell, b, \wh \gl_\ell)\big| \le 
\big|x_\ell(b) \wb g\big(\gs(\mu-\wh a_\ell \phi_\ell'(b)), \gs^2\wh a_\ell^2(\wh \gl_\ell-\phi_\ell''(b)) \big)\big|+ \Err_\ell(b). 
\end{equation}
On the other hand, by the definitions of $\wh a_\ell, \wh \gl_\ell$ and by \eqref{U_with_ell} with $a=\frac\mu{\phi_\ell'(b)}, \gl=\phi_\ell''(b)$, we have 
\begin{equation}
\label{est_xk2}
\big|U_x(\wh a_\ell, b, \wh \gl_\ell)\big|  \ge \big|U_x(\frac\mu{\phi_\ell'(b)}, b, \phi_\ell''(b))\big |\ge |x_\ell(b) \wb g(0, 0) \big| - \Err_\ell(b)
=A_\ell(b)-   \Err_\ell(b). 
\end{equation}
This, together with \eqref{est_xk1}, implies 
$$
A_\ell(b) - \Err_\ell(b)\le A_\ell(b) 
\big|\wb g\big(\gs(\mu-\wh a_\ell \phi_\ell'(b)), \gs^2\wh a_\ell^2(\wh \gl_\ell-\phi_\ell''(b)) \big)\big|
+ \Err_\ell(b). 
$$
Thus we have 
\begin{equation}
\label{est_xk3}
1- \frac{2\; \Err_\ell(b)}{A_\ell(b)}\le  f\big(\gs|\mu-\wh a_\ell \phi_\ell'(b)|, \gs^2 \wh a_\ell^2 |\wh \gl_\ell-\phi_\ell''(b)| \big).  
\end{equation}
Since $2\Err_\ell(b)/A_\ell(b)\le c_0$,  \eqref{est_xk3} along with \eqref{ineq_cond} and \eqref{cond_gb_gga}  leads to 
$$
1- \frac{2\; \Err_\ell(b)}{A_\ell(b)}\le \gb \big(\gs\big|\mu-\wh a_\ell \phi_\ell'(b)\big|\big), \quad 
1- \frac{2\; \Err_\ell(b)}{A_\ell(b)}\le \gga \big(\gs^2\wh a_\ell^2 \big|\wh \gl_\ell-\phi_\ell''(b)\big|\big).  
$$
Since $\gb(\xi), \gga(\xi)$ decreasing, we have 
$$
\gs\big| \mu-\wh a_\ell \phi_\ell'(b) \big|\le \gb^{-1}\big(1- \frac{2\; \Err_\ell(b)}{A_\ell(b)}\big), \;
\gs^2\wh a_\ell^2\big| \wh \gl_\ell-\phi_\ell''(b) \big|\le \gga^{-1}\big(1- \frac{2\; \Err_\ell(b)}{A_\ell(b)}\big). 
$$
Thus shows \eqref{phi_est} and \eqref{phi_est_gl}.

\bigskip 

{\bf Proof of Theorem \ref{theo:adaptive TSC-R2}(b).} 
From \eqref{U_with_ell}, we have 
\begin{eqnarray*}
&&\big|U_x(\wh a_\ell, b, \wh\gl_\ell)- x_\ell(b)\big|\le \big|U_x(\wh a_\ell, b, \wh\gl_\ell) - x_\ell(b) \wb g\big( \gs(\mu - \wh a_\ell \phi'_\ell(b)), \gs^2\wh a_\ell^2(\wh \gl_\ell - \phi''_\ell(b))\big)\big|\\
&& \qquad +\big |x_\ell(b) \wb g\big( \gs(\mu - \wh a_\ell \phi'_\ell(b)), \gs^2\wh a_\ell^2(\wh \gl_\ell - \phi''_\ell(b))\big)
- x_\ell(b)\big|
\\
&&\le \Err_\ell(b)+A_\ell(b) \Big| \int_\RR \frac 1\gs g(\frac t\gs)\Big(e^{-i2\pi (\mu - \wh a_\ell \phi'_\ell(b))t -i\pi \wh a_\ell^2(\wh \gl_\ell- \phi_\ell''(b)) t^2}-1\Big) dt \Big|\\
&&\le \Err_\ell(b)+A_\ell(b) \int_\RR \Big| \frac 1\gs g(\frac t\gs)\Big| \; \big|2\pi (\mu - \wh a_\ell \phi'_\ell(b))t +\pi\wh a_\ell^2 (\wh \gl_\ell- \phi_\ell''(b)) t^2 \big| dt \\
&&\le \Err_\ell(b)+A_\ell(b) 2\pi |\mu - \wh a_\ell \phi'_\ell(b)|\int_\RR \Big| \frac 1\gs g(\frac t\gs) t \Big| dt +A_\ell(b)\pi\wh a_\ell^2 \big|\wh \gl_\ell- \phi_\ell''(b)\big| \int_\RR \frac 1\gs \Big|g(\frac t\gs)\Big|  t^2 dt \\
&&= \Err_\ell(b)+A_\ell(b) 2\pi I_1 \gs |\mu-\wh a_\ell\phi_\ell'(b)|+ A_\ell(b)\pi I_2 \gs^2\wh a_\ell^2
\big|\wh \gl_\ell- \phi_\ell''(b)\big|\\
&&\le  \Err_\ell(b)+2\pi I_1 A_\ell(b) \gb^{-1}\big(1-\frac {2 \; \Err_\ell(b)}{A_\ell(b)}\big)+
\pi I_2 A_\ell(b) \gga^{-1}\big(1-\frac {2 \; \Err_\ell(b)}{A_\ell(b)}\big), 
\end{eqnarray*}
where the last inequality follows from \eqref{phi_est} and \eqref{phi_est_gl}. 
This completes the proof of \eqref{comp_xk_est}. 

\bigskip 
{\bf Proof of Theorem \ref{theo:adaptive TSC-R2}(c).}  Note that when $g(t)\ge 0$, by the assumption $\int_\RR g(t)dt=1$, we have that $|\wb g(\eta, \gl)|\le 1$ for any $\eta, \gl\in \R$. This fact, together with 
\eqref{est_xk1}, implies 
\begin{equation*}
\big|U_x(\wh a_\ell, b, \wh \gl_\ell)\big| \le 
A_\ell(b) + \Err_\ell(b). 
\end{equation*}
This and \eqref{est_xk2} lead to \eqref{abs_IA_est}. This completes the proof  of  Theorem \ref{theo:adaptive TSC-R2}(c).
\hfill $\blacksquare$ 

\section{Time, scale and chirp\_rate  signal recovery operator with Gaussian window function} 
The Gaussian function is the only function (up to scalar multiplication, shift and  modulations) which gains the optimal time-frequency resolution. Hence it has been used in many applications. 
In this section we consider the adaptive TSC-R with the window function being the Gaussian function 
and obtain more precise estimates for the error bounds  $\Bd_{1, \ell}$,  $\Bd_{2, \ell}$,  $\Bd_{3, \ell}$ in Theorem 2.  
In the following $g$ is always the Gaussian function given in \eqref{def_g}. 

From \eqref{g_PFT}, we have that 
 $|\wb g(\eta, \gl)|= f(|\eta|, |\gl|)$ with 
 \begin{equation}
 \label{def_f}
 f(\eta, \gl):=\frac 1{(1+4\pi^2\gl^2)^{1/4}} e^{-\frac{2\pi^2 \eta ^2}{1+4\pi^2 \gl^2}} \; .  
 \end{equation}
First one can obtain that  
\begin{equation}
\label{ineq_gaussian}
 |\wb g(\eta, \gl)|\le\min\Big\{\frac 1{(2\pi^2 \eta ^2)^{1/4}}, 
\frac 1{(1+4\pi^2\gl^2)^{1/4}}\Big\}. 
\end{equation}
Indeed, if 
$2\pi^2 \eta ^2\ge 1+4\pi^2 \gl^2$, then 
\begin{eqnarray*}
|\wb g(\eta, \gl)|\hskip -0.6cm &&\le \frac 1{(1+4\pi^2\gl^2)^{1/4}} \frac{1+4\pi^2 \gl^2}{2\pi^2 \eta ^2}
=\frac{(1+4\pi^2 \gl^2)^{3/4}}{2\pi^2 \eta ^2}
\le \frac 1{(2\pi^2 \eta ^2)^{1/4}};
\end{eqnarray*}
otherwise, for  
$2\pi^2 \eta ^2< 1+4\pi^2 \gl^2$, we have  
\begin{eqnarray*}
|\wb g(\eta, \gl)|\hskip -0.6cm &&\le \frac 1{(1+4\pi^2\gl^2)^{1/4}}.
\end{eqnarray*}
Hence \eqref{ineq_gaussian} holds. 

Next let us consider the quantities $\Upsilon(b)$ and $\Upsilon_{\ell, k}(b)$ satisfying  \eqref{def_upper_bounds}. 
Suppose $(a, b, \gl)\not \in Z_k$. By \eqref{ineq_gaussian}, we have 
\begin{eqnarray}
\nonumber &&\big|\wb g\big( \gs (\mu -a \phi'_k(b)), \gs^2a^2 (\gl - \phi''_k(b))\big)\big| \\
\label{Ineq_Gaussian1} 
&&\qquad  \le \min\Big\{\frac 1{(2\pi^2)^{1/4} \sqrt{\gs |\mu -a \phi'_k(b)|}}, 
\frac 1{\big(1+4\pi^2 \gs^4a^4 (\gl - \phi''_k(b))^2 \big)^{1/4}}\Big\}. 
\end{eqnarray}
If $|\mu -a \phi'_k(b)|\ge \gt$, then 
$$
\frac 1{(2\pi^2)^{1/4} \sqrt{\gs |\mu -a \phi'_k(b)|}}\le \frac 1{(2\pi^2)^{1/4}\sqrt \gt  \sqrt{\gs} }; 
$$
otherwise,  if $|\mu -a \phi'_k(b)|<\gt$, then $|\gl - \phi''_k(b))|\ge  \gt_1$.  Therefore, 
$$
\frac 1{\big(1+4\pi^2 \gs^4a^4 (\gl - \phi''_k(b))^2 \big)^{1/4}}\le \frac 1{\sqrt{2\pi \gt_1} a\gs} 
\le \frac 1{\sqrt{2\pi \gt_1} a_1 \gs}. 
$$
Hence, by \eqref{Ineq_Gaussian1}, we have  
\begin{eqnarray}
 \label{Ineq_Gaussian2} &&\big|\wb g\big( \gs (\mu -a \phi'_k(b)), \gs^2a^2 (\gl - \phi''_k(b))\big)\big| 
\le \max\Big\{\frac 1{(2\pi^2)^{1/4}\sqrt \gt  \sqrt{\gs} }, \frac 1{\sqrt{2\pi \gt_1} a_1 \gs}\Big\}. 
 \end{eqnarray}
 Thus we may let 
 $$
 \Upsilon(b)=\frac 1{\sqrt{\gs}\min \big\{(2\pi^2)^{1/4}\sqrt \gt, a_1\sqrt{2\pi \gt_1 \gs} \big\} }.
 $$
 Since $Z_\ell$ and $Z_k$ are not overlapping if $\ell\not= k$, we may simply let $\Upsilon_{\ell, k}(b)=\Upsilon(b)$. 
 For such choice of $\Upsilon(b)$ and $\Upsilon_{\ell, k}(b)$,  \eqref{def_upper_bounds} holds. Note that if $\gs=\gs(b)$ is large, 
 then $\Upsilon(b)$ and $\Upsilon_{\ell, k}(b)$ will be small. 
 
\bigskip 
Next we consider the functions $\gb(\xi), \gga(\xi)$ satisfying \eqref{cond_gb_gga} for $f(\eta, \gl)$ given by 
\eqref{def_f}. Clearly, we may choose 
 \begin{equation}
\label{def_gga}
\gga(\gl)=\frac 1{(1+4\pi^2\gl^2)^{1/4}}. 
\end{equation}
Next we will show that for this $f(\eta, \gl)$, if $c_0$ in \eqref{ineq_cond} satisfies 
$c_0\le 1-e^{-1/4}$, then we can choose 
\begin{equation}
\label{def_gb}
\gb(\eta)=e^{-2\pi^2 \eta ^2}. 
\end{equation}
To this regard, we first have the following two lemmas. 

\begin{lem}\label{lem4}
Let  $f(\eta, \gl)$ be the function defined by \eqref{def_f}. If $0\le\eta\le \frac 1{2\pi \sqrt 2}$, then  
\begin{equation}
\label{ineq_f_1}
f(\eta, \gl)\le f(\eta, 0)=e^{-2\pi^2 \eta ^2}, \; \gl \in [0, \infty). 
\end{equation}
\end{lem}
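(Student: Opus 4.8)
The plan is to treat $f(\eta, \gl)$ as a function of $\gl$ for fixed $\eta$ in the stated range and show it is non-increasing in $|\gl|$, so that the maximum over $\gl \in [0,\infty)$ is attained at $\gl = 0$. Since $f(\eta,\gl)$ depends on $\gl$ only through $\gl^2$, I would set $u := 4\pi^2 \gl^2 \ge 0$ and write $f = (1+u)^{-1/4} \exp\!\big(-\tfrac{2\pi^2\eta^2}{1+u}\big)$, then show this is a decreasing function of $u$ on $[0,\infty)$ whenever $0 \le \eta \le \tfrac{1}{2\pi\sqrt2}$.

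The key computation is the sign of the $u$-derivative. Taking the logarithm, $\log f = -\tfrac14 \log(1+u) - \tfrac{2\pi^2\eta^2}{1+u}$, so
\begin{equation*}
\frac{\partial}{\partial u}\log f = -\frac{1}{4(1+u)} + \frac{2\pi^2\eta^2}{(1+u)^2} = \frac{1}{(1+u)^2}\Big(2\pi^2\eta^2 - \frac{1+u}{4}\Big).
\end{equation*}
For this to be $\le 0$ for all $u \ge 0$ it suffices that $2\pi^2\eta^2 \le \tfrac14$ (the worst case being $u=0$), i.e. $\eta^2 \le \tfrac{1}{8\pi^2}$, i.e. $|\eta| \le \tfrac{1}{2\pi\sqrt2}$ — exactly the hypothesis. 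Hence $\log f$, and therefore $f$, is non-increasing in $u$, so $f(\eta,\gl) \le f(\eta,0)$. Finally, $f(\eta,0) = (1+0)^{-1/4} e^{-2\pi^2\eta^2} = e^{-2\pi^2\eta^2}$, which gives \eqref{ineq_f_1}.

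There is no real obstacle here; the only thing to be careful about is the direction of the inequality defining the admissible range and making sure the bound $2\pi^2\eta^2 \le \tfrac14$ is used at $u = 0$ (where $\tfrac{1+u}{4}$ is smallest), so that the derivative is non-positive on the entire half-line rather than just near $u=0$. One could alternatively avoid calculus entirely: bound $\tfrac{2\pi^2\eta^2}{1+u} \le 2\pi^2\eta^2$ directly, but that only controls the exponential factor and not the algebraic prefactor $(1+u)^{-1/4}$, so the monotonicity argument via the logarithmic derivative is the cleanest route and is what I would write up.
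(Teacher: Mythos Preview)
Your proof is correct and follows essentially the same approach as the paper: both compute the derivative of $f$ (you via $\partial_u \log f$ after the substitution $u=4\pi^2\gl^2$, the paper via $\partial_\gl f$ directly), obtain the monotonicity condition $8\pi^2\eta^2 \le 1+4\pi^2\gl^2$, and observe that the hypothesis $\eta \le \tfrac{1}{2\pi\sqrt2}$ makes this hold for all $\gl\ge 0$. The logarithmic-derivative presentation is slightly cleaner, but the underlying idea is identical.
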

\begin{proof}
One can obtain from $\partial_\gl f(\eta, \gl)$ that for fixed $\eta$, $f(\eta, \gl)$ is a decreasing function in $\gl$ on $\gl\ge 0$ and 
\begin{equation}
\label{small_ineq}
8\pi^2 \eta^2\le 1+4\pi^2 \gl^2.
\end{equation}
Notice that \eqref{small_ineq} holds true for any $\gl\ge 0$ if $0\le\eta\le \frac 1{2\pi \sqrt 2}$. Hence,  \eqref{ineq_f_1} holds true. 
 \end{proof}

\begin{lem}\label{lem5}
 Let $f(\eta, \gl)$ be the function defined by \eqref{def_f}. Let $c$ be a number satisfying $0\le c\le 1-e^{-1/4}$. Then 
 $1-c\le f(\eta, \gl)$ for some $\eta, \gl\ge 0$ implies $0\le\eta\le \frac 1{2\pi \sqrt 2}$. 
 \end{lem}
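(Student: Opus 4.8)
The plan is to argue by contrapositive: assume $\eta > \frac{1}{2\pi\sqrt{2}}$ and show that then $f(\eta,\gl) < 1-c$ for every $\gl \ge 0$, i.e. $f(\eta,\gl) < e^{-1/4}$. First I would recall from \eqref{def_f} that
\[
f(\eta,\gl) = \frac{1}{(1+4\pi^2\gl^2)^{1/4}}\, e^{-\frac{2\pi^2\eta^2}{1+4\pi^2\gl^2}},
\]
so both factors are at most $1$, and in particular $f(\eta,\gl) \le e^{-\frac{2\pi^2\eta^2}{1+4\pi^2\gl^2}}$. The difficulty is that the exponent $\frac{2\pi^2\eta^2}{1+4\pi^2\gl^2}$ decays to $0$ as $\gl \to \infty$, so this bound alone does not force $f$ below $e^{-1/4}$ uniformly in $\gl$; one must use the prefactor $(1+4\pi^2\gl^2)^{-1/4}$ to compensate for large $\gl$.

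The key step is therefore to split on the size of $\gl$. Set $s := 1 + 4\pi^2\gl^2 \ge 1$, so that $f(\eta,\gl) = s^{-1/4} e^{-2\pi^2\eta^2/s}$, and under the assumption $\eta^2 > \frac{1}{8\pi^2}$ we get $2\pi^2\eta^2 > \frac14$, hence $f(\eta,\gl) < s^{-1/4} e^{-1/(4s)} =: h(s)$. It now suffices to show $h(s) \le e^{-1/4}$ for all $s \ge 1$. For $s = 1$ this is exactly $h(1) = e^{-1/4}$; for $s > 1$ I would check that $h$ is decreasing, e.g. by computing $\frac{d}{ds}\log h(s) = -\frac{1}{4s} + \frac{1}{4s^2} = \frac{1-s}{4s^2} \le 0$ for $s \ge 1$. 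This yields $f(\eta,\gl) < h(s) \le h(1) = e^{-1/4} \le 1-c$, contradicting $1-c \le f(\eta,\gl)$. Hence the assumption $\eta > \frac{1}{2\pi\sqrt2}$ is impossible, which is the claim.

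The main obstacle is recognizing that one cannot simply bound $f$ by its value at $\gl = 0$ (that is what Lemma \ref{lem4} does, but only on the range $0 \le \eta \le \frac{1}{2\pi\sqrt2}$, which is precisely the conclusion we are trying to reach, not a hypothesis we may use here); instead the prefactor must be exploited, and the whole argument reduces to the one-variable monotonicity of $h(s) = s^{-1/4}e^{-1/(4s)}$ on $[1,\infty)$. Everything else is routine. An alternative to the monotonicity computation, if one prefers to avoid calculus, is to note $s^{-1/4} \le e^{-(s-1)/(4s)}$ (since $\log s \ge (s-1)/s$ for $s \ge 1$, so $\frac14\log s \ge \frac{s-1}{4s}$), whence $h(s) \le e^{-(s-1)/(4s) - 1/(4s)} = e^{-1/4}$ directly.
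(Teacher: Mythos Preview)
Your argument is correct. The overall strategy matches the paper's --- assume $\eta > \frac{1}{2\pi\sqrt2}$ and show this forces $f(\eta,\gl) < e^{-1/4}\le 1-c$ --- but the execution differs. The paper splits into the two cases $1+4\pi^2\gl^2 \le 8\pi^2\eta^2$ and $1+4\pi^2\gl^2 > 8\pi^2\eta^2$: in the first it bounds the exponential factor by $e^{-1/4}$ directly, and in the second it invokes the monotonicity of $f(\eta,\cdot)$ in $\gl$ (from the proof of Lemma~\ref{lem4}) to reduce to the boundary point $\gl_0$ where $1+4\pi^2\gl_0^2 = 8\pi^2\eta^2$. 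Your substitution $s = 1+4\pi^2\gl^2$ and the bound $f(\eta,\gl) < h(s) = s^{-1/4}e^{-1/(4s)}$, followed by the one-line monotonicity check $(\log h)'(s) = (1-s)/(4s^2)\le 0$, collapses both cases into a single step and makes the proof self-contained, with no appeal to Lemma~\ref{lem4}. The alternative route via $\log s \ge (s-1)/s$ that you sketch is also valid and yields $h(s)\le e^{-1/4}$ without any differentiation.
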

 \begin{proof}
 Assume $\eta > \frac 1{2\pi \sqrt 2}$. Let $\gl\ge 0$. If $1+4\pi ^2 \gl^2 \le 8\pi ^2 \eta^2$, then 
 $$
 f(\eta, \gl)\le \frac 1{(1+4\pi^2\gl^2)^{1/4}} e^{-1/4} <e^{-1/4}. 
 $$
Otherwise, when $1+4\pi ^2 \gl^2 > 8\pi ^2 \eta^2$, let $\gl_0>0$ be the number such that 
$1+4\pi ^2 \gl_0^2 =8\pi ^2 \eta^2$. As mentioned in the proof of Lemma \ref{lem4}, $f(\eta, \gl)$ is a decreasing function in $\gl$ for $8\pi^2 \eta^2\le 1+4\pi^2 \gl^2$. Since $\gl_0<\gl$, we have 
$$f(\eta, \gl)\le f(\eta, \gl_0)=\frac 1{(8\pi^2 \eta^2)^{1/4}}e^{-1/4}<e^{-1/4}. 
$$
So in either case, we have $f(\eta, \gl)<e^{-1/4}$, a contradiction to that 
$$
f(\eta, \gl)\ge 1-c\ge e^{-1/4}. 
$$
 Therefore $\eta \le \frac 1{2\pi \sqrt 2}$. This completes the proof of Lemma \ref{lem5}. 
 \end{proof}

Lemmas \ref{lem4} and \ref{lem5} immediately lead to the following proposition. 
\begin{pro}
 Let $f(\eta, \gl)$, $\gb(\eta)$ and $\gga(\gl)$ be the functions defined by \eqref{def_f}, \eqref{def_gb} and \eqref{def_gga} respectively. Suppose $c$ satisfies $0\le c\le 1-e^{-1/4}$. Then  $1-c\le f(\eta, \gl)$ implies  
 $$
 1-c \le \gb(\eta), \; 1-c\le \gga(\gl). 
 $$
 \end{pro}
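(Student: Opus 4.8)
The plan is to combine Lemmas \ref{lem4} and \ref{lem5} directly, with essentially no new computation. Assume $c$ satisfies $0 \le c \le 1 - e^{-1/4}$ and suppose $1 - c \le f(\eta, \gl)$ for some $\eta, \gl \ge 0$. The goal is to establish the two inequalities $1 - c \le \gb(\eta)$ and $1 - c \le \gga(\gl)$ with $\gb, \gga$ as in \eqref{def_gb}, \eqref{def_gga}.

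For the $\gga$ inequality, I would simply observe that $f(\eta, \gl) \le f(0, \gl) = \tfrac{1}{(1+4\pi^2\gl^2)^{1/4}} = \gga(\gl)$, which is immediate from the defining formula \eqref{def_f} since the exponential factor $e^{-2\pi^2\eta^2/(1+4\pi^2\gl^2)}$ is at most $1$ and equals $1$ when $\eta = 0$. Chaining this with the hypothesis gives $1 - c \le f(\eta, \gl) \le \gga(\gl)$ at once; this needs no restriction on $c$ beyond what is assumed.

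For the $\gb$ inequality, I would first invoke Lemma \ref{lem5}: since $0 \le c \le 1 - e^{-1/4}$ and $1 - c \le f(\eta, \gl)$, we get $0 \le \eta \le \tfrac{1}{2\pi\sqrt 2}$. With $\eta$ now in that range, Lemma \ref{lem4} applies and yields $f(\eta, \gl) \le f(\eta, 0) = e^{-2\pi^2\eta^2} = \gb(\eta)$. Combining with the hypothesis again, $1 - c \le f(\eta, \gl) \le \gb(\eta)$. This settles both claims.

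There is essentially no obstacle here, since all the analytic work has been isolated into the two preceding lemmas; the proposition is just their conjunction. The only point requiring a moment's care is making sure the hypothesis $c \le 1 - e^{-1/4}$ is exactly the one needed to apply Lemma \ref{lem5} (it is, verbatim), and noting that the $\gga$ bound is in fact unconditional on $c$ while the $\gb$ bound is where the constant $c_0 = 1 - e^{-1/4}$ enters — this is precisely the value of $c_0$ for which the Gaussian window qualifies as an admissible window function in the sense of Definition \ref{definition2}. I would close with a one-line remark to that effect so the reader sees why $c_0 \le 1 - e^{-1/4}$ is the relevant threshold in all subsequent error-bound estimates.
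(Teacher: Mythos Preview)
Your proposal is correct and matches the paper's own proof essentially line for line: the $\gga$ bound follows from $f(\eta,\gl)\le\gga(\gl)$, and the $\gb$ bound follows by applying Lemma~\ref{lem5} to force $\eta\le\tfrac{1}{2\pi\sqrt2}$ and then Lemma~\ref{lem4} to obtain $f(\eta,\gl)\le\gb(\eta)$. The additional commentary you propose about $c_0=1-e^{-1/4}$ and Definition~\ref{definition2} is accurate and helpful context, though the paper's proof omits it.
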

\begin{proof} Clearly $1-c\le \gga(\gl)$ since $f(\eta, \gl)\le \gga(\gl)$.
By Lemma \ref{lem5}, $1-c\le f(\eta, \gl)$ implies $\eta \le \frac 1{2\pi \sqrt 2}$. This, together with Lemma \ref{lem4}, implies 
$$
f(\eta, \gl)\le f(\eta, 0)= \gb(\eta).
$$
Thus $1-c\le f(\eta, \gl)\le  \gb(\eta)$, as desired.
 \end{proof}

 For $\gga(\gl)$ given by \eqref{def_gga}, its inverse $\gga^{-1}(\xi)$ is given by 
 $$
 \gga^{-1}(\xi)=\frac 1{2\pi \xi^2}\sqrt{1-\xi^4}. 
 $$
Hence the error bound $\Bd_{2, \ell}$ in \eqref{phi_est_gl} is given by 
\begin{eqnarray} 
\nonumber \Bd_{2, \ell}\hskip -0.6cm &&:=\frac{1}{\gs^2(b)} \gga^{-1}\big(1- \frac{2\; \Err_\ell(b)}{A_\ell(b)}\big)\\
\label{B2_est} &&=
 \frac 1{\gs^2(b) 2\pi \big(1-\frac{2 \; \Err_\ell(b)}{A_\ell(b)}\big)^2}\sqrt{1-\big(1-\frac{2 \; \Err_\ell(b)}{A_\ell(b)}\big)^4} \; . 
\end{eqnarray}
Hence, if  $\frac{\Err_\ell(b)}{A_\ell(b)}\approx 0$, then 
$$
\Bd_{2, \ell}\approx \frac {\sqrt 2}{\pi\gs^2(b)} \sqrt{\frac{\Err_\ell(b)}{A_\ell(b)}} \; .
$$

\bigskip 
The inverse function $\gb^{-1}(\xi)$ of $\gb(\gl)$ given by \eqref{def_gb} is 
 $$
 \gb^{-1}(\xi)=\frac1{\pi \sqrt 2} \sqrt{-\ln \xi}, \;  0<\xi <1. 
 $$
Thus if $\frac{2 \; \Err_\ell(b)}{A_\ell(b)}\le 1-e^{-1/4}$, 
then by \eqref{est_xk3} and Proposition 1, we have 
\begin{eqnarray*}
&&|\mu - \wh a_{\ell}(b)\phi_{\ell}'(b)|\le \Bd_{1, \ell}:=\frac{1}{\gs(b)} \gb^{-1}\big(1-\frac {2 \; \Err_\ell(b)}{A_\ell(b)}\big)\\
&& = \frac{1}{\gs(b) {\pi \sqrt 2}} \sqrt{-\ln \big(1-\frac {2 \; \Err_\ell(b)}{A_\ell(b)}\big)}.
\end{eqnarray*}
Using the fact $-\ln(1-t)< e^{1/4}t$ for $0<t<1-e^{-1/4}$, we have 
\begin{equation}
\label{B1_est}
\Bd_{1, \ell}\le \frac{e^{1/8}}{\gs(b)\pi } \sqrt{\frac{\Err_\ell(b)}{A_\ell(b)}}. 
\end{equation}
In addition, the error bound $\Bd_{3, \ell}$ in \eqref{comp_xk_est} for component recovery is bounded by 
\begin{equation}
\label{B3_est}
  \Bd_{3, \ell}\le \Err_\ell(b)+2 e^{1/8} I_1 \sqrt{{\Err_\ell(b)}{A_\ell(b)}}
+\frac{I_2 A_\ell(b)}{2\big(1-\frac{2 \; \Err_\ell(b)}{A_\ell(b)}\big)^2}\sqrt{1-\big(1-\frac{2 \; \Err_\ell(b)}{A_\ell(b)}\big)^4}\; . 
\end{equation}

To summarize, we have the following theorem. 
\begin{theo}\label{theo:adaptive TSC-R3}
Let $x(t)\in \cE_{\ep_1,\ep_3}$ for some $\ep_1>0,\ep_3>0$, and $U_x(a, b, \gl)$ be the adaptive TSC-R 
of $x(t)$ with Gaussian window function $g$ in \eqref{def_g}.  Suppose \eqref{def_sep_cond_cros} and \eqref{theo1_cond1} hold and that for $1\le \ell \le K$, $2\Err_\ell(b)/A_\ell(b)\le 1-e^{-1/4}$. 
Let $\cH_b$ and $\cH_{b, k}$ be the sets defined by \eqref{def_cGk} for some 
$\wt \ep_1$ satisfying  \eqref{cond_ep1}. Let $\wh a_\ell(b), \wh \gl_\ell(b)$ be the functions defined by \eqref{def_max_eta}. Then \eqref{phi_est}, \eqref{phi_est_gl} and \eqref{comp_xk_est} hold with 
$\Bd_{1, \ell}, \Bd_{2, \ell}$ and $\Bd_{3, \ell}$ bounded by the quantities in \eqref{B1_est}, \eqref{B2_est} and \eqref{B3_est} respectively. 
\end{theo}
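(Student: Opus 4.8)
The plan is to deduce Theorem~\ref{theo:adaptive TSC-R3} as a special case of Theorem~\ref{theo:adaptive TSC-R2}. The only genuinely new ingredient needed is the verification that the Gaussian $g$ of \eqref{def_g} is an admissible window function in the sense of Definition~\ref{definition2} \emph{with the specific constant $c_0 = 1-e^{-1/4}$} and with the auxiliary functions $\gb,\gga$ of \eqref{def_gb}, \eqref{def_gga}; once that is in place, the statements \eqref{phi_est}, \eqref{phi_est_gl}, \eqref{comp_xk_est} follow at once, and all that remains is to rewrite the abstract bounds $\Bd_{1,\ell},\Bd_{2,\ell},\Bd_{3,\ell}$ in closed form. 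I expect the delicate point to be the calibration of the threshold $c_0$ in the admissibility check rather than anything in the final computation.

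\textbf{Admissibility of the Gaussian.} The normalization $\int g=1$, membership $g\in L_2(\R)$, and decay at $\infty$ are trivial for the Gaussian. Condition~(a) of Definition~\ref{definition2} is immediate from \eqref{g_PFT}: $|\wb g(\eta,\gl)| = f(|\eta|,|\gl|)$ with $f$ as in \eqref{def_f}. For condition~(b) I would take $\gga(\gl)=(1+4\pi^2\gl^2)^{-1/4}$ and $\gb(\eta)=e^{-2\pi^2\eta^2}$; both are strictly decreasing, continuous and nonnegative on $[0,\infty)$ with $\gb(0)=\gga(0)=1$. Since $f(\eta,\gl)\le\gga(\gl)$ pointwise, the hypothesis $1-c\le f(\eta,\gl)$ gives $1-c\le\gga(\gl)$ for free. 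The bound $1-c\le\gb(\eta)$ is the substantive claim and is exactly Proposition~1, which rests on Lemmas~\ref{lem4}--\ref{lem5}: for $0\le c\le 1-e^{-1/4}$, Lemma~\ref{lem5} forces $0\le\eta\le\frac1{2\pi\sqrt2}$ (otherwise one splits on whether $1+4\pi^2\gl^2\le 8\pi^2\eta^2$ and uses that $f(\eta,\cdot)$ is decreasing on the region $8\pi^2\eta^2\le 1+4\pi^2\gl^2$ identified in Lemma~\ref{lem4} to conclude $f(\eta,\gl)<e^{-1/4}\le 1-c$, a contradiction), and on that $\eta$-range Lemma~\ref{lem4} gives $f(\eta,\gl)\le f(\eta,0)=\gb(\eta)$. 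Hence $c_0 = 1-e^{-1/4}$ is admissible. The crux here is that once $\eta$ is constrained to $[0,\tfrac1{2\pi\sqrt2}]$, the monotonicity region $8\pi^2\eta^2\le 1+4\pi^2\gl^2$ actually covers \emph{all} $\gl\ge0$, and $e^{-1/4}$ is precisely the value engineered to make this work.

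\textbf{Invoking Theorem~\ref{theo:adaptive TSC-R2}.} The hypotheses assumed in Theorem~\ref{theo:adaptive TSC-R3} — the separation condition \eqref{def_sep_cond_cros}, condition \eqref{theo1_cond1}, and $2\Err_\ell(b)/A_\ell(b)\le 1-e^{-1/4}$ for each $\ell$ — are exactly the hypotheses of Theorem~\ref{theo:adaptive TSC-R2} once we use the admissible window from the previous paragraph with $c_0 = 1-e^{-1/4}$. Therefore \eqref{phi_est}, \eqref{phi_est_gl}, \eqref{comp_xk_est} hold with the $\Bd_{j,\ell}$ of Theorem~\ref{theo:adaptive TSC-R2}. (Since the Gaussian is also nonnegative, Theorem~\ref{theo:adaptive TSC-R2}(c) additionally yields \eqref{abs_IA_est}.)

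\textbf{Closed-form bounds.} Inverting $\gga$ gives $\gga^{-1}(\xi)=\frac1{2\pi\xi^2}\sqrt{1-\xi^4}$; substituting $\xi = 1-2\Err_\ell(b)/A_\ell(b)$ into $\Bd_{2,\ell}$ produces \eqref{B2_est}. Inverting $\gb$ gives $\gb^{-1}(\xi)=\frac1{\pi\sqrt2}\sqrt{-\ln\xi}$; the same substitution puts $\Bd_{1,\ell}$ in closed form, and the elementary inequality $-\ln(1-t)<e^{1/4}t$ for $0<t<1-e^{-1/4}$ — checked by noting that $t\mapsto e^{1/4}t+\ln(1-t)$ vanishes at $0$ and has positive derivative on that interval — upgrades it to \eqref{B1_est}. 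Applying both substitutions term-by-term to $\Bd_{3,\ell}$ (the log inequality on the $\gb^{-1}$ term, $\gga^{-1}$ kept in closed form on the other) delivers \eqref{B3_est}, which completes the proof. This last step is routine bookkeeping; the real work has already been done in Lemmas~\ref{lem4}--\ref{lem5} and Proposition~1.
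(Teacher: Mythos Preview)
Your proposal is correct and follows essentially the same route as the paper: the paper's ``proof'' of Theorem~\ref{theo:adaptive TSC-R3} is precisely the preceding discussion in Section~3, which verifies admissibility of the Gaussian with $c_0=1-e^{-1/4}$ via Lemmas~\ref{lem4}--\ref{lem5} and Proposition~1, invokes Theorem~\ref{theo:adaptive TSC-R2}, and then computes $\gb^{-1},\gga^{-1}$ to obtain the explicit bounds \eqref{B1_est}--\eqref{B3_est}. Your verification of the elementary inequality $-\ln(1-t)<e^{1/4}t$ is slightly more detailed than what the paper records, but the argument is the same.
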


\section{Experiments} 

\begin{figure}[th]
	\centering
	\hspace{-0.7cm}
	\begin{tabular}{c@{\hskip -0.2cm}c @{\hskip -0.2cm}c}
		\resizebox{2.2in}{1.65in}{\includegraphics{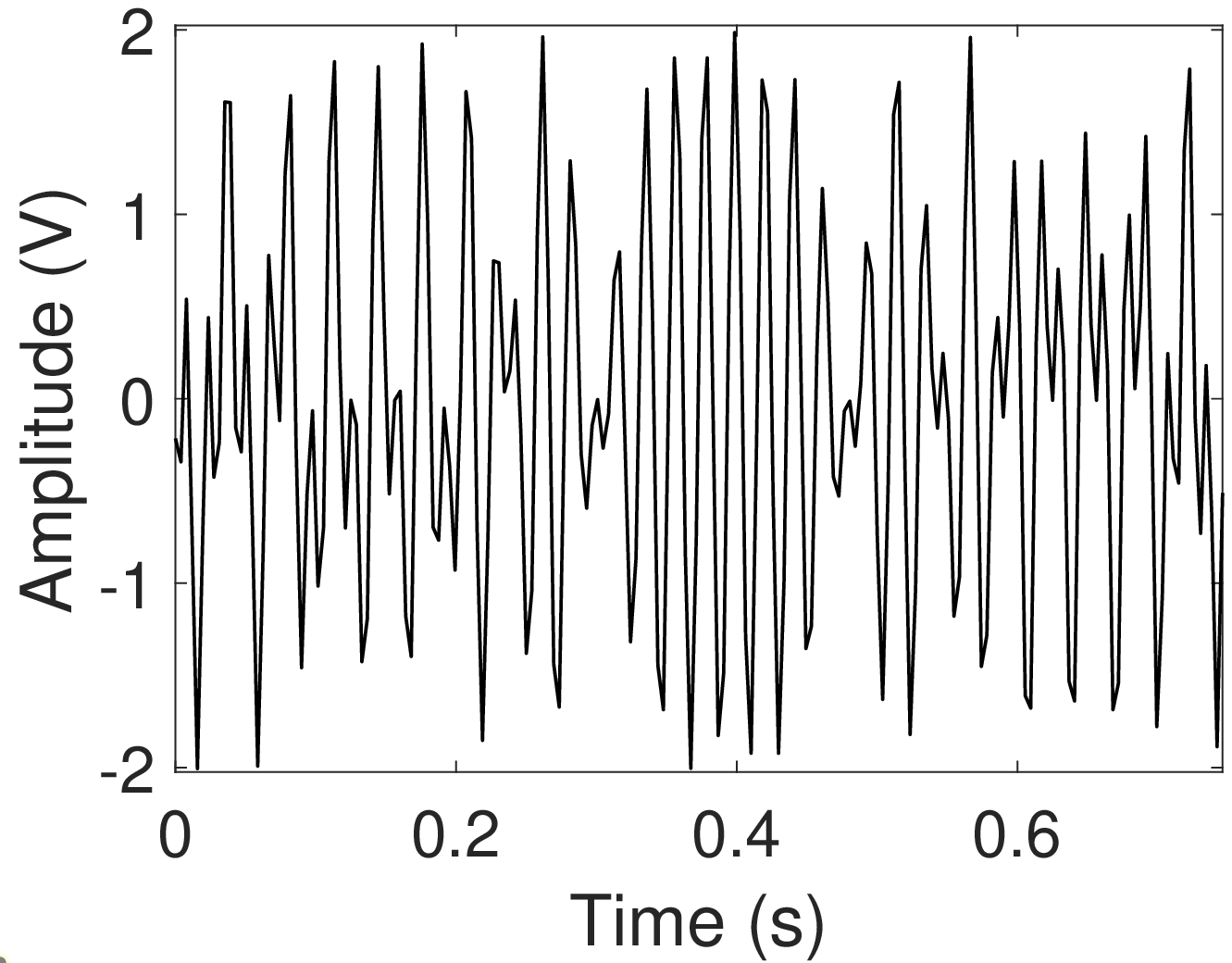}} \quad &
		\resizebox{2.2in}{1.65in}{\includegraphics{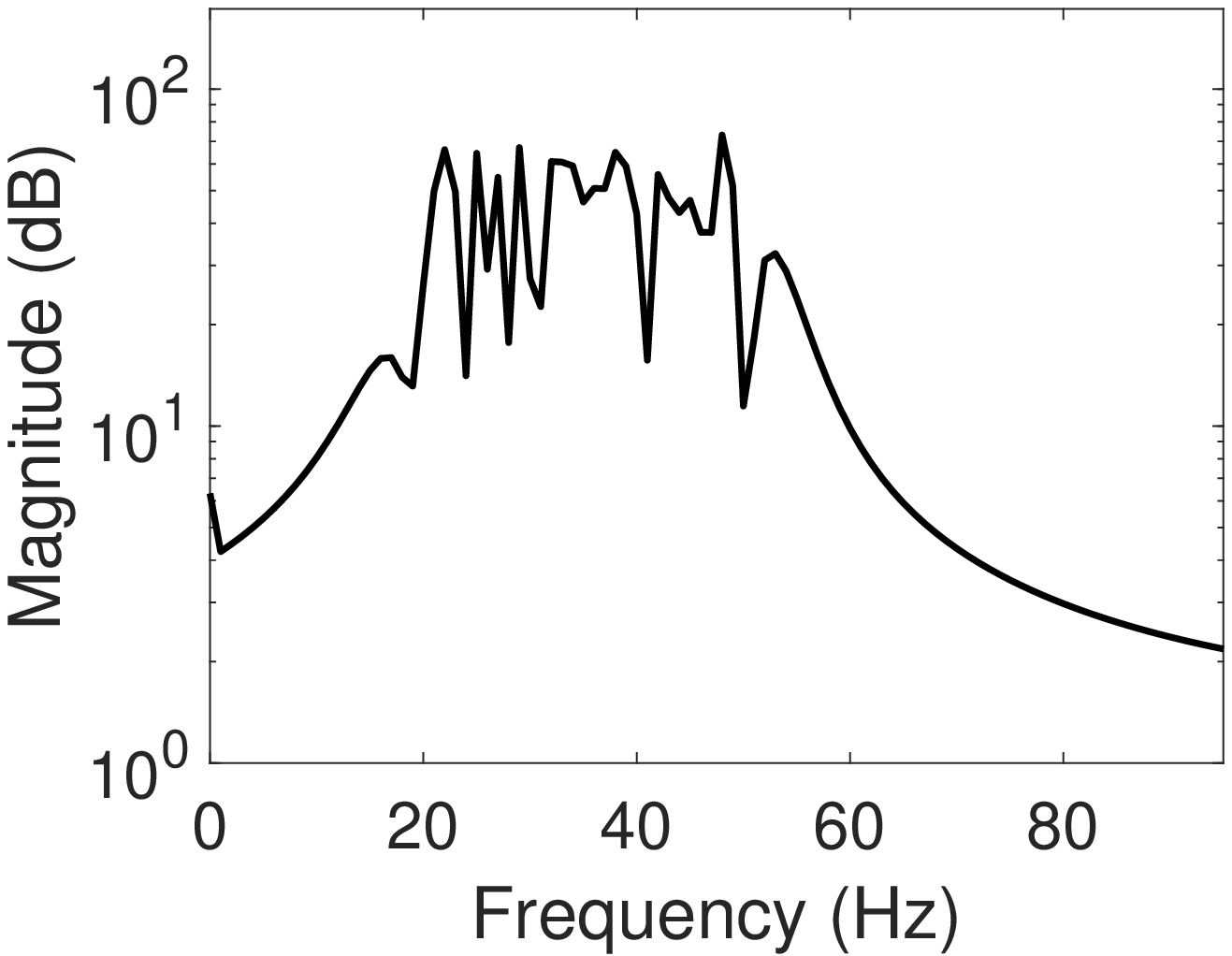}}\quad & 
		\resizebox{2.2in}{1.65in}{\includegraphics{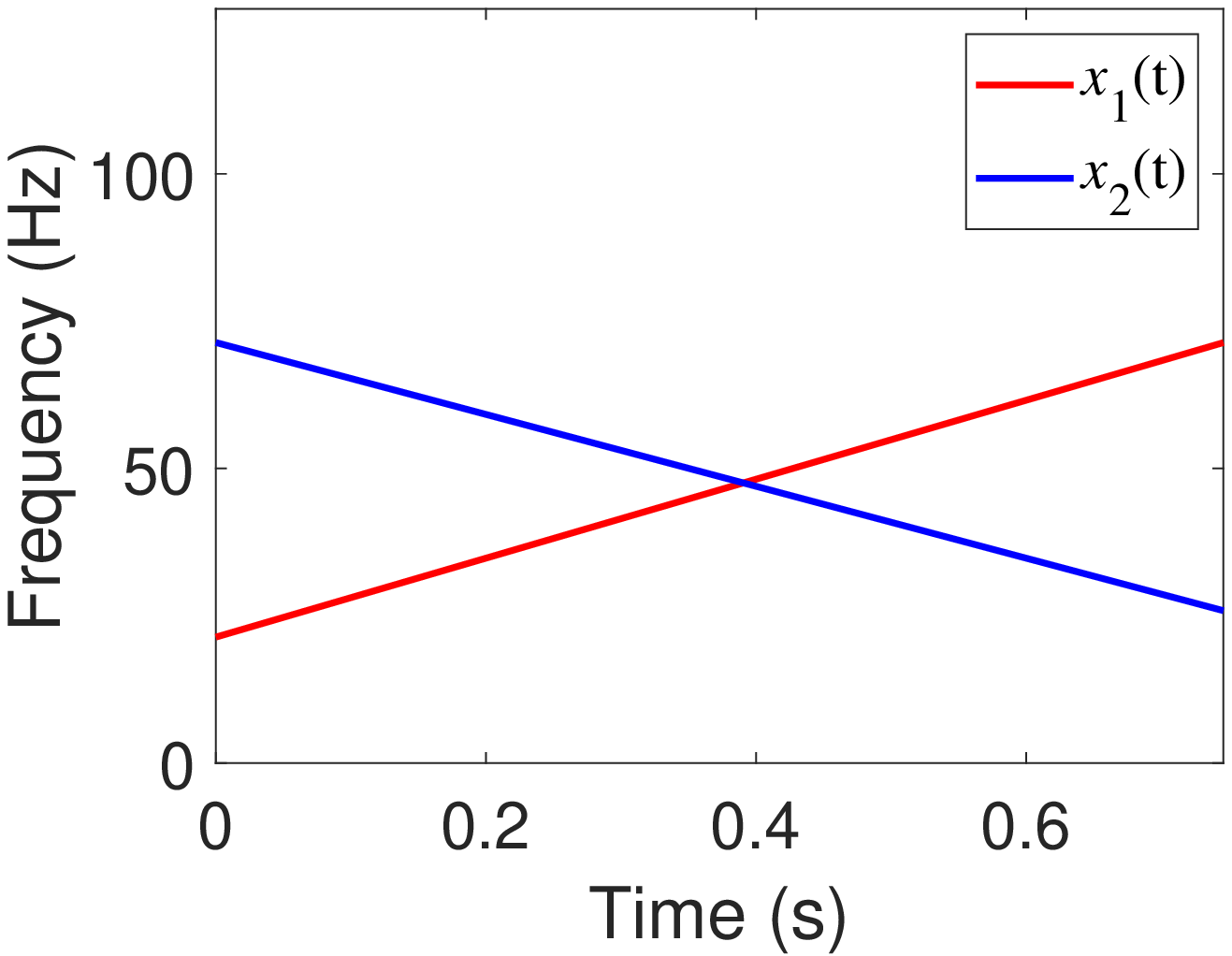}}\\ 
		\resizebox{2.2in}{1.65in}{\includegraphics{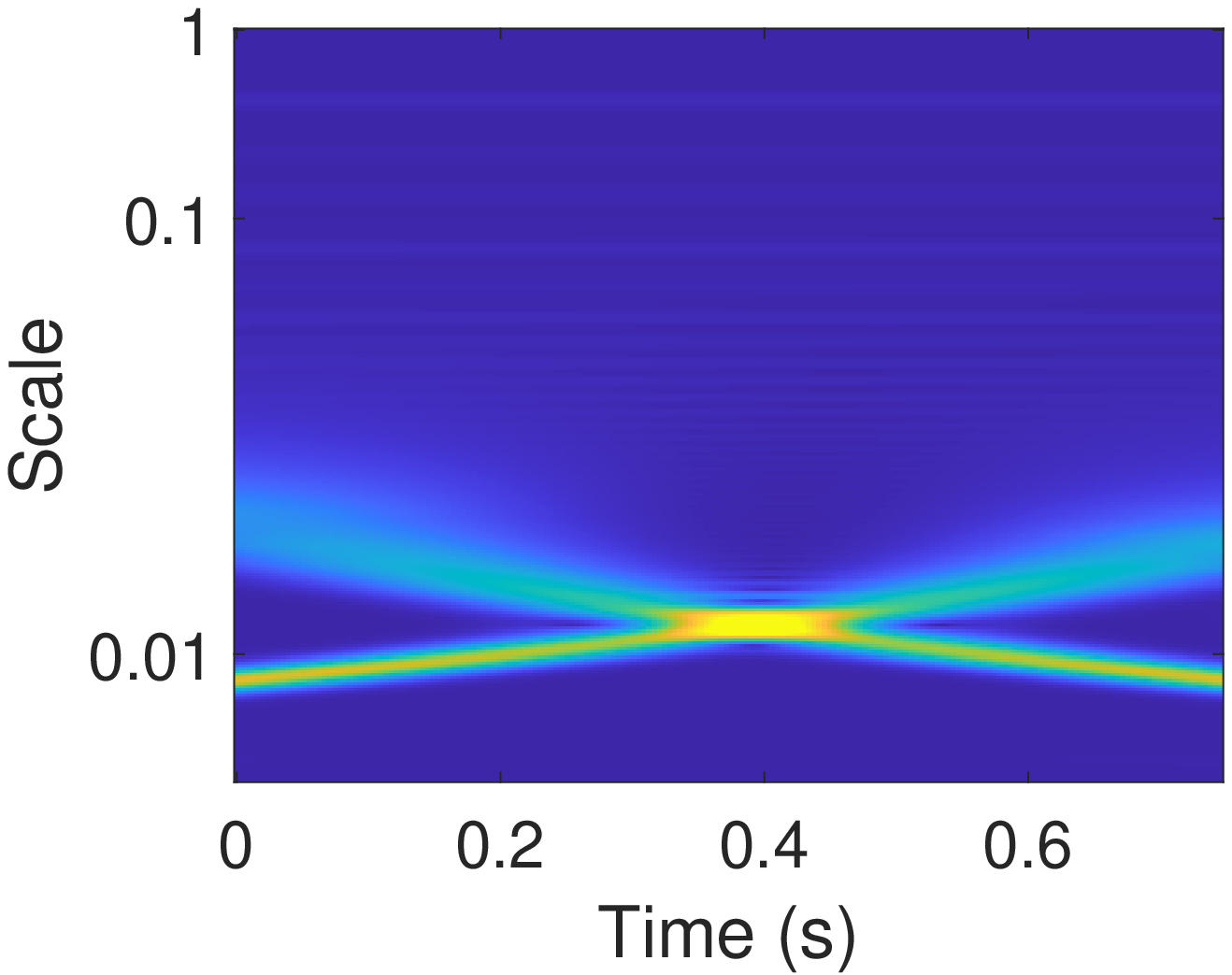}}\quad & 
		\resizebox{2.2in}{1.65in}{\includegraphics{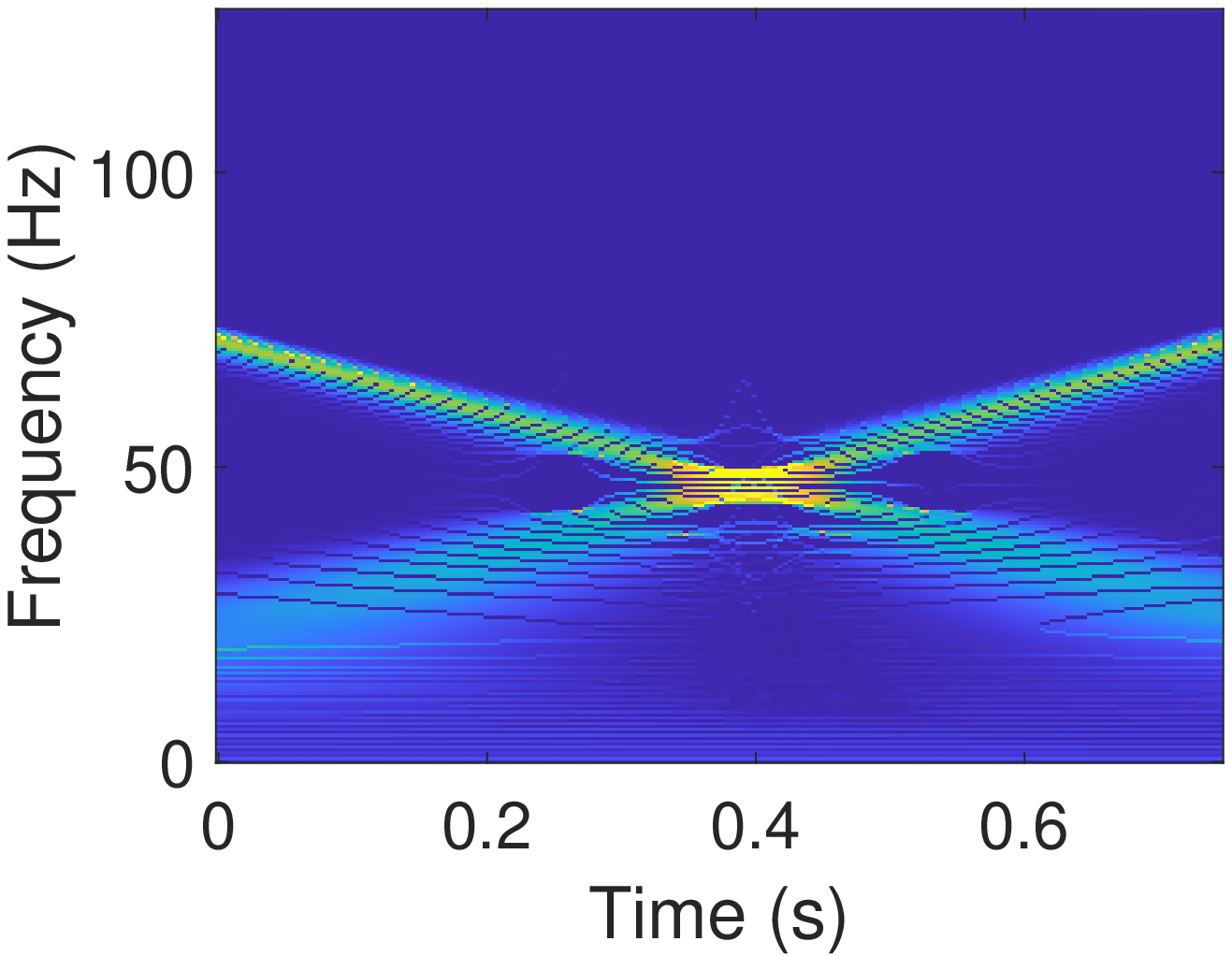}} \quad & 
		\resizebox{2.2in}{1.65in}{\includegraphics{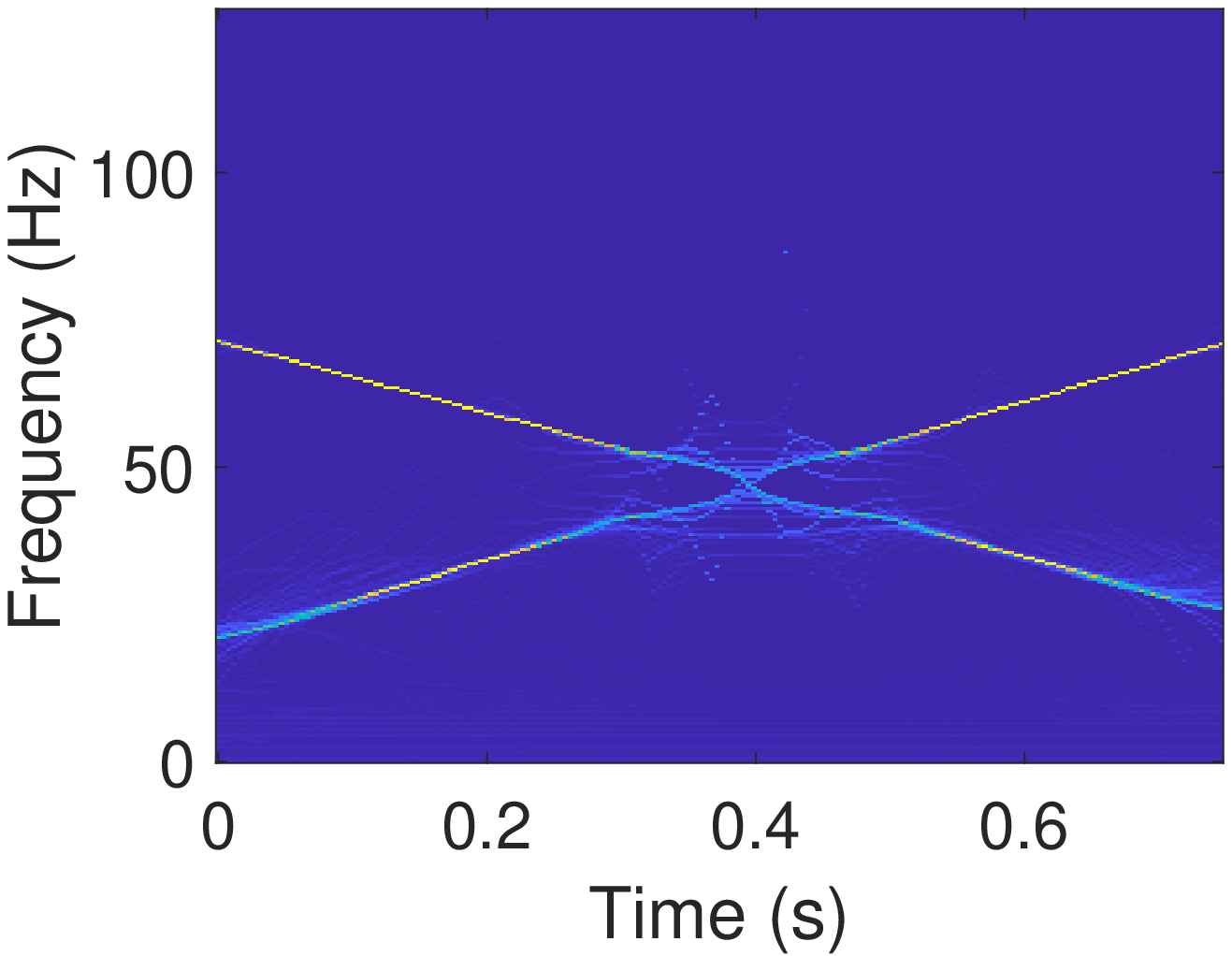}}
	\end{tabular}
	\caption{\small Two-component signal $x(t)$ in \eqref{two_lfm} and its time-frequency representations with SST. 
		Top row (from left to right):  Waveform of $x(t)$, magnitude spectrum and ground truth IFs of two components $x_1(t)$ and $x_2(t)$; Bottom row (from left to right): CWLT, CWT-based SST and CWT-based second-order SST.}
	\label{fig:two_chirp_signal}
\end{figure}

In this section we provide some experimental results to demonstrate our method and general theory. We set $\mu=1$. 

\begin{example}
{\rm Let $x(t)$ be a signal consisting of two-component linear chirps, given as 
\begin{equation}
\label{two_lfm}
x(t) = x_1(t)+x_2(t) = \cos(2\pi c_1 t+ \pi r_1 t^2) + \cos(2\pi c_2 t + \pi r_2 t^2), \; t\in[0,0.75), 
\end{equation}
where $c_1 = 21$, $c_2 = 71$, $r_1 = 67$ and $r_2 = -61$. }
\end{example}

The IFs of $x_1$ and  $x_2$ are $\phi_1'(t) = c_1+r_1t$ and $\phi_2'(t) = c_2+r_2t$, respectively. See the top-right panel of Figure 2.  
The chirp rates of $x_1$ and  $x_2$ are $\phi_1''(t) = r_1$ and $\phi_2''(t) = r_2$, respectively.
Here signal $x(t)$ is discretized with sampling rate 256Hz.  
That means there are 192 samples for $x(t)$. In the following, we just use these 192 samples to analyze the signal.  The waveform of $x(t)$ and its magnitude spectrum are presented in the top row of Fig.\ref{fig:two_chirp_signal}. 

The bottom row of Fig.\ref{fig:two_chirp_signal} shows the results of CWLT, SST \cite{Daub_Lu_Wu11} and the second-order SST \cite{OM17}, where parameter $\sigma=0.023$. Here the scale variable $a$ is discretized as $\big(2^{j/n_v} \gt t\big)_j$, where $\gt t=1/256$ for this example,  and $n_v$ is the number of voice. Here and below we set $n_v=32$. 
Due to the IF curves of the components are crossover, these methods cannot represent the synthetic signal sharply and separately. In addition, EMD performs poorly in decompose this signal. Consequently, these methods are hardly to recover this two-component signal with crossover IFs.

\begin{figure}[th]
	\centering
	\hspace{-0.7cm}
	\begin{tabular}{cc }
			\resizebox{2.2in}{1.65in}{\includegraphics{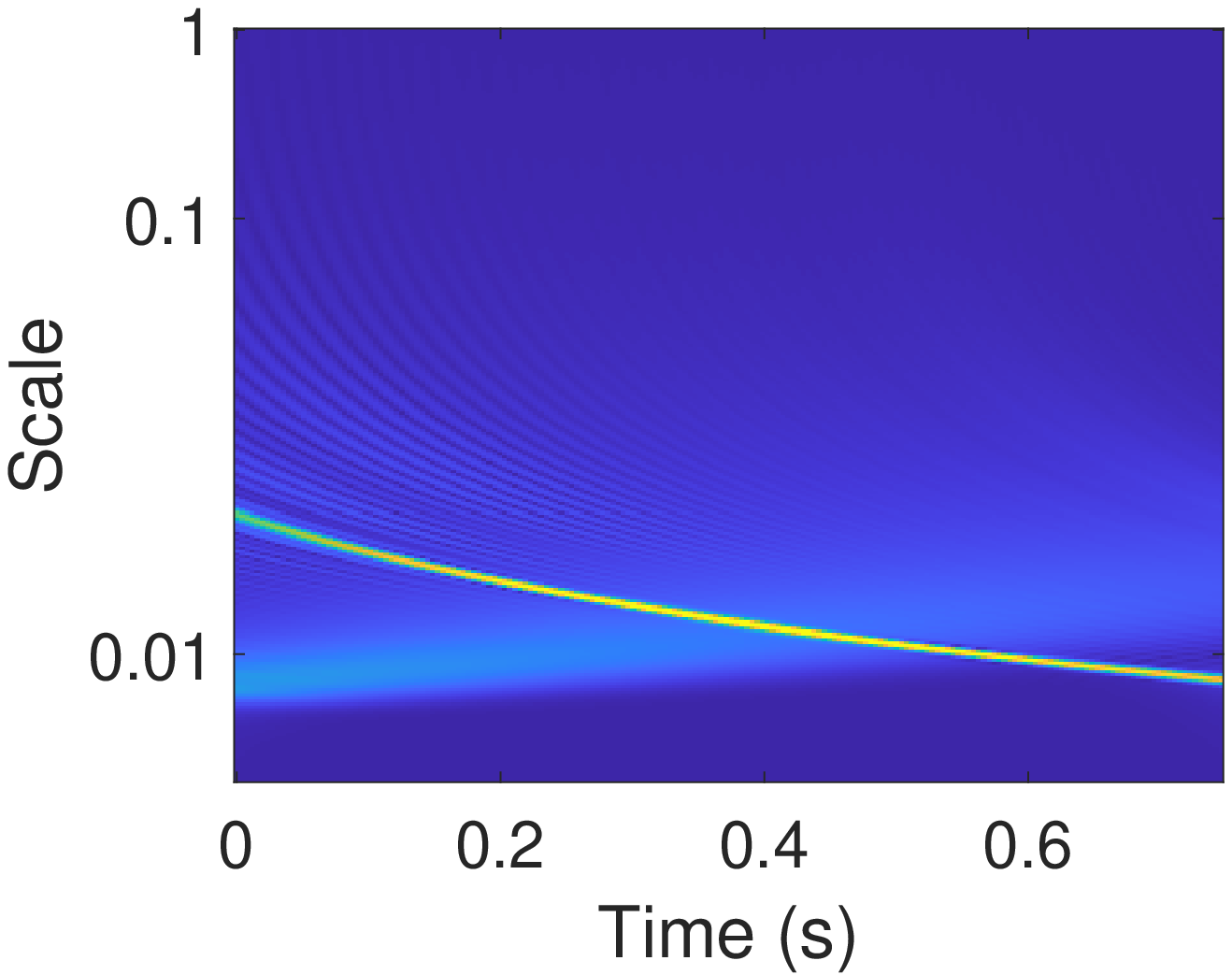}} \quad &
		\resizebox{2.2in}{1.65in}{\includegraphics{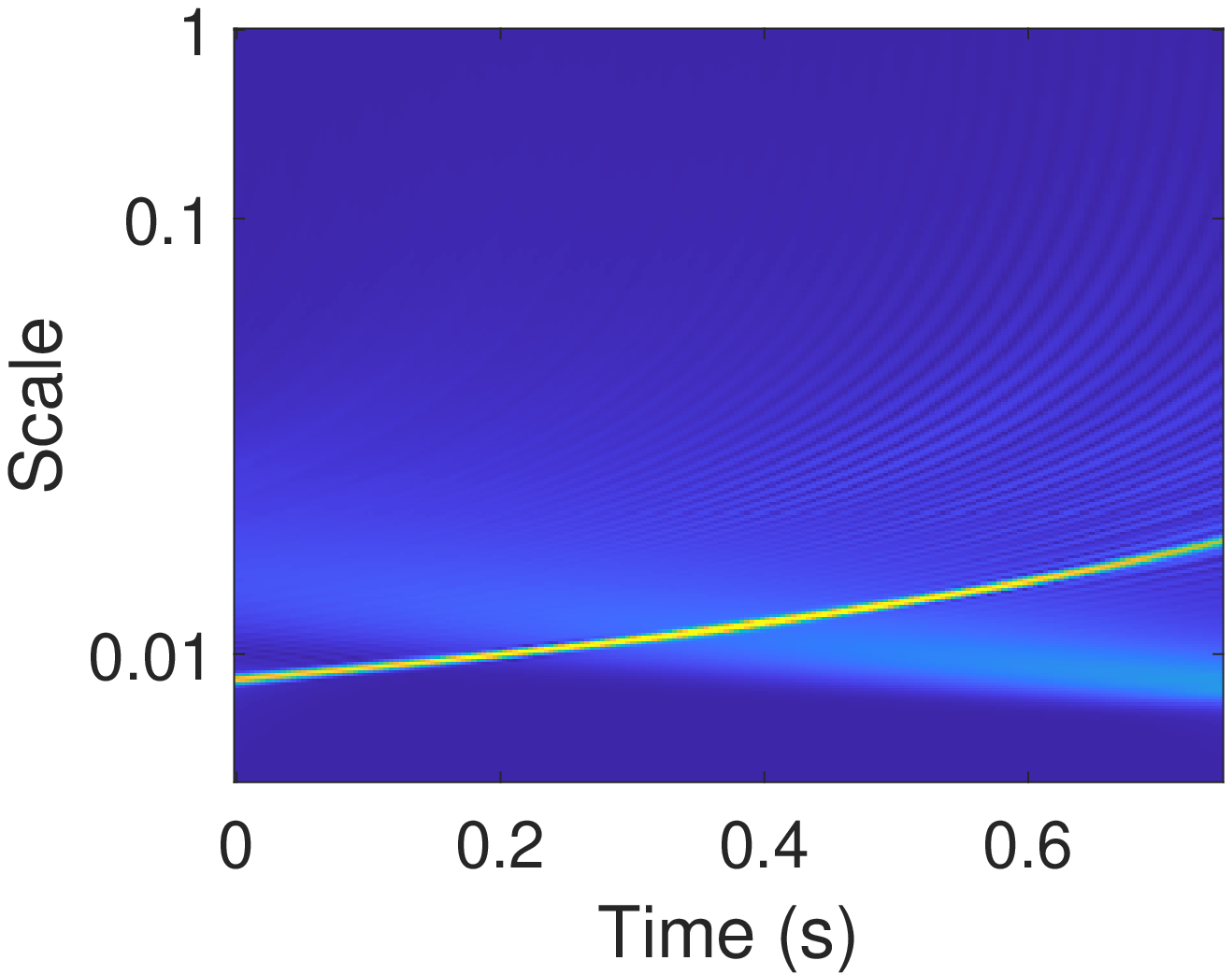}}\\ 
		\resizebox{2.2in}{1.65in}{\includegraphics{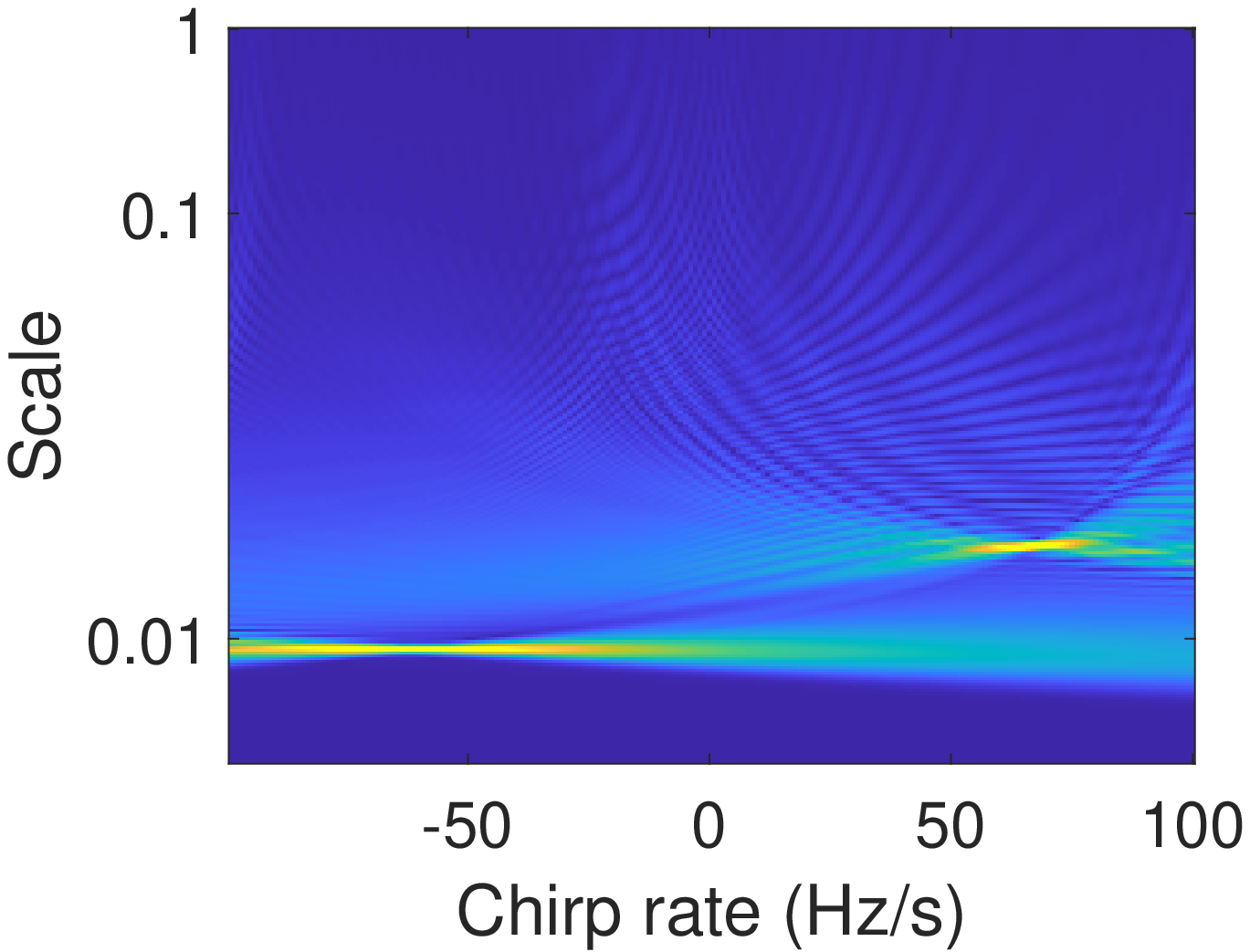}} \quad & 
		\resizebox{2.2in}{1.65in}{\includegraphics{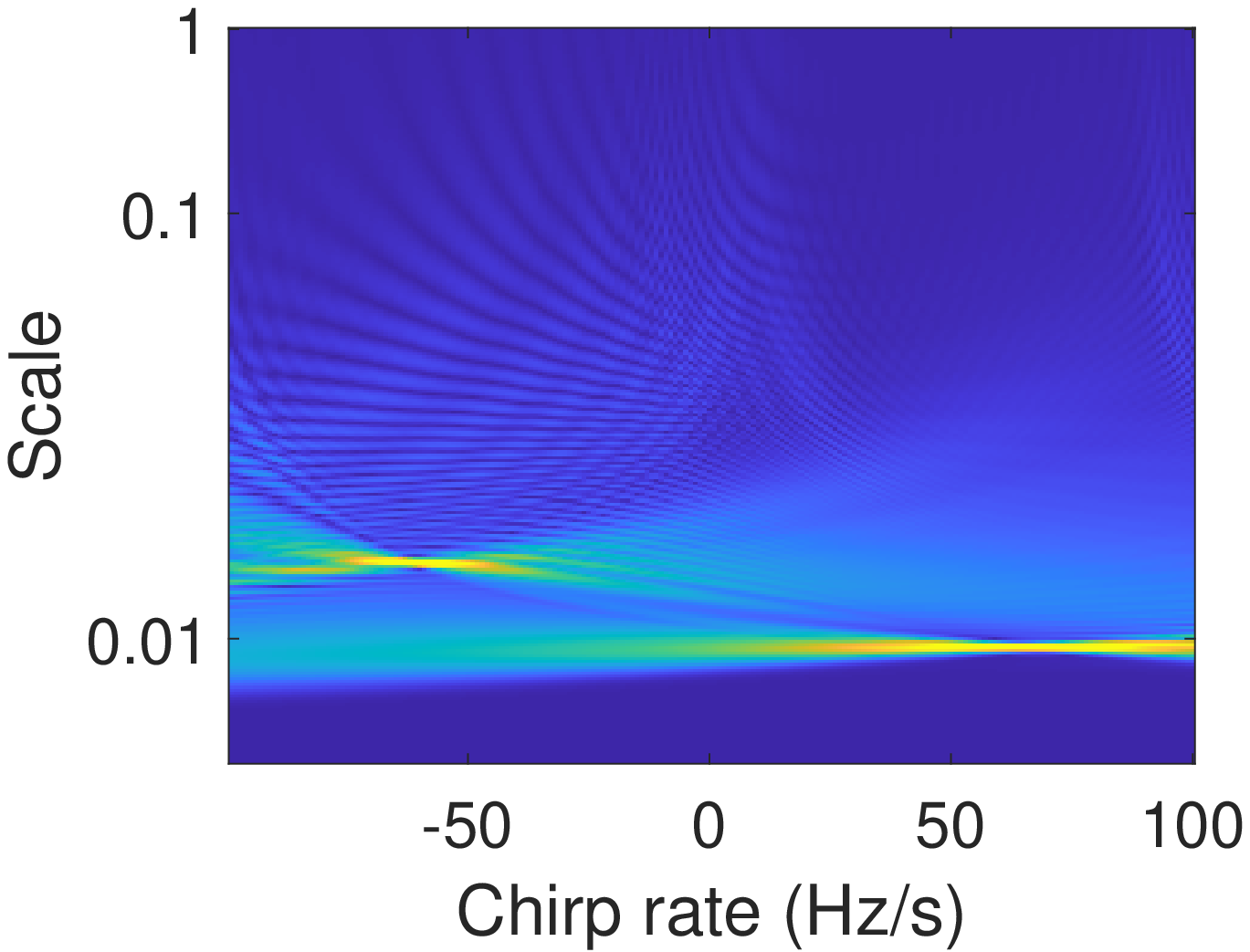}}\\
		\resizebox{2.2in}{1.65in}{\includegraphics{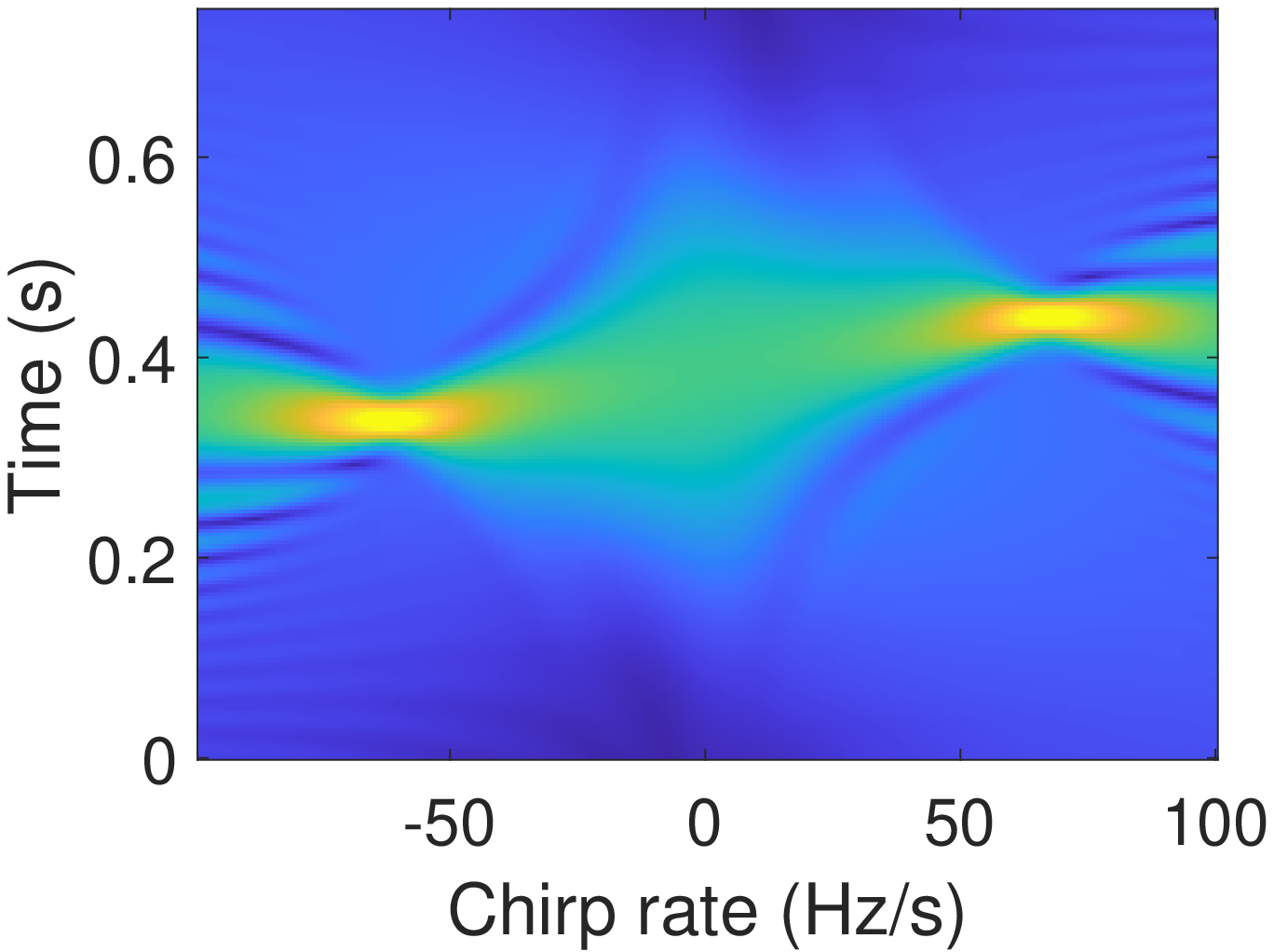}}\quad & 
		\resizebox{2.2in}{1.65in}{\includegraphics{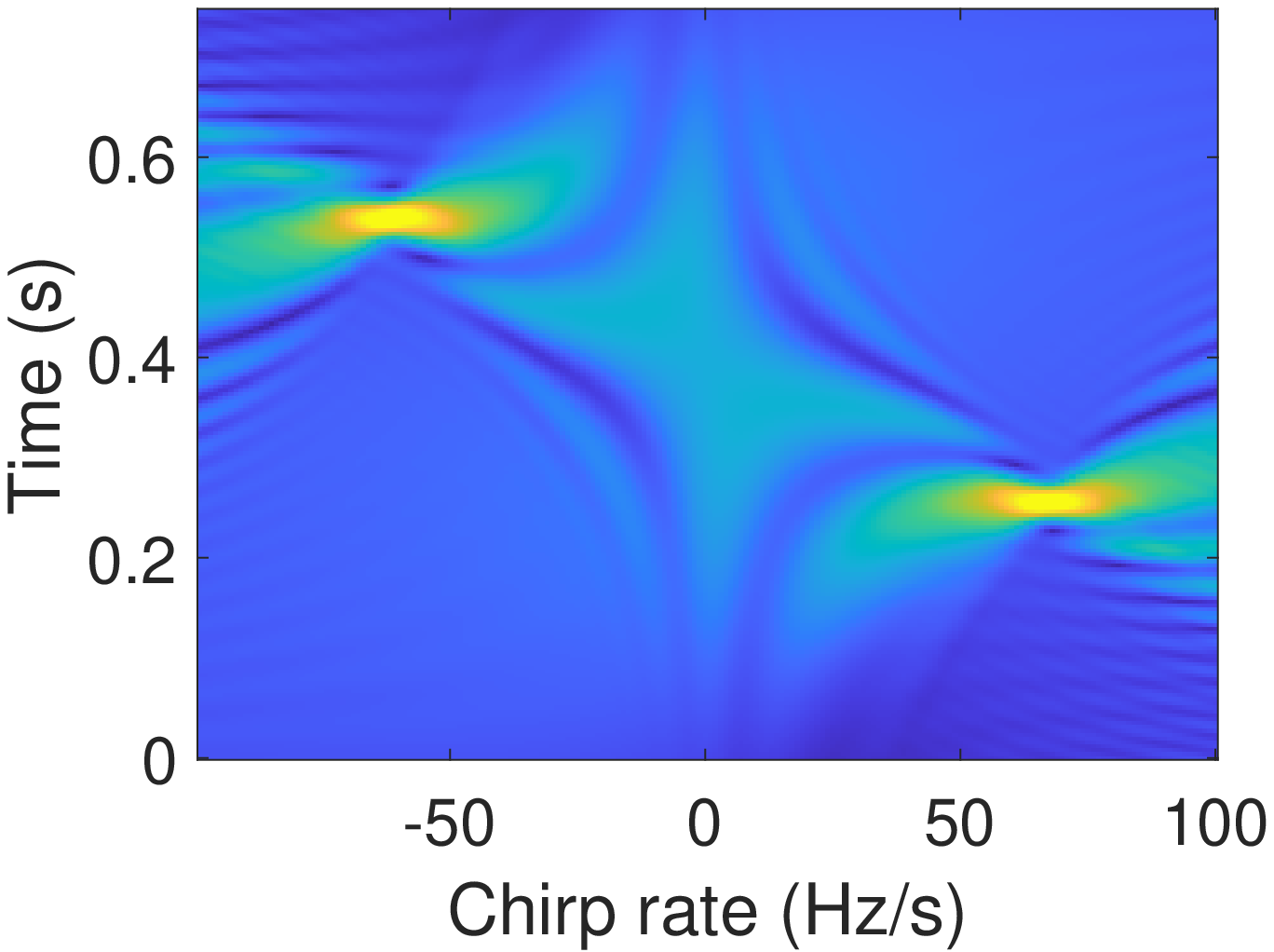}}	
	\end{tabular}
	\caption{\small Some slices of adaptive TSC-R.  
		Top row (from left to right): Two slices of $|U_x(a,b,\lambda)|$ when $\lambda=r_1$, $\lambda=r_2$;
		 Middle row (from left to right): Two slices of $|U_x(a,b,\lambda)|$ when $b=\frac{32}{256}$ and $b= \frac{160}{256}$; 
		  Bottom row (from left to right): Two slices of $|U_x(a,b,\lambda)|$ when  $a = 2^{\frac{51}{32} }/256$  and $a = \frac{1}{64}$.}
	\label{fig:two_chirp_QP_CWT}
\end{figure}

Next let us look at our method. Due to that the adaptive TSC-R $U_x(a,b,\lambda)$ is 3-dimensional, here we show some slices of $|U_x(a,b,\lambda)|$. First we look at the slice when $\lambda=r_1$, the ground truth chirp rate of $x_1$. The top-left panel of  Fig.\ref{fig:two_chirp_QP_CWT} is $|U_x(a,b,r_1)|$.  The clear and sharp scale-time ridge shown in this panel is exactly the curve $(b, \frac{\mu}{\phi^\gp_1(b)})$, which gives a  precise estimate of $\phi^\gp_1(b)$, the IF of $x_1(t)$. The top-right panel is $|U_x(a,b,r_2)|$, where the clear and sharp scale-time curve corresponds to $(b, \frac{\mu}{\phi^\gp_2(b)})$. These two pictures tell us that in two scale-time planes (sub-spaces of $\R^3$) $(b, a, r_1)$ and $(b, a, r_2), b, a\in \R, a>0$, 
there do exist two clear and sharp scale-time ridges which are desired to estimate $\phi^\gp_1(b)$  and $\phi^\gp_2(b)$. Note that these two scale-time planes are well-separated in the 3-dimensional space $\R^3$ since the distance between them is $r_1-r_2=128$, which is large. Thus the estimated chirp rates $\wh \gl_1(b)$ and $\wh \gl_2(b)$ should be easily obtained. In addition, if they are close to $r_1$ and $r_2$ respectively, then we will have accurate estimates for  $\phi^\gp_1(b)$  and $\phi^\gp_2(b)$. Here we use the same parameters as those used in Fig.\ref{fig:two_chirp_signal}, especially, $\sigma$ is constant, namely $\gs=0.023$.

As we see from our theorems that for a given multicomponent signal,  the key for the success of our method to recover its modes is: (i) For each $b$, can we obtain $\wh a_\ell(b)$ and $\wh \gl_\ell(b)$? and (ii) if yes for Question (i), then whether $\wh a_\ell(b)$ and $\wh \gl_\ell(b)$  are close to $\mu/\phi^\gp_\ell(b)$ and $\phi^{\gp\gp}_\ell(b)$? The answer to Question (ii) is guaranteed by the error bounds in our theorems. So the most important step is whether 
we can obtain $\wh a_\ell(b)$ and $\wh \gl_\ell(b)$. For this example of the two-component signal, the question is 
for each $b$,  
two peaks of  the function $h(a, \gl):=|U_x(a,b,\gl)|$ with $a\in (0, \infty), \gl\in \R$ 
are far apart enough from each other so that we can easily obtain the (local) maximum points $(\wh a_1, \wh \gl_1)$ and $(\wh a_2, \wh \gl_2)$ in the scale-(chirp rate) plane?  As examples, in the middle-left panel of Fig.\ref{fig:two_chirp_QP_CWT}, we show $h(a, \gl)$ with $b=32/256$; while  $h(a, \gl)$ with $b=160/256$ is presented in the  
middle-right panel of Fig.\ref{fig:two_chirp_QP_CWT}.  From these two panels, we observe that for either $b=32/256$ or $b=160/256$, two peaks of  $h(a, \gl)$ do be far apart and hence we should easily obtain  $(\wh a_1, \wh \gl_1)$ and $(\wh a_2, \wh \gl_2)$. Also observe from these two panels, the scale coordinates $\wh a_1, \wh a_2$ of the $(\wh a_1, \wh \gl_1)$ and $(\wh a_2, \wh \gl_2)$ change for different $b=32/256$ or $b=160/256$; while chirp rate coordinates $\wh \gl_1, \wh \gl_2$ 
essentially stay the same  (around $70$ and $-60$ respectively). This is due to the fact that $\phi^\gp_1(b)$ and $\phi^\gp_2(b)$  change with time $b$, while $\phi^{\gp\gp}_1(b)$ and $\phi^{\gp\gp}_2(b)$ are independent of $b$. 
In the bottom row of Fig.\ref{fig:two_chirp_QP_CWT}, we provide slices $|U_x(a,b,\gl)|$ with $a=2^{\frac{51}{32} }/256$ and $a=1/64$. All these pictures demonstrate that the components $x_1$ and $x_2$ are well-separated in the 3-dimensional space of adaptive TSC-R, although their IF curves are crossover.

\begin{figure}[th]
	\centering
	\hspace{-0.7cm}
	\begin{tabular}{c@{\hskip -0.2cm}c@{\hskip -0.2cm}c}		
		\resizebox{2.2in}{1.65in}{\includegraphics{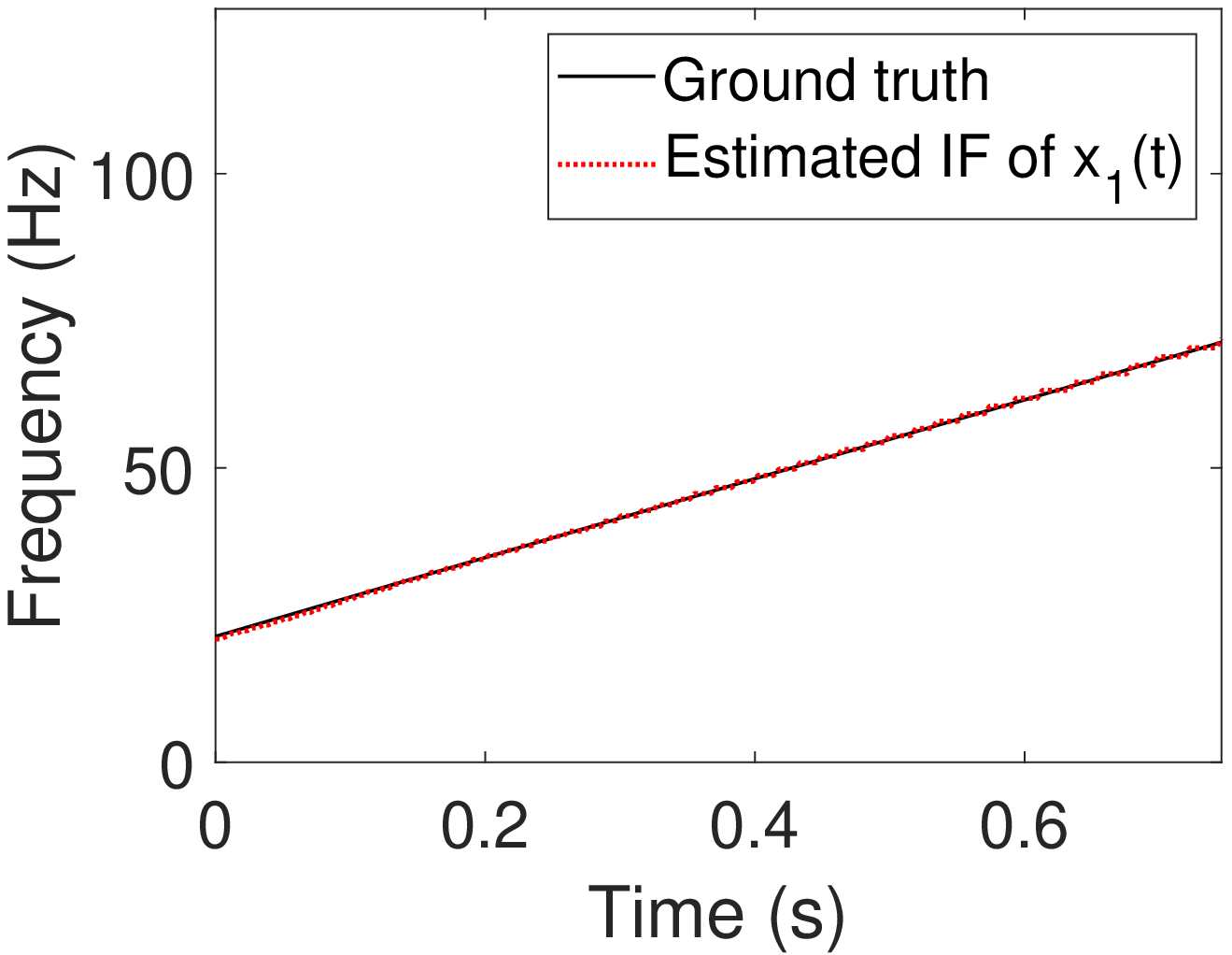}} \quad &
		\resizebox{2.2in}{1.65in}{\includegraphics{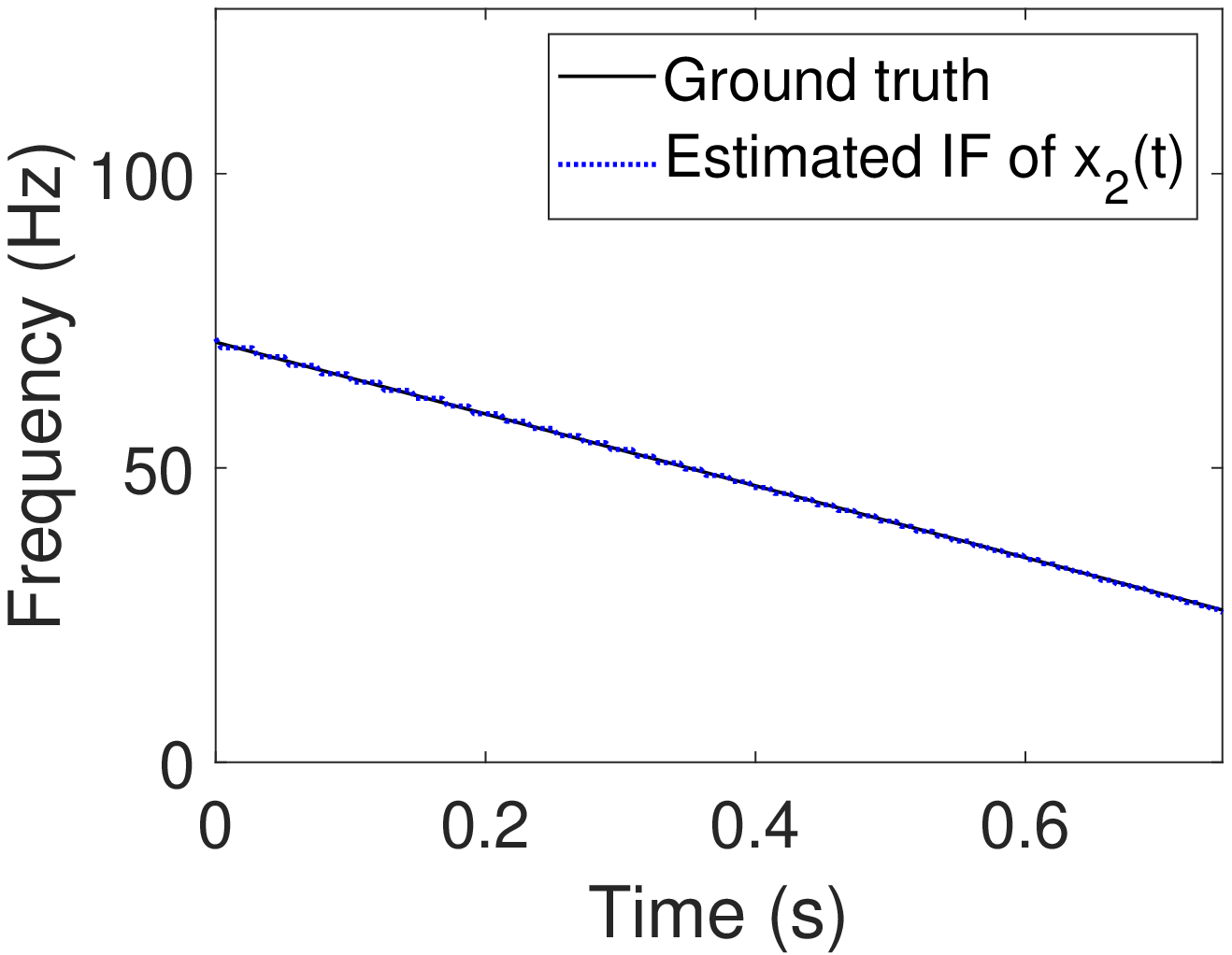}} \quad &
		\resizebox{2.2in}{1.65in}{\includegraphics{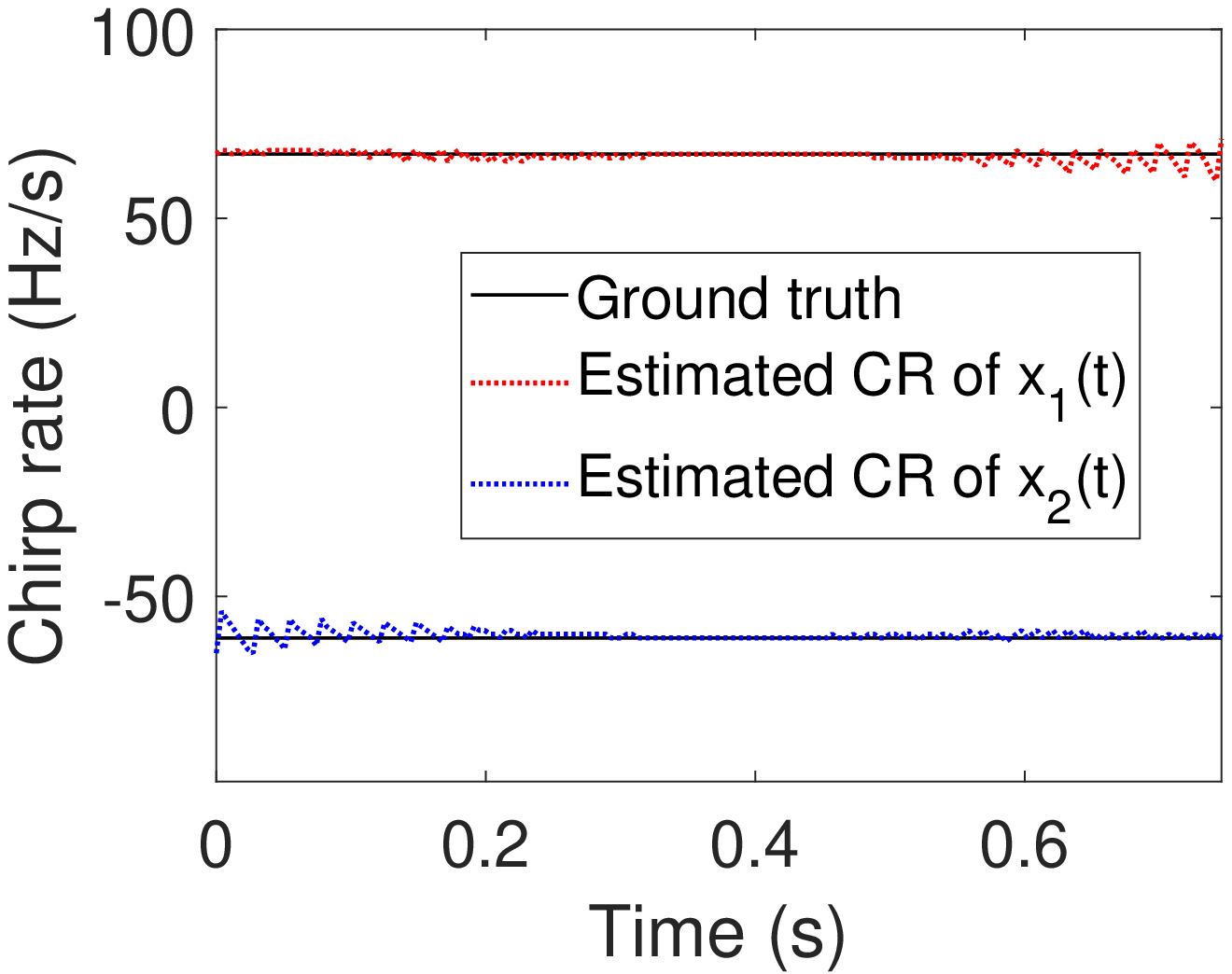}}		
	\end{tabular}
	\caption{\small 
		Estimated IF of $x_1$ (left panel),	estimated IF of $x_2$ (middle panel) and estimated chirp rates (right panel) of two components by our method TSC-R.
	}
	\label{fig:two_chirp_QP_CWT_IF}
\end{figure}

 Fig.\ref{fig:two_chirp_QP_CWT_IF} shows the estimated IFs $\frac 1{\wh a_1(b)}, \frac1{\wh a_2(b)}$  and estimated chirp rates  $\wh \gl_1(b)$ and $\wh \gl_2(b)$.  Observe the estimated IFs are very close to ground truth $\phi^{\gp}_1(b)$ and $\phi^{\gp}_2(b)$.  The estimation errors of IFs and chirp rates are mainly caused by the bound effect, which can also be improved with a time-varying $\gs(b)$.    
\hfill $\blacksquare$

\begin{figure}[th]
	\centering
	\hspace{-0.7cm}
	\begin{tabular}{c@{\hskip -0.2cm}c @{\hskip -0.2cm}c}
		\resizebox{2.2in}{1.65in}{\includegraphics{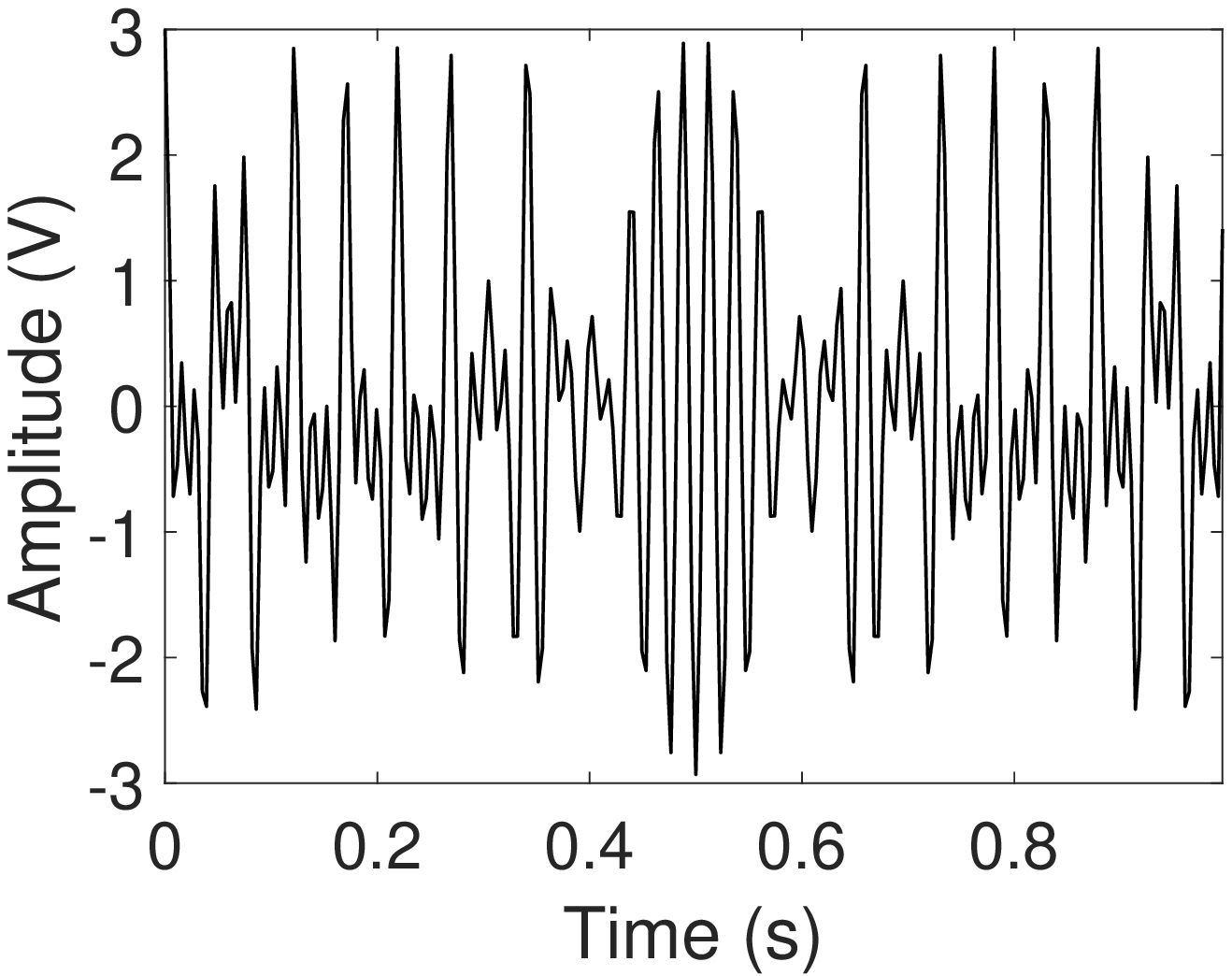}} \quad &
		\resizebox{2.2in}{1.65in}{\includegraphics{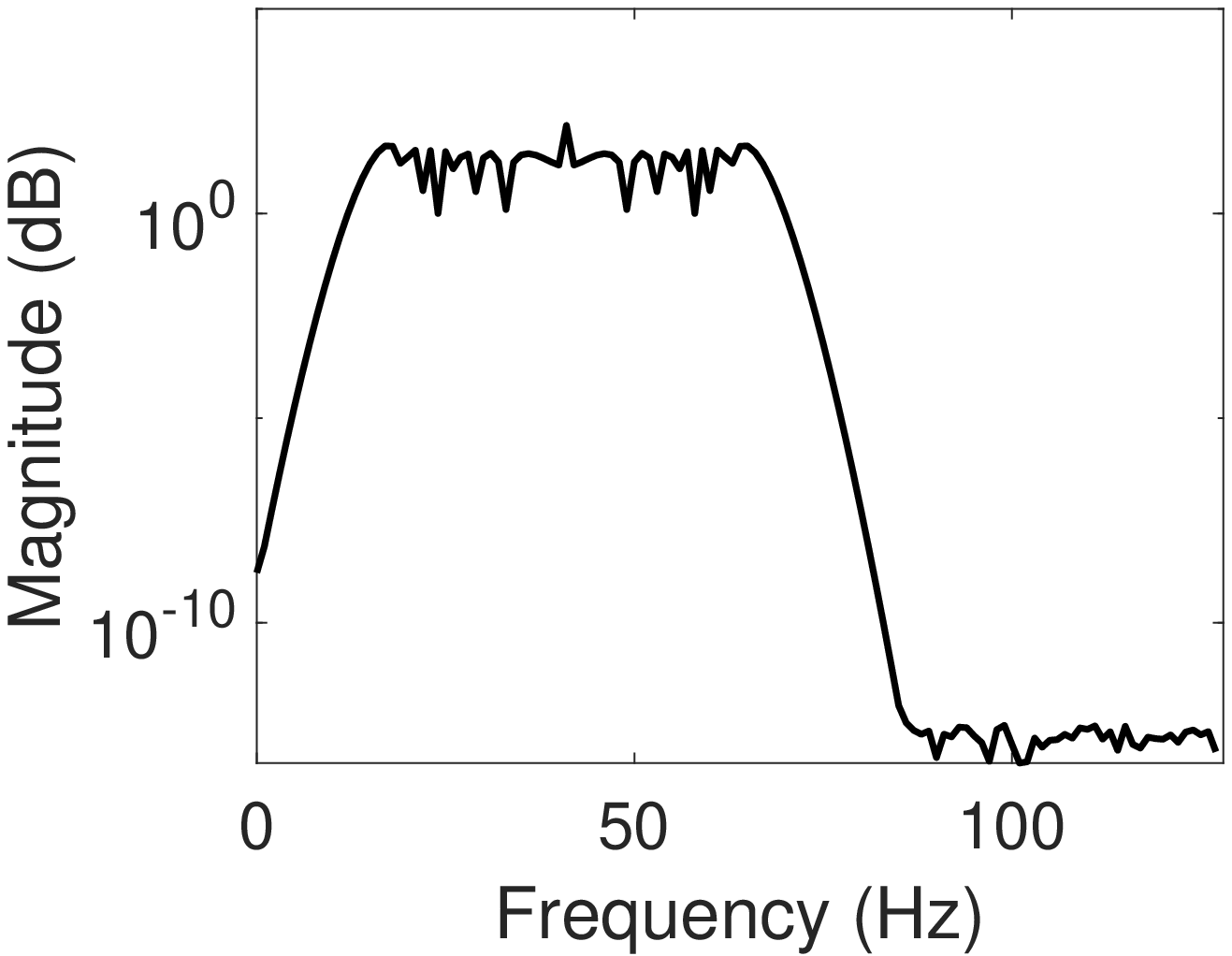}}\quad & 
		\resizebox{2.2in}{1.65in}{\includegraphics{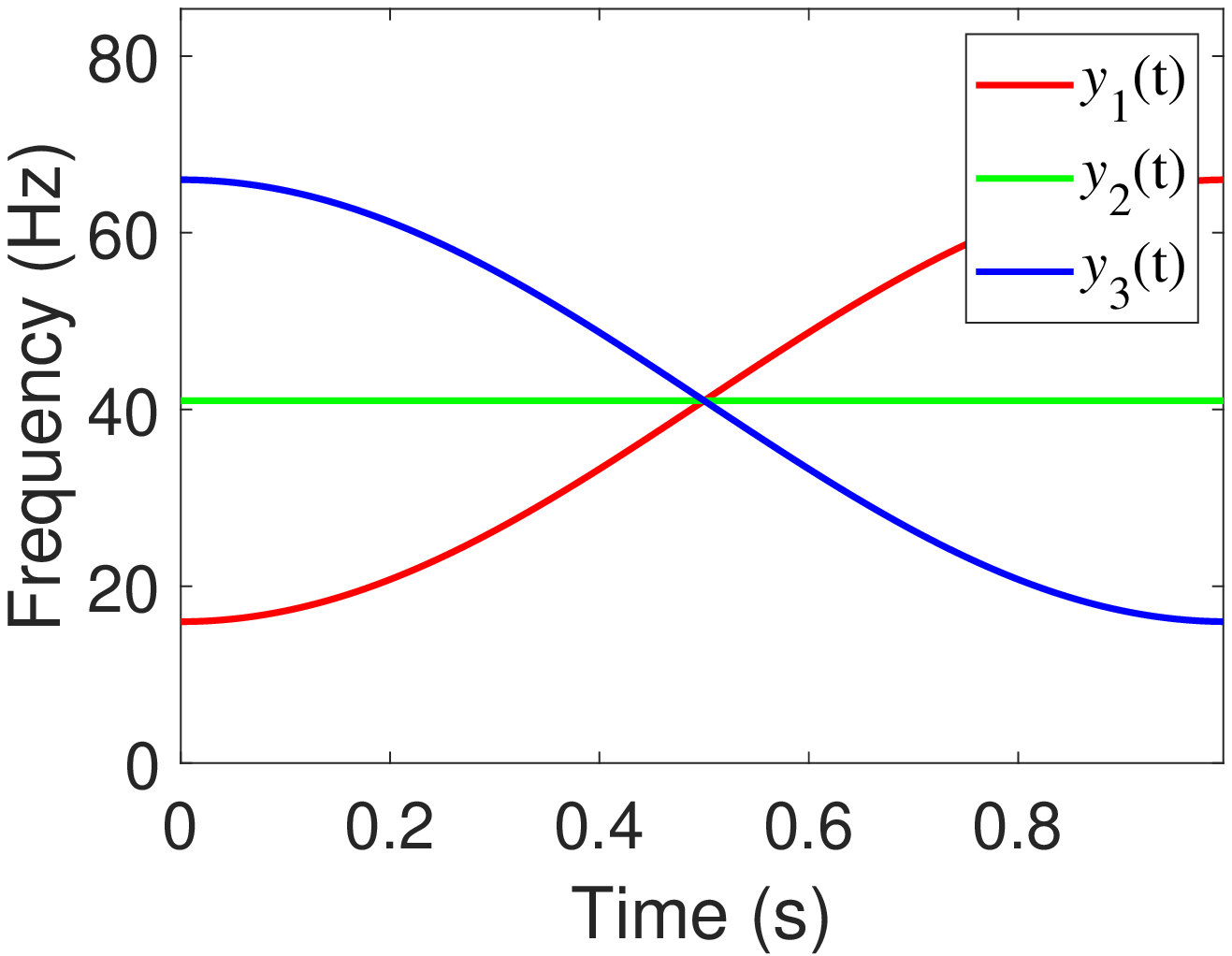}}\\ 
		\resizebox{2.2in}{1.65in}{\includegraphics{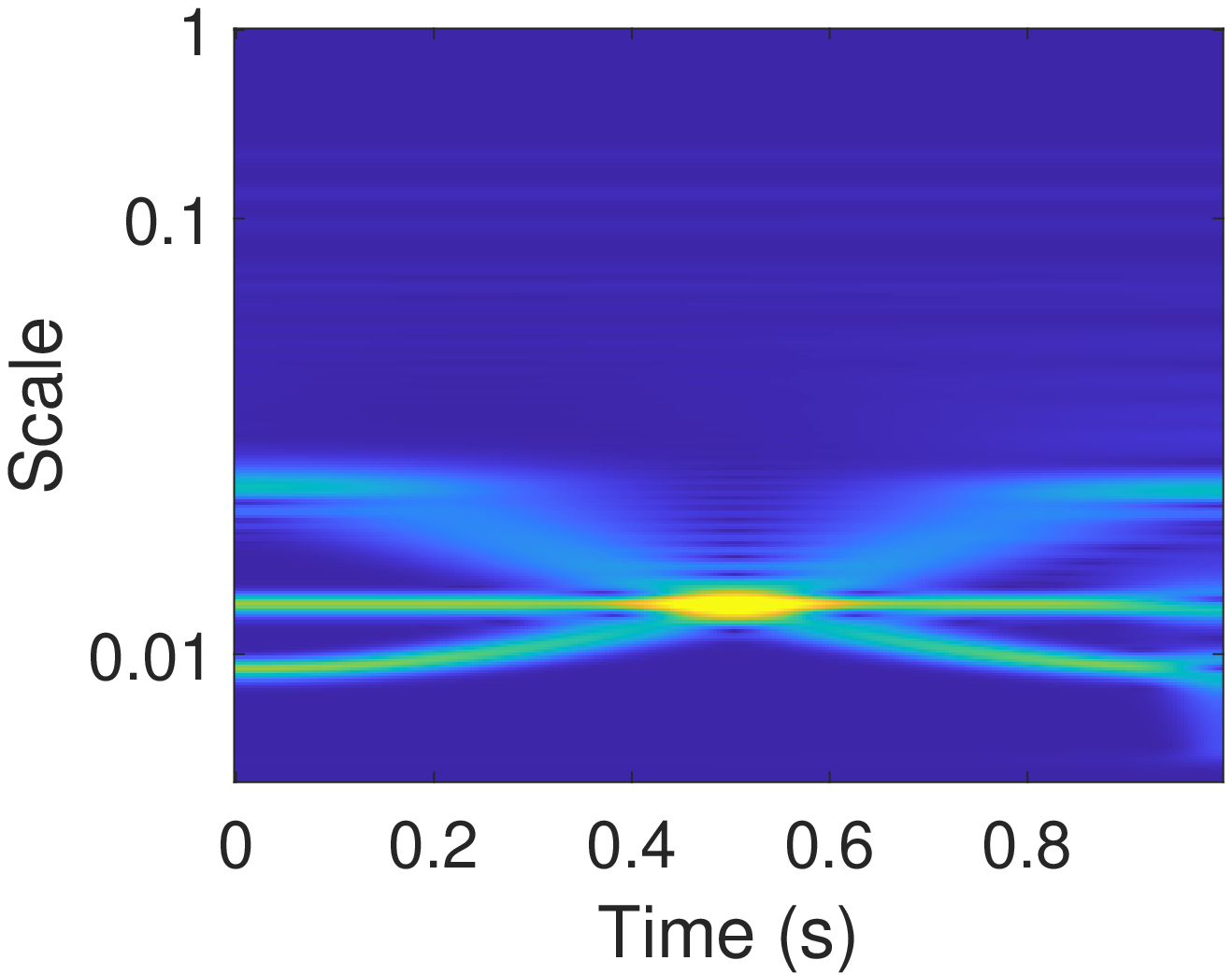}}\quad & 
		\resizebox{2.2in}{1.65in}{\includegraphics{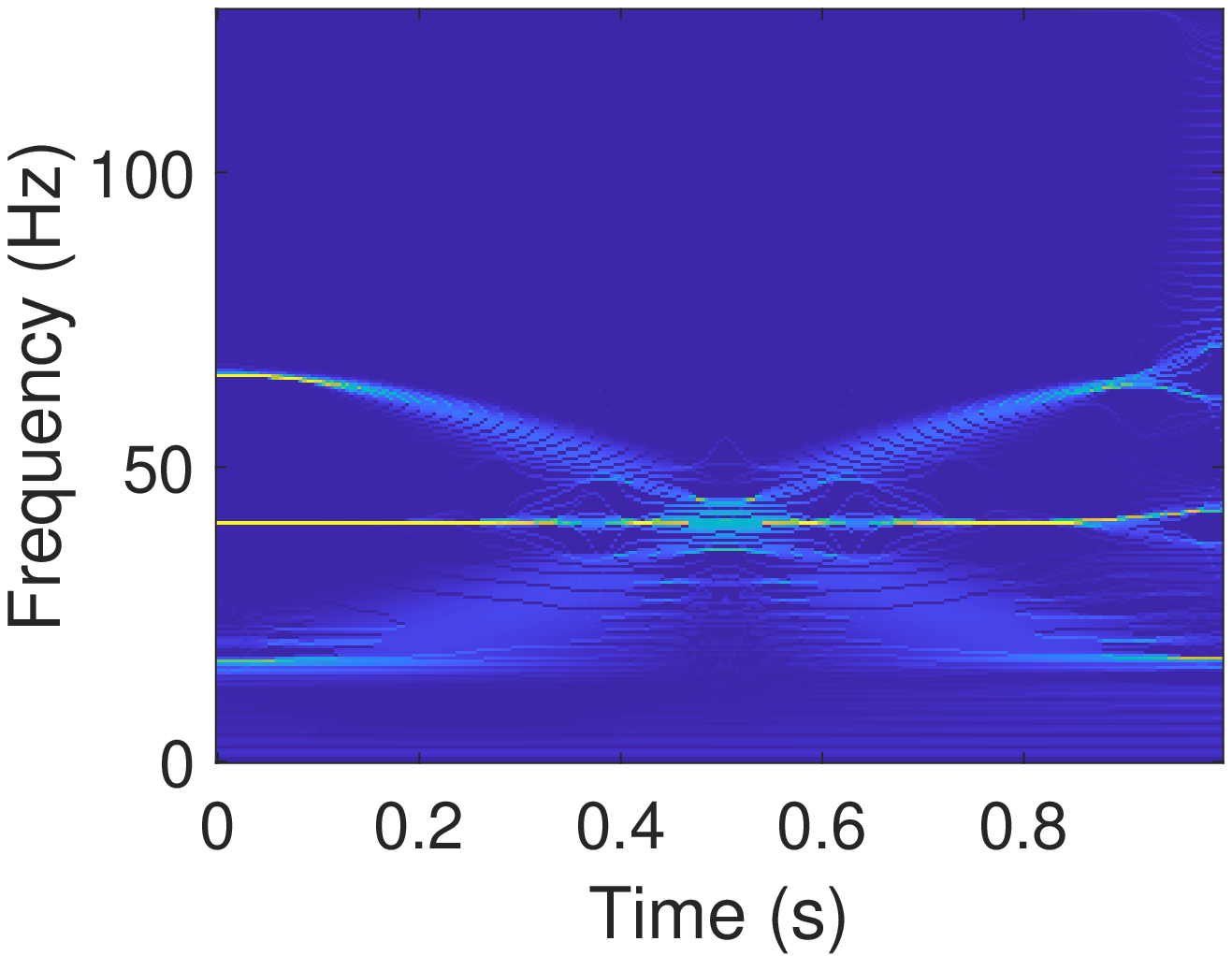}} \quad & 
		\resizebox{2.2in}{1.65in}{\includegraphics{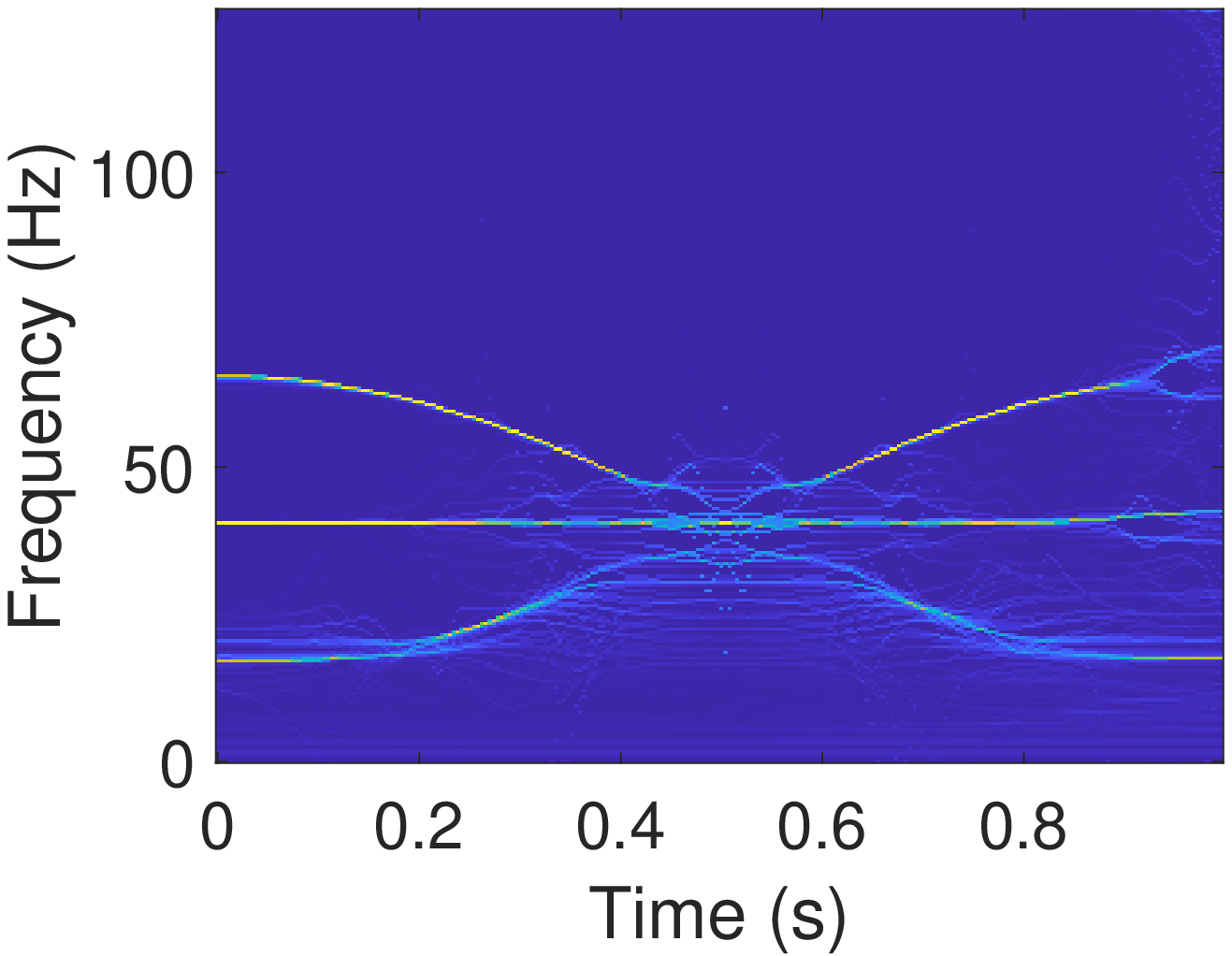}}
	\end{tabular}
	\caption{\small Three-component signal $y(t)$ given in \eqref{three_comp} and its time-frequency representations with SST. 
	 Top row (from left to right):  Waveform of $y(t)$, magnitude spectrum and ground truth IFs of $y_1(t)$, $y_2(t)$ and $y_3(t)$; Bottom row (from left to right): CWLT of $y(t)$,  CWT-based SST and CWT-based second-order SST.}
	\label{fig:three_comp_signal}
\end{figure}

\begin{example}
{\rm Let $y(t)$ be a truncation of the synthetic micro-Doppler signal in Fig.1, given as,
\begin{eqnarray} 
\nonumber
y(t)\hskip -0.6cm && =y_1(t)+y_2(t)+y_3(t)\\
\label{three_comp} &&=
\cos \left(82\pi t + 50 \cos \big (\pi t + \frac{\pi}{2} \big ) \right) + \cos (82 \pi t) 
+ \cos \left(82\pi t + 50 \cos \big (\pi t - \frac{\pi}{2} \big ) \right),
\end{eqnarray}
where $t\in[0,1)$. }
\end{example}

In the following experiment, $y(t)$ is discretized with the sampling rate {\rm 256Hz}, namely $t=0, \frac{1}{256}, \dots, \frac{255}{256}$. 
The IFs of $y_1$, $y_2$ and  $y_3$ are $\phi_1'(t) = 41- 25\sin(\pi t + \frac{\pi}{2})$, $\phi_2'(t) = 41$ and $\phi_3'(t) = 41- 25\sin(\pi t - \frac{\pi}{2})$, respectively. See the top-right panel in Fig.\ref{fig:three_comp_signal} for IFs.  The bottom row of Fig.\ref{fig:three_comp_signal} shows CWLT, SST and the second-order SST, where parameter $\sigma=0.023$ and the number of voice $n_v=32$.
Observed that CWLT and SST  are hardly to represent any of these three sub-signals separated and reliably. Thus they cannot separate sub-signals. Actually, as far as we known, there is no efficient algorithm available to recover the three components with the 256 points observation of $y(t)$ above. 
\begin{figure}[th]
	\centering
	\hspace{-0.7cm}
	\begin{tabular}{c@{\hskip -0.2cm}c}
		\resizebox{2.4in}{1.8in}{\includegraphics{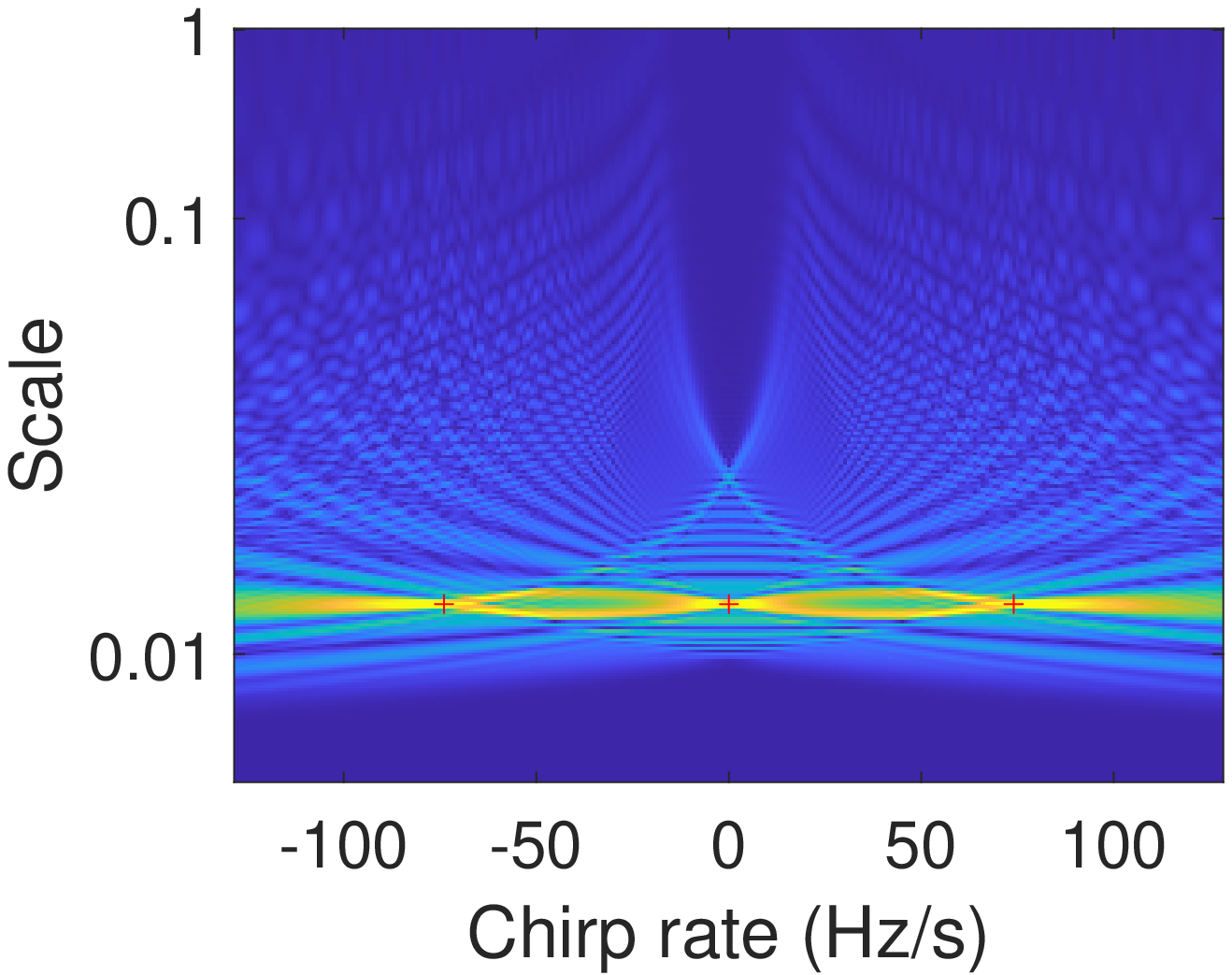}} \quad &
		\resizebox{2.4in}{1.8in}{\includegraphics{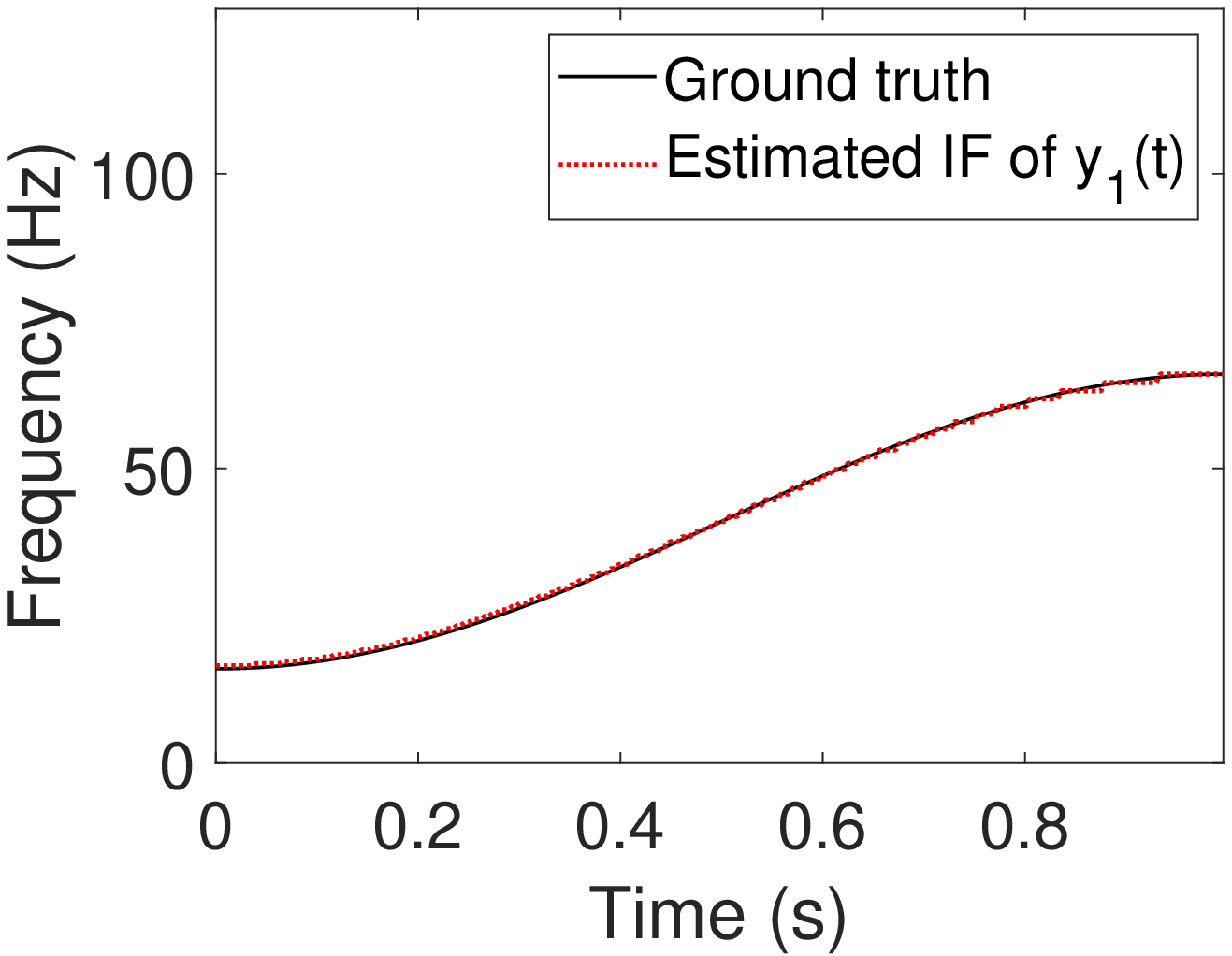}}\\ 
		\resizebox{2.4in}{1.8in}{\includegraphics{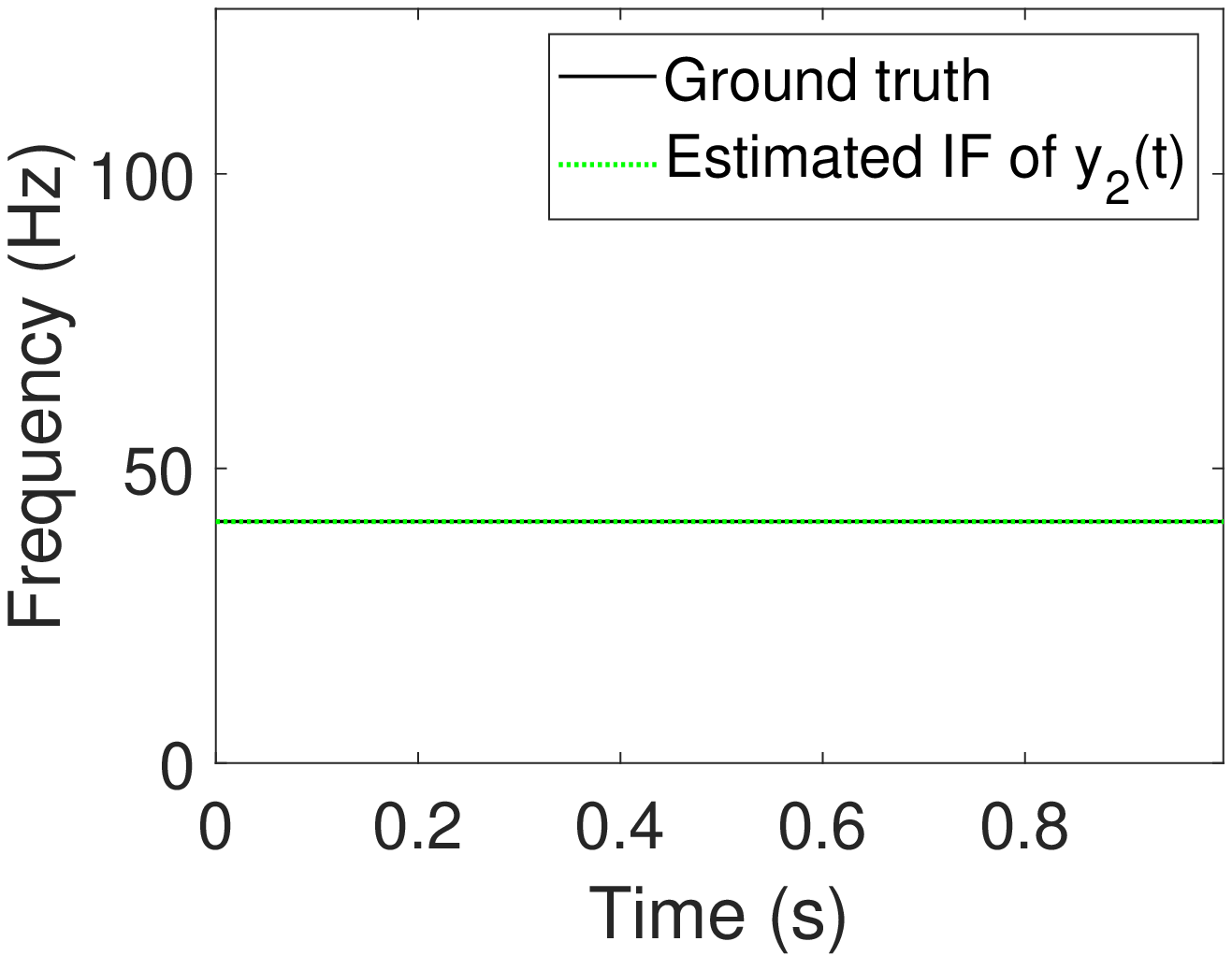}} \quad &
		\resizebox{2.4in}{1.8in}{\includegraphics{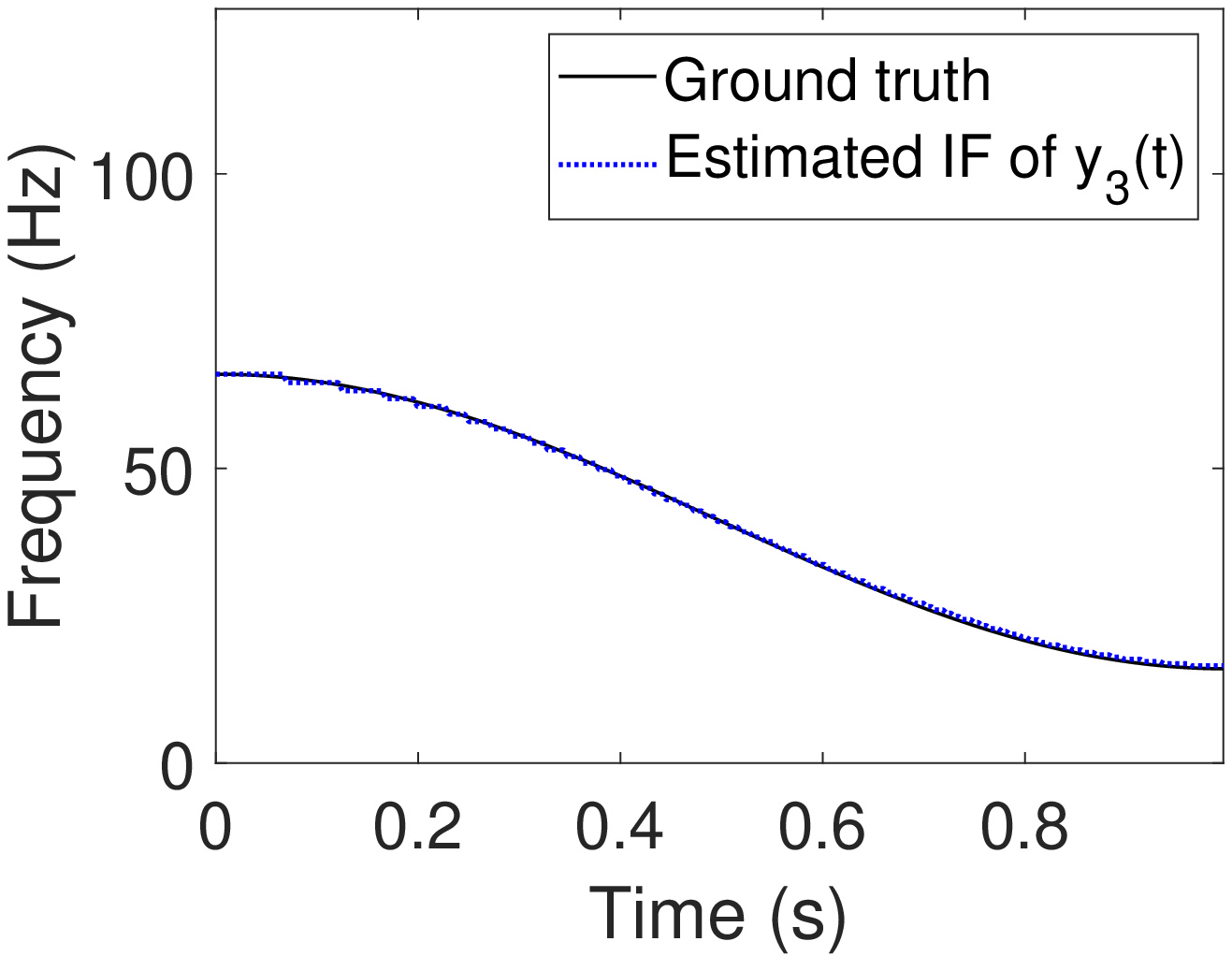}}\\ 
	\end{tabular}
	\caption{\small Slice of adaptive TSC-R $|U_y(a,b,\lambda)|$ of $y(t)$ when $b=0.5$ (Top-left panel) and estimated IF (dotted lines) of three components (Top-right and bottom panels).}
	\label{fig:three_comp_IFs_estimation}
\end{figure}

The top-left panel in Fig.\ref{fig:three_comp_IFs_estimation} shows the slice of the adaptive TSC-R $|U_y(a,b,\lambda)|$ of $y(b)$ when $b=0.5$, namely the specific time point when the IFs of the three components are crossover. Observe 
even at this particular time $b=0.5$, the three peaks of $|U_y(a,0.5,\lambda)|$ (marked by $+$) are far apart in the scale-(chirp rate) plane. Thus we can easily obtain  $(\wh a_1, \wh \gl_1)$, $(\wh a_2, \wh \gl_2)$ and $(\wh a_3, \wh \gl_3)$ for this $b$. Actually 
for other $b$, three peaks of $|U_y(a, b,\lambda)|$ in  the scale-(chirp rate) plane are also far apart, and  much clearer and sharper than the case here when $b=0.5$. Hence,  these three components are well-separated in the 3-dimensional space of $(a, b, \gl)$.  
The estimated  IF of each component is given in the other panels of Fig.\ref{fig:three_comp_IFs_estimation}. The result shows our method is able to estimate the IF of each component correctly and precisely.  
We provide the result of component recovery in Fig.\ref{fig:three_comp_signal_recovery}, which shows that each recovered waveform  is very close to the corresponding mode, except for near time $b=0.5$, where the IFs of three components are crossover.     
The results show the validity and correctness of the proposed method.
Here we use a time-varying parameter $\gs(b)$, which improves IF estimation and mode recovery performance a lot. How to select $\gs(b)$ will be addressed in our future work. 
\hfill $\blacksquare$

\begin{figure}[th]
	\centering
	\hspace{-0.7cm}
	\begin{tabular}{c@{\hskip -0.2cm}c @{\hskip -0.2cm}c}
		\resizebox{2.2in}{1.65in}{\includegraphics{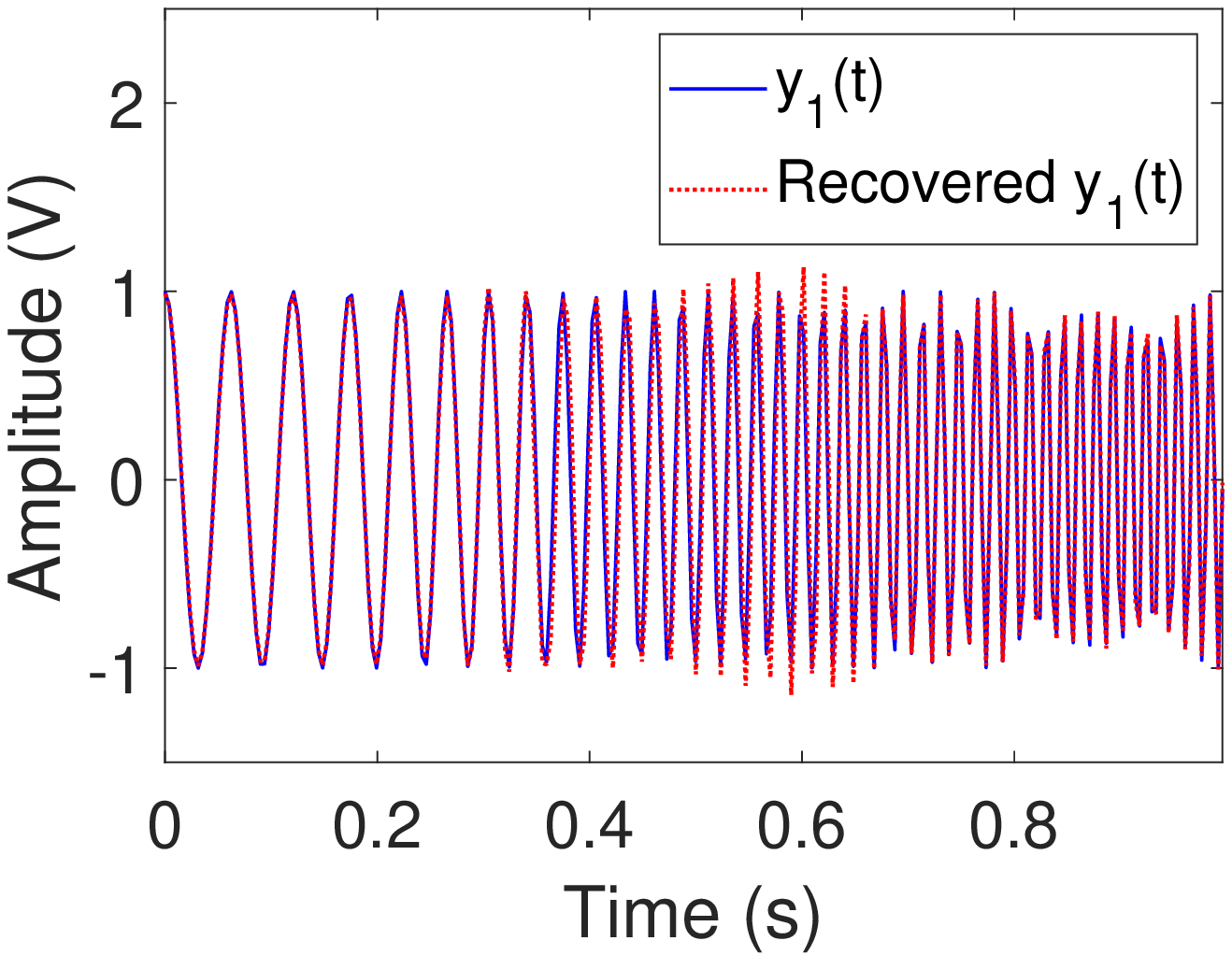}}\quad & 
		\resizebox{2.2in}{1.65in}{\includegraphics{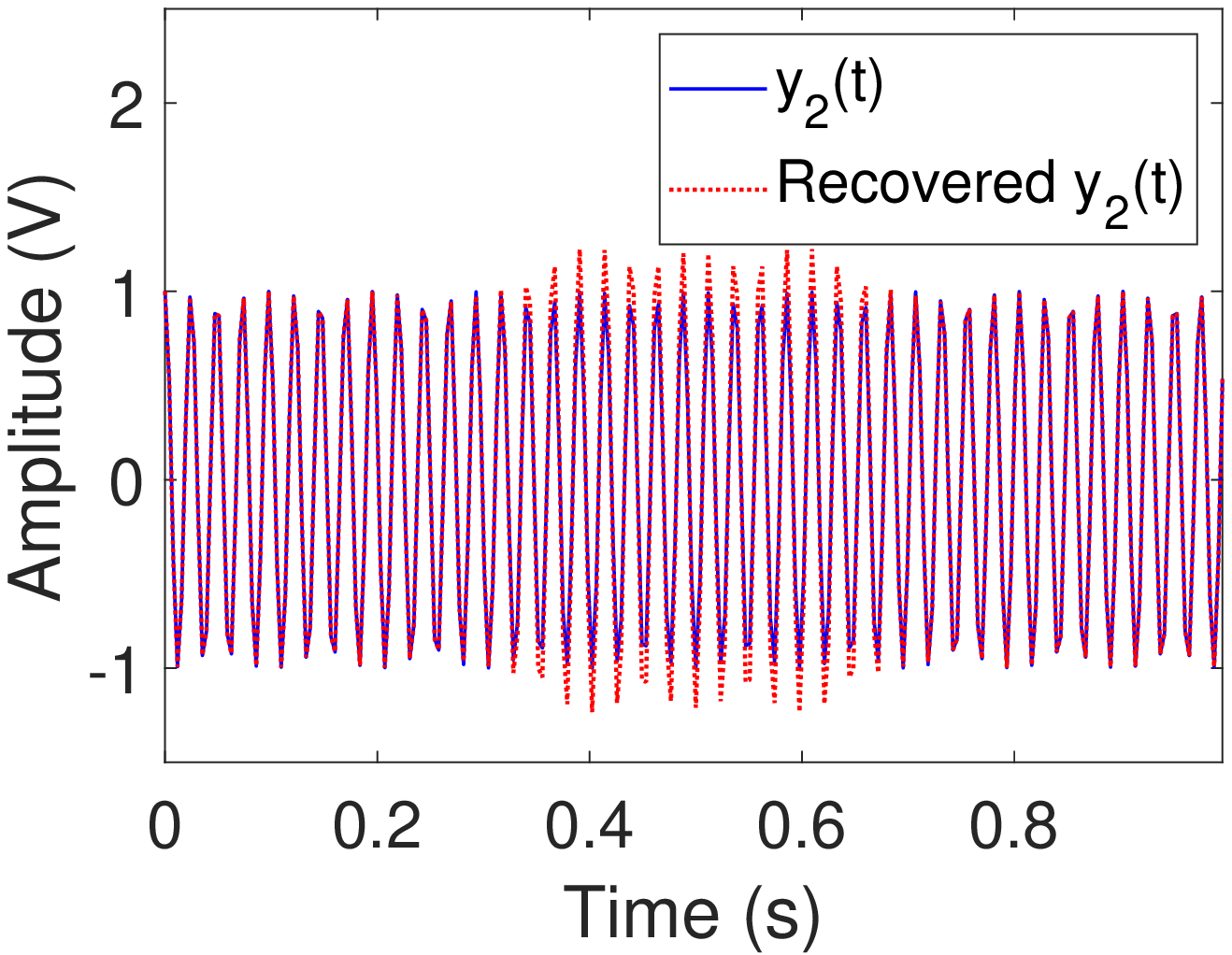}} \quad & 
		\resizebox{2.2in}{1.65in}{\includegraphics{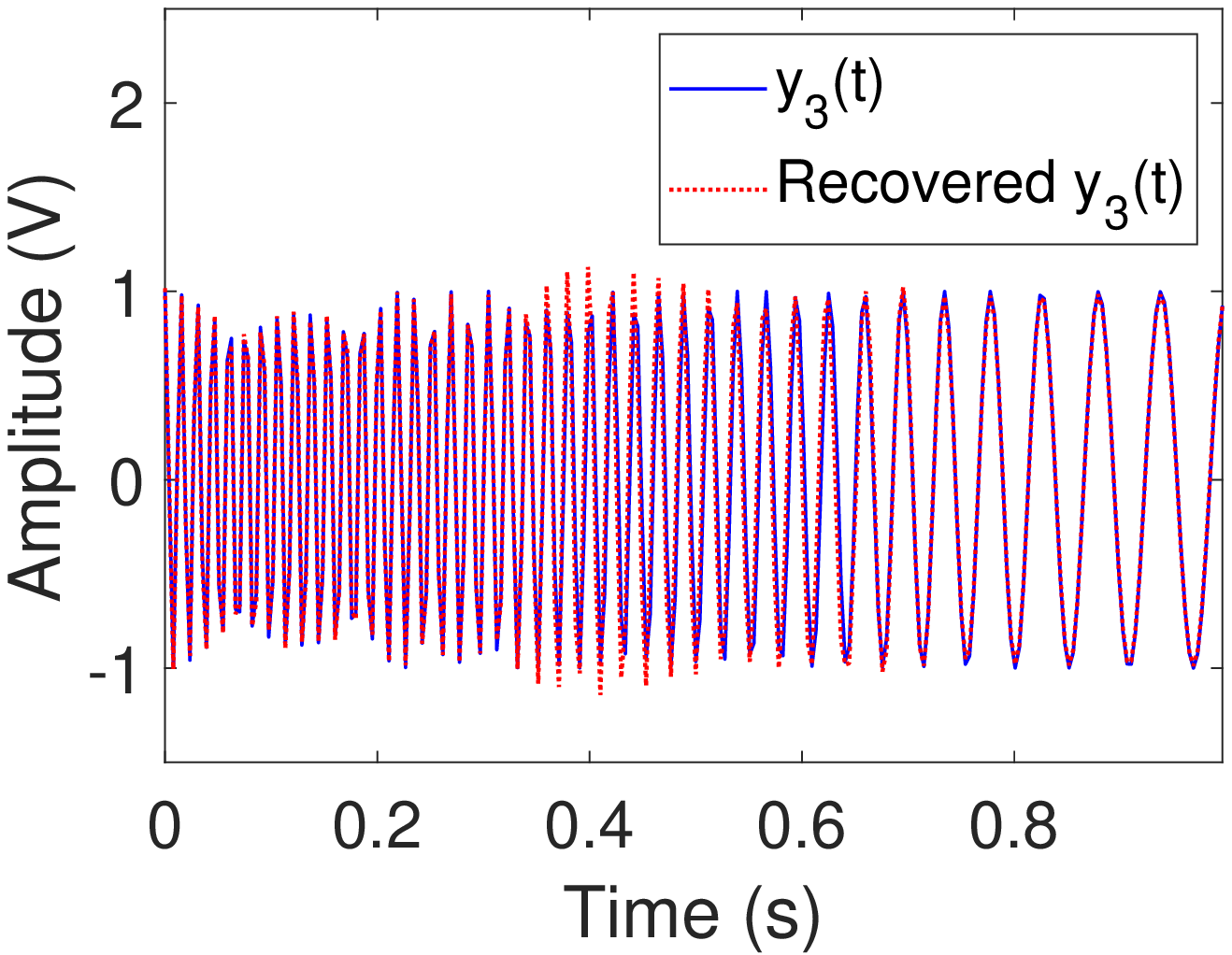}}
	\end{tabular}
	\caption{\small Recovered sub-signals (dotted red lines) of $y_1(t)$, $y_2(t)$ and $y_3(t)$ respectively (from left to right).}
	\label{fig:three_comp_signal_recovery}
\end{figure}

\renewcommand{\arraystretch}{1.5} 
\begin{table}[tp]  
	
	\centering  
	\fontsize{10}{8}\selectfont  
	\begin{threeparttable}  
		\caption{IF estimate and component recovery errors under white Gaussian noise with different SNRs.}  
		\label{tab:error_SNR}  
		\begin{tabular}{|c|ccc|ccc|}  
			\toprule  
			\multirow{2}{*}{SNR}&  
			\multicolumn{3}{c|}{IF estimate errors}&\multicolumn{3}{c|}{Mode recovery errors}\cr  
			\cmidrule(lr){2-4} \cmidrule(lr){5-7}  
			&$\phi_1'(t)$&$\phi_2'(t)$&$\phi_3'(t)$&$y_1(t)$&$y_2(t)$&$y_3(t)$\cr  
			\midrule
			0 dB&0.1335&0.1361&0.0765&0.7830&0.6457&0.6037\cr    
			5 dB&0.0281&0.0092&0.0312&0.3183&0.2311&0.3390\cr  
			10 dB&0.0117&0.0077&0.0162&0.2056&0.1746&0.2506\cr  
			15 dB&0.0112&0.0005&0.0109&0.1926&0.1241&0.2137\cr  
			20 dB&0.0099&0.0005&0.0407&0.1927&0.1291&0.2053\cr  
			
			\bottomrule  
		\end{tabular}  
	\end{threeparttable}  
\end{table}

Finally, let us consider the effect of our computational scheme for signal data with additive noise, 
by adding a noise $n(t)$ process to signal $y(t)$ to have a synthetic signal $z(t)$ contaminated by noise $n(t)$:
$$z(t)=y(t)+n(t).$$
Here we let $n(t)$ be a zero-mean Gaussian noise. Table 1 shows the IF estimate and mode recovery errors  with different signal-to-noise ratios (SNRs), where the SNR is defined by 
$10\log_{10} \frac{||y||_2}{||n||_2} $. The errors in Table 1 are defined by
$$
E_f = \frac{||f-\wt f||_2}{||f||_2},
$$
where $\wt f$ is the estimation of $f$.
$E_f$ is also called the normalized mean square error. 
Observe that for SNR$ \ge 10$, IF estimation is stable and mode recovery errors are reasonable small.


\end{document}